\documentclass[12pt]{amsart}

\usepackage[T1]{fontenc}
\usepackage[utf8]{inputenc}
\usepackage{amssymb,amsmath,mathtools}
\usepackage{booktabs}
\usepackage{tikz}
\usepackage{comment}
\usepackage[hidelinks]{hyperref}

\makeatletter
\@namedef{subjclassname@2020}{%
	\textup{2020} Mathematics Subject Classification}
\makeatother

\newtheorem{theorem}{Theorem}
\newtheorem{lemma}{Lemma}
\newtheorem{proposition}{Proposition}
\newtheorem{corollary}{Corollary}

\newtheorem{remark}{Remark}

\begin{document}

\title[The exact $L_1$-norm supremum of Walsh--Kaczmarz--Fej\'er kernels]
{The exact $L_1$-norm supremum of Walsh--Kaczmarz--Fej\'er kernels}

\author[I. Blahota]{Istv\'an Blahota}

\address{Institute of Mathematics and Computer Sciences\\
	University of Ny\'\i regyh\'aza\\
	H-4400 Ny\'\i regyh\'aza, P.O. Box 166, Hungary}
\email{blahota.istvan@nye.hu}

\begin{abstract}
	We study the $L_1(G)$-norms of Fej\'er kernels for the
	Walsh--Kaczmarz system.  In the Walsh--Paley case the identity
	\[
	\|K^w_{2^n}\|_1=1 \qquad (n\in\mathbb N)
	\]
	follows directly from the binary structure.  Toledo's sharp result gives the
	global supremum
	\[
	\sup_{n\in\mathbb P}\|K^w_n\|_1=\frac{17}{15}.
	\]
	The Walsh--Kaczmarz kernels behave differently.  We first prove that
	\[
	\sup_{n\in\mathbb N}\|K^\kappa_{2^n}\|_1
	=
	\lim_{n\to\infty}\|K^\kappa_{2^n}\|_1
	=
	\frac43,
	\]
	and we prove that the sequence $(\|K^\kappa_{2^n}\|_1:n\in\mathbb N)$ is
	non-decreasing and is strictly increasing from $n=2$ on.  Then we use this
	result, an exact splitting formula from Skvortsov's decomposition, and the
	sharp Walsh--Paley estimates.  This gives the exact global supremum
	\[
	\sup_{n\in\mathbb P}\|K^\kappa_n\|_1=\frac{71}{50}.
	\]
\end{abstract}

\subjclass[2020]{42C10}
\keywords{Fourier series, Fej\'er kernel, $L_1$-norm estimates, Walsh--Kaczmarz system, Walsh--Paley system}

\maketitle

\section{Notation and definitions}

Let $\mathbb P$ be the set of positive natural numbers and let
${\mathbb N}:={\mathbb P}\cup\{0\}$.  Let ${\mathbb Z}_2$ denote the cyclic
group of order $2$ with the discrete topology.  The group operation is addition
modulo $2$.  The dyadic group is
\[
	G:=\prod_{k=0}^{\infty}{\mathbb Z}_2.
\]
The group operation on $G$ is coordinate-wise addition, denoted by $+$.  The
elements of $G$ are sequences $x=(x_0,x_1,\dots)$, where
$x_k\in\{0,1\}$ for all $k\in\mathbb N$.

Let $\mu$ be the product measure of the measure $\mu_2$ on ${\mathbb Z}_2$ given
by $\mu_2(\{0\})=\mu_2(\{1\})=1/2$.  Then $\mu$ is the Haar measure on $G$.
Let $L_p(G)$ denote the usual Lebesgue spaces on $G$ with norm $\|\cdot\|_p$,
where $1\le p<\infty$.

Dyadic intervals are defined by
\[
	I_0(x):=G,
\]
\[
	I_n(x):=\{y\in G:y_0=x_0,\dots,y_{n-1}=x_{n-1}\},
\]
where $x\in G$ and $n\in\mathbb P$.  We write $I_n:=I_n(0)$ and
$J_n:=I_n\setminus I_{n+1}$.

The $n$th Rademacher function is defined by
\[
	r_n(x):=(-1)^{x_n},\qquad x\in G,\ n\in\mathbb N.
\]
Each $n\in\mathbb N$ has a unique dyadic expansion
\[
	n=\sum_{k=0}^{\infty}n_k2^k,
\]
where $n_k\in\{0,1\}$ and only finitely many $n_k$ are non-zero.  For
$n\in\mathbb P$ let
\[
	|n|:=\max\{k\in\mathbb N:n_k\ne0\},
\]
and put $|0|:=0$.

The Walsh--Paley functions are $w_0:=1$ and, for $n\in\mathbb P$,
\[
	w_n(x):=\prod_{k=0}^{\infty}r_k^{n_k}(x)
	=r_{|n|}(x)(-1)^{\sum_{k=0}^{|n|-1}n_kx_k}.
\]
The Walsh--Kaczmarz functions are $\kappa_0:=1$ and, for $n\in\mathbb P$,
\[
	\kappa_n(x):=r_{|n|}(x)\prod_{k=0}^{|n|-1}r_{|n|-1-k}^{n_k}(x)
	=r_{|n|}(x)(-1)^{\sum_{k=0}^{|n|-1}n_kx_{|n|-1-k}}.
\]
Then
\[
	r_n=w_{2^n}=\kappa_{2^n}\qquad(n\in\mathbb N).
\]

The Walsh--Kaczmarz and Walsh--Paley systems coincide on each dyadic block:
\begin{equation}\label{block}
	\{\kappa_n:2^k\le n<2^{k+1}\}
	=
	\{w_n:2^k\le n<2^{k+1}\}
\end{equation}
for all $k\in\mathbb N$, and $\kappa_0=w_0$.  Both systems are orthonormal and
complete in $L_2(G)$.

Skvortsov \cite{Sk} established a relation between the Walsh--Kaczmarz and
Walsh--Paley systems.  He used the transformation $\tau_n:G\longrightarrow G$ defined by
\[
	\tau_n(x):=(x_{n-1},x_{n-2},\dots,x_1,x_0,x_n,x_{n+1},\dots)
\]
for $n\in\mathbb N$, with the convention $\tau_0(x):=x$.  Notice that
$\tau_1(x)=x$ as well.  The relation is
\[
	\kappa_n(x)=r_{|n|}(x)w_{n-2^{|n|}}(\tau_{|n|}(x))
\]
for all $n\in\mathbb P$ and $x\in G$.  The transformation $\tau_n$ is
measure-preserving.

Let ${\mathcal P}^{\xi}_n$ be the collection of Walsh--Paley or
Walsh--Kaczmarz polynomials of order less than $n$.  Thus these are functions of the
form
\[
	P(x)=\sum_{k=0}^{n-1}c_k\xi_k(x),
\]
where $\xi_k=w_k$ or $\xi_k=\kappa_k$.  By \eqref{block},
\[
	{\mathcal P}_{2^n}:={\mathcal P}^w_{2^n}={\mathcal P}^{\kappa}_{2^n}.
\]
The collections of all Walsh--Paley and Walsh--Kaczmarz polynomials coincide.

For $\xi=w$ or $\xi=\kappa$, the $k$th Fourier coefficient, the $n$th partial
sum and the $n$th Fej\'er mean are
\[
	\widehat f^{\xi}(k):=\int_G f(x)\xi_k(x)\,d\mu(x),
\]
\[
	S_n^{\xi}(f;x):=\sum_{k=0}^{n-1}\widehat f^{\xi}(k)\xi_k(x),
	\qquad
	\sigma_n^{\xi}(f;x):=\frac1n\sum_{k=1}^{n}S_k^{\xi}(f;x),
\]
where $n\in\mathbb P$.

The $n$th Dirichlet and Fej\'er kernels with respect to the Walsh--Paley or
Walsh--Kaczmarz system are
\[
	D_n^{\xi}:=\sum_{k=0}^{n-1}\xi_k,
	\qquad
	K_n^{\xi}:=\frac1n\sum_{k=1}^{n}D_k^{\xi},
	\qquad n\in\mathbb P.
\]
Then
\[
	S_n^{\xi}(f;x)=\int_G f(u)D_n^{\xi}(x+u)\,d\mu(u),
\]
and
\[
	\sigma_n^{\xi}(f;x)=\int_G f(u)K_n^{\xi}(x+u)\,d\mu(u).
\]

\section{Historical remarks and main results}

The Walsh system in the Kaczmarz enumeration has been studied extensively; see,
for example, \cite{G1,GG,N1,Sch,Si,Sk,Sn,Yo}.  The Dirichlet kernels of the
Walsh--Kaczmarz system have a less favourable behaviour than those of the
Walsh--Paley system.

Indeed, \v{S}ne\u{\i}der, who introduced this system in 1948, proved in
\cite{Sn} that
\[
	\limsup_{n\to\infty}\frac{D_n^{\kappa}(x)}{\log n}>0
\]
for almost every $x\in G$.  In the Walsh--Paley case one has the stronger pointwise estimate
\[
	\frac{D_n^w(x)}{\log n}\to0
\]
as $n\to\infty$, for every $x\in G\setminus\{0\}$.
This growth of the Walsh--Kaczmarz Dirichlet kernels allows one to construct
functions whose Walsh--Kaczmarz Fourier series diverge almost everywhere; see
Balashov \cite{Bal} and Polyakov \cite{Pol}.

Several positive convergence results are also known.
Schipp \cite{Sch} and W. S. Young \cite{Yo} showed that the Walsh--Kaczmarz
system is a convergence system.  Skvortsov \cite{Sk} proved uniform convergence
of the Fej\'er means to $f$ for every continuous function $f$.  G\'at \cite{G1}
proved almost everywhere convergence of the Fej\'er means for every
$f\in L_1(G)$.  Also, as in the Walsh--Paley case, the partial sums
$S_n^{\kappa}(f)$ converge to $f$ in $L_p(G)$ whenever $1<p<\infty$.

The $L_1$-behaviour of Fej\'er kernels associated with the Walsh--Paley system is
a classical topic in summability theory.  Early results gave
\[
\sup_{n\in\mathbb P}\|K_n^w\|_1<2
\]
(see, e.g., Yano \cite{Y1}).  Toledo \cite[Theorem~7]{Tol} improved this and
proved the exact supremum
\[
\sup_{n\in\mathbb P}\|K_n^w\|_1=\frac{17}{15}.
\]
At powers of two, the Walsh--Paley case is especially simple:
\[
\|K_{2^n}^w\|_1=1\qquad(n\in\mathbb N).
\]

In the Walsh--Kaczmarz case sharp $L_1(G)$-bounds for the Fej\'er kernels are
more difficult.  Skvortsov's decomposition relates the Walsh--Kaczmarz kernels
to the Walsh--Paley kernels.  However, a sharp estimate for the $L_1$-norms does
not follow from a direct triangle inequality.  Blahota and D. Nagy \cite{BD3}
obtained the uniform estimate
\begin{equation}\label{BDt}
	\|K_n^{\kappa}\|_1<\frac{32}{15}\qquad(n\in\mathbb P).
\end{equation}
This estimate is not sharp for the present problem.

Our first aim is to determine the exact supremum over indices that are powers of
two.  We prove
\[
\sup_{n\in\mathbb N}\|K_{2^n}^{\kappa}\|_1
=
\lim_{n\to\infty}\|K_{2^n}^{\kappa}\|_1
=
\frac43.
\]
We also prove that the sequence $(\|K_{2^n}^{\kappa}\|_1:n\in\mathbb N)$ is
non-decreasing and is strictly increasing from $n=2$ on.

Our second aim is to solve the global supremum problem.  We combine the analysis
along powers of two with an exact max identity from Skvortsov's decomposition
and with Toledo's blockwise Walsh--Paley estimates \cite[Theorems~2 and~6]{Tol}.
This gives the sharp result
\[
\sup_{n\in\mathbb P}\|K_n^{\kappa}\|_1=\frac{71}{50}.
\]
The value $71/50$ is approached along the indices
\[
2^m+\left\lceil\frac{2^{m+1}}3\right\rceil,
\]
which are closely related to Toledo's alternating extremal indices
\cite[Theorem~6]{Tol}.

\section{Auxiliary results}

\begin{lemma}[Paley's lemma \cite{SWSP}, p.~7]\label{Paley}
Let $n\in\mathbb N$.  Then
\[
	D_{2^n}(x):=D_{2^n}^w(x)=D_{2^n}^{\kappa}(x)=
	\begin{cases}
		2^n, & x\in I_n,\\
		0, & x\notin I_n.
	\end{cases}
\]
\end{lemma}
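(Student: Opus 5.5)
The equality $D_{2^n}^w=D_{2^n}^\kappa$ needs no computation: by \eqref{block}, applied for $k=0,1,\dots,n-1$, together with $\kappa_0=w_0$, the two families $\{w_j:0\le j<2^n\}$ and $\{\kappa_j:0\le j<2^n\}$ are the same set of functions. Both Dirichlet kernels are sums over this set, so they coincide. It therefore suffices to compute $D_{2^n}^w$, and I will do this by factorizing it into a product of Rademacher terms.

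For $0\le j<2^n$ we have $j=\sum_{k=0}^{n-1}j_k2^k$ and $w_j=\prod_{k=0}^{n-1}r_k^{j_k}$. The map $j\mapsto(j_0,\dots,j_{n-1})$ is a bijection from $\{0,\dots,2^n-1\}$ onto $\{0,1\}^n$, so expanding the product gives
\[
D_{2^n}^w(x)=\sum_{(j_0,\dots,j_{n-1})\in\{0,1\}^n}\prod_{k=0}^{n-1}r_k^{j_k}(x)=\prod_{k=0}^{n-1}\bigl(1+r_k(x)\bigr).
\]
Each factor is $1+(-1)^{x_k}$, which equals $2$ if $x_k=0$ and $0$ if $x_k=1$. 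Hence the product is $2^n$ exactly when $x_0=\dots=x_{n-1}=0$, that is, when $x\in I_n$, and it is $0$ otherwise.

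The case $n=0$ fits this argument: the product is empty, so $D_1=w_0=1=2^0$ on $I_0=G$. The only step needing any care is the expansion of the product into the sum over $\{0,1\}^n$, and this is immediate from the binary-digit bijection. I expect no real obstacle here.
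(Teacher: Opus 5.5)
Your proof is correct. The paper gives no proof of its own here (it cites Schipp--Wade--Simon), and your factorization $D_{2^n}^w=\prod_{k=0}^{n-1}(1+r_k)$, combined with the block identity \eqref{block}, is the standard argument behind that citation. The one implicit step, passing from equality of the two sets of functions to equality of the two sums, is harmless: orthonormality makes $j\mapsto w_j$ and $j\mapsto\kappa_j$ injective, so each sum runs over each function of the common set exactly once.
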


\begin{lemma}[Toledo's Walsh--Paley estimates]\label{L:Toledo-Paley-estimates}\label{L:blockwise-Toledo-excess}
The Walsh--Paley Fej\'er kernels satisfy
\begin{equation}\label{eq:Toledo-global}
	\sup_{n\in\mathbb P}\|K_n^w\|_1=\frac{17}{15}.
\end{equation}
Moreover, for every $n\in\mathbb P$,
\begin{equation}\label{eq:Toledo-block-excess}
	n\|K_n^w\|_1-n\le\frac{8}{45}2^{|n|}.
\end{equation}
\end{lemma}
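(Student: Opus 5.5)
The statement collects known results of Toledo, so the plan is to reduce it to citations plus a short bookkeeping argument, not to prove it from scratch. Identity \eqref{eq:Toledo-global} is exactly \cite[Theorem~7]{Tol}, and I will quote it directly. The real content is the blockwise excess bound \eqref{eq:Toledo-block-excess}. I expect this to follow from \cite[Theorems~2 and~6]{Tol} once the normalisations are matched.

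To match the normalisations, write $n=2^m+k$ with $m=|n|$ and $0\le k<2^m$. On the dyadic block I use the Walsh--Paley splitting $D^w_{2^m+j}=D_{2^m}+r_mD^w_j$, where $0\le j<2^m$. Summing it over $j$ gives
\[
nK^w_n=2^mK^w_{2^m}+kD_{2^m}+r_m\,kK^w_k .
\]
By Lemma~\ref{Paley}, $D_{2^m}$ is supported on $I_m$. The kernel $K^w_{2^m}$ is nonnegative with integral $1$, and $kK^w_k$ depends only on $x_0,\dots,x_{m-1}$.

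Integrating $|nK^w_n|$ separately over the two halves $\{x_m=0\}$ and $\{x_m=1\}$ then expresses $n\|K^w_n\|_1-n$ as an explicit functional of the block data, namely of $k$ and of $kK^w_k$ restricted to $I_m$ and its complement. I expect this to be precisely the quantity that Toledo's Theorem~2 evaluates or bounds, and dividing by $2^m$ turns it into a function of the relative position $k/2^m$ within the block.

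Toledo's Theorem~6 identifies the extremal indices, which alternate in binary digits and approach $k/2^m\approx 1/3$, and gives the maximal excess on the block. The remaining step is to check that this maximum, normalised by $2^{|n|}$, is at most $8/45$. The constant is consistent with the global result: at $n\approx\frac43 2^m$ we get $n\|K^w_n\|_1\approx \frac43\cdot\frac{17}{15}2^m=\frac{68}{45}2^m$, so the excess is $n\|K^w_n\|_1-n\approx\frac{68-60}{45}2^m=\frac{8}{45}2^m$.

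The main obstacle is this translation step: confirming that Toledo's blockwise statement is stated, or can be restated, as a bound on $n\|K^w_n\|_1-n$ that is uniform in $k$ and not only asymptotic, so that the inequality holds for every $n$ and not just in the limit. If his Theorem~6 gives only a limsup, I would fall back on the self-similar recursion obtained from the display above. Applied to $kK^w_k$ inside the block, it shows that the normalised excess satisfies a monotone iteration, and an induction on $m$ would then give the bound $8/45$ at every finite level.
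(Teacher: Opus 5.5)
Your route is the paper's route. You quote Toledo's Theorem~7 for \eqref{eq:Toledo-global}, locate the maximum of $E_n:=n\|K_n^w\|_1-n$ on the block $2^m\le n<2^{m+1}$ at the alternating index $N_m$ via Toledo's block results, and then bound $E_{N_m}$. However, the step you leave open is the whole content of \eqref{eq:Toledo-block-excess}, and your consistency check does not settle it. It only gives $E_{N_m}/2^m\to 8/45$, which by itself allows $E_{N_m}>\frac{8}{45}2^m$ for every $m$.

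Two things have to be stated exactly. First, you need the maximizer of $E_n$, not of $\|K_n^w\|_1$; since $E_n=n(\|K_n^w\|_1-1)$, the two need not coincide. The paper takes this from Toledo's Theorems~4--6: the maximum is attained at $N_m$ and, with the same value, at its partner $3\cdot2^m-1-N_m\approx\frac53 2^m$, which you do not mention. Second, you need the non-asymptotic value at the maximizer. The paper inserts Toledo's explicit formula for $\|K^w_{N_m}\|_1$ and obtains, for example when $m=2s$, $E_{N_m}=\frac{8}{45}4^s-\frac{3s+1}{9}-\frac{1}{15\cdot4^s}$.

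Your uniformity worry disappears once the maximizer is known exactly, and with it the fallback induction, which as written is not an argument. The reason is that Theorem~7 is itself a bound valid for every index. Since $N_m\le(4\cdot2^m-1)/3<\frac43 2^m$ for both parities of $m$,
\[
E_{N_m}=N_m\bigl(\|K^w_{N_m}\|_1-1\bigr)\le\frac{2}{15}N_m<\frac{8}{45}2^m .
\]
This works only at the smaller maximizer. At the partner $\approx\frac53 2^m$ the same estimate gives merely $\frac29 2^m$.

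So your heuristic becomes a proof once you replace ``$\approx\frac{17}{15}$'' by ``$\le\frac{17}{15}$'' and ``$n\approx\frac43 2^m$'' by ``$N_m<\frac43 2^m$''. This closing step does not even need Toledo's explicit formula, only the exact location of the block maximum of $E_n$.
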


\begin{proof}
The identity \eqref{eq:Toledo-global} is Toledo's main theorem; see
\cite[Theorem~7]{Tol}.

Put
\[
	E_n:=n\|K_n^w\|_1-n.
\]
We use Toledo's explicit iteration formula, in particular
\cite[Theorem~2 and formula~(8)]{Tol}.  We also use the block comparison and
symmetry results which lead to \cite[Theorems~4--6]{Tol}.  These results show
that, in the dyadic block
\[
	2^m\le n<2^{m+1},
\]
the maximum of $E_n$ is attained at Toledo's alternating extremal index
\[
	N_m:=
	\begin{cases}
		1+2^2+\cdots+2^m, & m \text{ even},\\
		2+2^3+\cdots+2^m, & m \text{ odd},
	\end{cases}
\]
or at its symmetric partner with respect to the relation
$n+\widetilde n=3\cdot2^m-1$.  The two indices give the same value of $E_n$ by
\cite[Theorem~4]{Tol}.

It is enough to evaluate Toledo's explicit formula at $N_m$.  If $m=2s$, then
\[
	N_m=\frac{4^{s+1}-1}{3}
\]
and Toledo's formula gives
\[
	\|K_{N_m}^w\|_1
	=
	\frac{17}{15}
	-
	\frac{s+1}{4^{s+1}-1}
	+
	\frac1{5\cdot4^s}.
\]
Hence
\[
\begin{aligned}
	E_{N_m}
	&=
	N_m\left(\|K_{N_m}^w\|_1-1\right)  \\
	&=
	\frac{8}{45}4^s
	-
	\frac{3s+1}{9}
	-
	\frac1{15\cdot4^s}
	\le
	\frac{8}{45}4^s
	=
	\frac{8}{45}2^m.
\end{aligned}
\]
If $m=2s+1$, then
\[
	N_m=\frac{2(4^{s+1}-1)}{3}
\]
and Toledo's formula gives
\[
	\|K_{N_m}^w\|_1
	=
	\frac{17}{15}
	-
	\frac12\frac{s+1}{4^{s+1}-1}
	+
	\frac1{30\cdot4^s}.
\]
Therefore
\[
\begin{aligned}
	E_{N_m}
	&=
	N_m\left(\|K_{N_m}^w\|_1-1\right)  \\
	&=
	\frac{16}{45}4^s
	-
	\frac{s+1}{3}
	-
	\frac1{45\cdot4^s}
	\le
	\frac{16}{45}4^s
	=
	\frac{8}{45}2^m.
\end{aligned}
\]
This proves \eqref{eq:Toledo-block-excess} in the whole block.
\end{proof}

\begin{remark}\label{rem:Paley-dyadic}
It is known that $\|K_{2^n}^w\|_1=1$ for every $n\in\mathbb N$.
\end{remark}

\begin{lemma}[Skvortsov \cite{Sk}]\label{Skv}
Let $n\in\mathbb P$ and write $n=2^{|n|}+k$, where
$k\in\{0,\dots,2^{|n|}-1\}$.  Then
\[
	nK_n^{\kappa}(x)
	=1+\sum_{i=0}^{|n|-1}2^iD_{2^i}(x)
	+\sum_{i=0}^{|n|-1}2^ir_i(x)K_{2^i}^w(\tau_i(x))+kD_{2^{|n|}}(x)
	+kr_{|n|}(x)K_k^w(\tau_{|n|}(x)),
\]
where the last term is omitted if $k=0$.
\end{lemma}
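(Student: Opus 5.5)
This is Skvortsov's decomposition (Lemma~\ref{Skv}), and I would prove it by direct summation of the Dirichlet kernels using the block relation $\kappa_j(x)=r_{|j|}(x)w_{j-2^{|j|}}(\tau_{|j|}(x))$. The first step is a formula for the Walsh--Kaczmarz Dirichlet kernel inside a dyadic block. For $2^i\le j\le 2^{i+1}$, write $j=2^i+l$ with $0\le l\le 2^i$. Splitting the sum defining $D_j^\kappa$ at $2^i$ and using Lemma~\ref{Paley} for the first $2^i$ terms gives
\[
D^\kappa_{2^i+l}(x)=D_{2^i}(x)+\sum_{t=0}^{l-1}\kappa_{2^i+t}(x)
=D_{2^i}(x)+r_i(x)\sum_{t=0}^{l-1}w_t(\tau_i(x))
=D_{2^i}(x)+r_i(x)D^w_l(\tau_i(x)).
\]
Here $D^w_0:=0$, and $|2^i+t|=i$ for $t<2^i$. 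This holds for all $0\le l\le 2^i$, including $l=2^i$, since only indices $t<2^i$ occur.

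Next I sum $D^\kappa_j$ for $j=1,\dots,n$, grouping the indices into full blocks $2^i<j\le 2^{i+1}$ for $i=0,\dots,|n|-1$. The term $j=1$ is kept separately and gives $D_1=1$. The remaining indices form the partial block $2^{|n|}<j\le 2^{|n|}+k$.

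A full block consists of $j=2^i+l$ with $l=1,\dots,2^i$. By the formula above it contributes
\[
2^iD_{2^i}(x)+r_i(x)\sum_{l=1}^{2^i}D^w_l(\tau_i(x))=2^iD_{2^i}(x)+2^ir_i(x)K^w_{2^i}(\tau_i(x)).
\]
The partial block consists of $l=1,\dots,k$ with $i=|n|$, so it contributes $kD_{2^{|n|}}(x)+kr_{|n|}(x)K^w_k(\tau_{|n|}(x))$. This contribution is absent when $k=0$.

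It remains to check the indexing. The blocks $(2^i,2^{i+1}]$ for $i<|n|$, together with $\{1\}$, cover exactly $\{1,\dots,2^{|n|}\}$, and the partial block covers $\{2^{|n|}+1,\dots,n\}$. Adding everything gives $nK^\kappa_n=\sum_{j=1}^n D^\kappa_j$, which is the claimed identity.

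There is no real obstacle in this argument. The only delicate point is the bookkeeping at the block endpoints: the formula must be valid at $l=2^i$, where $2^i+l=2^{i+1}$ lies outside the block $[2^i,2^{i+1})$ on which the relation for $\kappa$ holds. The formula remains valid there because it uses $\kappa_{2^i+t}$ only for $t<2^i$.
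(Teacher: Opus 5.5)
Your proof is correct. The paper gives no proof of this lemma; it only quotes it from Skvortsov, so there is no internal argument to compare against.

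Your route is the standard derivation, and it makes the statement self-contained. You establish the block formula $D^{\kappa}_{2^i+l}=D_{2^i}+r_i\,(D^w_l\circ\tau_i)$ for $0\le l\le 2^i$ (with $D^w_0:=0$), and you sum it over $j=1,\dots,n$. The only input is the relation $\kappa_n=r_{|n|}\,(w_{n-2^{|n|}}\circ\tau_{|n|})$ stated in the notation section.

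Your bookkeeping is sound:
\begin{itemize}
\item The endpoint $l=2^i$ involves only $\kappa_{2^i+t}$ with $t<2^i$.
\item The sets $\{1\}\cup\bigcup_{i<|n|}(2^i,2^{i+1}]$ exhaust $\{1,\dots,2^{|n|}\}$.
\item The degenerate cases $n=1$ (all sums empty) and $k=0$ come out right.
\end{itemize}
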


\begin{corollary}\label{Skv_dyadic}
Let $n\in\mathbb N$.  Then, for every $x\in G$,
\begin{equation}\label{eq:Skv_dyadic}
	2^nK_{2^n}^{\kappa}(x)
	=
	1+\sum_{i=0}^{n-1}2^iD_{2^i}(x)
	+\sum_{i=0}^{n-1}2^ir_i(x)K_{2^i}^w(\tau_i(x)).
\end{equation}
\end{corollary}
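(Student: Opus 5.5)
The plan is to specialize Lemma~\ref{Skv} to the case $k=0$. Take $n\in\mathbb N$ and apply Lemma~\ref{Skv} with the index $2^n$ in place of $n$. For $n\ge1$ the index $2^n$ is a positive integer whose dyadic expansion has a single non-zero digit, at position $n$. Hence $|2^n|=n$, and in the decomposition $2^n=2^{|2^n|}+k$ we get $k=0$.

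With $k=0$, the term $kD_{2^{|n|}}(x)$ vanishes and the final term $kr_{|n|}(x)K_k^w(\tau_{|n|}(x))$ is omitted by the convention stated in Lemma~\ref{Skv}. Substituting $|2^n|=n$ into the two sums $\sum_{i=0}^{|n|-1}$ turns them into $\sum_{i=0}^{n-1}$, and \eqref{eq:Skv_dyadic} follows directly for every $x\in G$.

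The only point needing separate care is $n=0$, since $|\cdot|$ is defined with the convention $|0|:=0$ and we should also confirm that $|1|=0$. Here $2^0=1\in\mathbb P$ and $|1|=0$, so Lemma~\ref{Skv} still applies with $k=0$. Both sums are then empty, and the formula reads $1\cdot K_1^\kappa(x)=1$. This can also be checked directly: $K_1^\kappa=D_1^\kappa=\kappa_0=1$, so the right-hand side of \eqref{eq:Skv_dyadic} reduces to $1$ as required.

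No real obstacle is expected. The one thing to verify is that the omission convention in Lemma~\ref{Skv} is applied consistently, so that $K_0^w$, which is undefined, never appears.
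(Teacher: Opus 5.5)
Your proposal is correct and matches the paper's intended argument: the paper gives no separate proof, but the corollary is Lemma~\ref{Skv} applied to the index $2^n$, where $|2^n|=n$ and $k=0$, so the $k$-terms drop out (the proof of Corollary~\ref{sec} uses exactly this). Your separate check of $n=0$, where both sums are empty and $K_1^{\kappa}=1$, is fine; only $|1|=0$ is needed there, and the convention $|0|:=0$ plays no role.
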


\begin{corollary}\label{sec}
Let $n=2^{|n|}+k$, where $k\in\{0,\dots,2^{|n|}-1\}$.  Then
\[
	nK_n^{\kappa}(x)
	=
	2^{|n|}K_{2^{|n|}}^{\kappa}(x)
	+kD_{2^{|n|}}(x)
	+kr_{|n|}(x)K_k^w(\tau_{|n|}(x)),
\]
where the last term is omitted if $k=0$.
\end{corollary}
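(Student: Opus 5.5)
The statement to prove is Corollary~\ref{sec}. The plan is to derive it directly from Skvortsov's formula in Lemma~\ref{Skv}, using Corollary~\ref{Skv_dyadic} to recognise the first block of terms.

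Fix $n\in\mathbb P$ and write $n=2^{|n|}+k$ with $0\le k<2^{|n|}$. Lemma~\ref{Skv} gives
\[
	nK_n^{\kappa}(x)
	=\Bigl(1+\sum_{i=0}^{|n|-1}2^iD_{2^i}(x)
	+\sum_{i=0}^{|n|-1}2^ir_i(x)K_{2^i}^w(\tau_i(x))\Bigr)
	+kD_{2^{|n|}}(x)
	+kr_{|n|}(x)K_k^w(\tau_{|n|}(x)),
\]
where the last term is omitted when $k=0$.

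Next, apply Corollary~\ref{Skv_dyadic} with its parameter equal to $|n|$. This identifies the bracketed expression exactly as $2^{|n|}K_{2^{|n|}}^{\kappa}(x)$, for every $x\in G$. Substituting this into the display above yields the claimed identity.

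The case $k=0$ needs a short consistency check. There $n=2^{|n|}$, the term $kD_{2^{|n|}}$ vanishes, and the $K_k^w$ term is absent. The identity then reduces to $nK_n^\kappa=2^{|n|}K^\kappa_{2^{|n|}}$, which holds trivially.

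The only point requiring care is that Corollary~\ref{Skv_dyadic} must itself be available. If it is regarded as the case $k=0$ of Lemma~\ref{Skv} applied to the index $2^{|n|}$, then everything is consistent: the sums in Lemma~\ref{Skv} depend only on $|n|$, not on $k$. No genuine obstacle is expected; the argument is pure bookkeeping.
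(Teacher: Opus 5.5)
Your proposal is correct and matches the paper's proof: the paper also identifies the first three terms of Lemma~\ref{Skv} with the right-hand side of Corollary~\ref{Skv_dyadic} at the parameter $|n|$, that is, with $2^{|n|}K_{2^{|n|}}^{\kappa}(x)$. Your remark that these sums depend only on $|n|$ and not on $k$ is the one point worth stating, and you stated it.
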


\begin{proof}
This follows from Lemma~\ref{Skv} and Corollary~\ref{Skv_dyadic}.  The first
three terms in Lemma~\ref{Skv} are exactly
$2^{|n|}K_{2^{|n|}}^{\kappa}(x)$.
\end{proof}

We write $e_n:=(0,\dots,0,1,0,\dots)\in G$, where the $n$th coordinate is the
only non-zero coordinate.

\begin{lemma}[Golubov, Skvortsov and Efimov \cite{GAS}]\label{Kw2i_atoms}
Let $n\in\mathbb N$.  Then, for every $x\in G$,
\[
K_{2^n}^w(x)=
\begin{cases}
	\dfrac{2^n+1}{2}, & x\in I_n,\\[1mm]
	2^{t-1}, & x\in I_n(e_t),\quad t=0,1,\dots,n-1,\\
	0, & \text{otherwise.}
\end{cases}
\]
\end{lemma}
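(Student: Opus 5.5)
The natural proof of Lemma~\ref{Kw2i_atoms} goes by induction on $n$, using the Walsh--Paley recursion for Fej\'er kernels at powers of two. The base case $n=0$ is immediate: $K_1^w=D_1^w=1$, and $I_0=G$ with $(2^0+1)/2=1$, so the formula holds.

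For the inductive step I would use the standard identity, obtained from $w_{2^n+j}=r_nw_j$ for $0\le j<2^n$:
\[
D^w_{2^n+j}=D_{2^n}+r_nD^w_j .
\]
Summing over $j=1,\dots,2^n$ then gives
\[
2^{n+1}K^w_{2^{n+1}}=2^nK^w_{2^n}+2^nD_{2^n}+r_n\,2^nK^w_{2^n},
\]
that is,
\[
K^w_{2^{n+1}}=\tfrac12\bigl((1+r_n)K^w_{2^n}+D_{2^n}\bigr).
\]

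I would then split $G$ according to Lemma~\ref{Paley} and the induction hypothesis.

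\begin{itemize}
\item On $I_{n+1}$ we have $r_n=1$ and $D_{2^n}=2^n$. This gives $\frac12\bigl(2^n+1+2^n\bigr)=\frac{2^{n+1}+1}{2}$.
\item On $I_n(e_n)=I_n\setminus I_{n+1}$ we have $r_n=-1$ and $D_{2^n}=2^n$. This gives $2^{n-1}$, which is the case $t=n$.
\item For $x\in I_n(e_t)$ with $t<n$, we have $D_{2^n}(x)=0$. If $x_n=0$, the value is $K^w_{2^n}(x)=2^{t-1}$, and these are exactly the points of $I_{n+1}(e_t)$. If $x_n=1$, the value is $0$.
\item For $x\notin I_n\cup\bigcup_{t<n}I_n(e_t)$, both $K^w_{2^n}(x)$ and $D_{2^n}(x)$ vanish, so the value is $0$.
\end{itemize}

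Collecting these cases gives exactly the stated formula at level $n+1$.

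The main obstacle is bookkeeping rather than analysis. One must check that the sets $I_{n+1}(e_t)$, for $t\le n$, together with $I_{n+1}$ and the remainder, partition $G$ correctly. One must also verify the identity $w_{2^n+j}=r_nw_j$; this follows directly from the product definition of $w_m$, because $j<2^n$.
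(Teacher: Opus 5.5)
Your argument is correct. The paper gives no proof of this lemma: it quotes it from Golubov, Efimov and Skvortsov and uses it as a black box. So your induction is a self-contained replacement for a citation, not an alternative to an argument in the text.

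You correctly derive the recursion $K^w_{2^{n+1}}=\frac12\bigl((1+r_n)K^w_{2^n}+D_{2^n}\bigr)$ from $w_{2^n+j}=r_nw_j$ for $0\le j<2^n$. It is the natural tool here. It is the Walsh--Paley analogue of the splitting in Corollary~\ref{sec}, applied to a complete dyadic block, and it needs nothing beyond Paley's lemma. The citation keeps the paper short. Your proof lets the reader check, within the paper, the atom values used in Lemmas~\ref{main_on_Es}, \ref{tail} and \ref{main_mass_identity}.

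One labelling slip needs fixing. By the paper's definition of $I_n(x)$, only the coordinates $0,\dots,n-1$ are prescribed, so $I_n(e_n)=I_n$, not $I_n\setminus I_{n+1}$. The set on which you compute the value $2^{n-1}$ is $I_n\setminus I_{n+1}=I_{n+1}(e_n)$, which is exactly the $t=n$ atom of the level-$(n+1)$ formula.

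With this correction your case analysis covers $G$ exactly. The sets $I_{n+1}$ and $I_{n+1}(e_t)$, $0\le t\le n$, are disjoint cylinders. Every other point either lies in $I_n(e_t)\cap\{x_n=1\}$ for some $t<n$, or has at least two ones among $x_0,\dots,x_{n-1}$. In both cases you correctly obtain the value $0$.
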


\begin{remark}\label{rem}
$K_1^w(x)=K_1^{\kappa}(x)=1$ for all $x\in G$.
\end{remark}

\section[The supremum along powers of two]
{The supremum along powers of two: \texorpdfstring{$\sup_{n\in\mathbb N}\|K_{2^n}^{\kappa}\|_1$}{supremum of the L1-norms along powers of two}}

\subsection[The sharp estimate along powers of two]
{The sharp estimate for \texorpdfstring{$\|K_{2^n}^{\kappa}\|_1$}{the L1-norm along powers of two}}

Define, for $0\le s\le n-1$,
\[
	R_{s,n}(x):=\sum_{i=s+1}^{n-1}2^ir_i(x)K_{2^i}^w(\tau_i(x)),
\]
\[
	M_s(x):=1+\sum_{i=0}^{s}2^iD_{2^i}(x)
	+\sum_{i=0}^{s}2^ir_i(x)K_{2^i}^w(\tau_i(x)),
\]
and
\[
	T_n:=\sum_{s=0}^{n-1}\frac1{2^n}\int_{J_s}|R_{s,n}(x)|\,d\mu(x).
\]
By Corollary~\ref{Skv_dyadic} and Lemma~\ref{Paley},
\begin{equation}\label{ce}
	2^nK_{2^n}^{\kappa}(x)=M_s(x)+R_{s,n}(x)
\end{equation}
for every $s\in\{0,\dots,n-1\}$ and every $x\in J_s$.

\begin{lemma}\label{main_on_Es}
Let $s\in\mathbb N$.  Then, for every $x\in J_s$,
\[
	M_s(x)=4^s.
\]
\end{lemma}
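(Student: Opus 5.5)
The identity is a direct evaluation: on $J_s$ every term of $M_s$ is constant, and we read off each constant from Lemma~\ref{Paley} and Lemma~\ref{Kw2i_atoms}.

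Fix $x\in J_s$, so $x_0=\dots=x_{s-1}=0$ and $x_s=1$. In particular $x\in I_s\subset I_i$ for every $i\le s$.

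\emph{Dirichlet part.} By Lemma~\ref{Paley}, $D_{2^i}(x)=2^i$ for all $0\le i\le s$. Hence
\[
1+\sum_{i=0}^{s}2^iD_{2^i}(x)=1+\sum_{i=0}^{s}4^i .
\]

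\emph{Fej\'er part.} We first locate $\tau_i(x)$ for $0\le i\le s$. The map $\tau_i$ only permutes the first $i$ coordinates, and these all vanish. So $\tau_i(x)$ has zero coordinates in positions $0,\dots,s-1$, and in particular $\tau_i(x)\in I_i$. Lemma~\ref{Kw2i_atoms} then gives
\[
K^w_{2^i}(\tau_i(x))=\frac{2^i+1}{2}\qquad(0\le i\le s).
\]
For the signs we have $r_i(x)=1$ when $i<s$ and $r_s(x)=-1$. The second sum therefore equals
\[
\sum_{i=0}^{s-1}\frac{4^i+2^i}{2}-\frac{4^s+2^s}{2}.
\]

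\emph{Summation.} Using $\sum_{i<s}4^i=(4^s-1)/3$ and $\sum_{i<s}2^i=2^s-1$, we get
\[
M_s(x)=1+\frac{4^s-1}{3}+4^s+\frac{4^s-1}{6}+\frac{2^s-1}{2}-\frac{4^s}{2}-\frac{2^s}{2}.
\]
The $2^s$ terms cancel, and the remaining terms collapse to $4^s$.

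The case $s=0$ is covered by the same computation with empty sums: $M_0(x)=1+1-1=1$.

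The only point that needs care is the location of $\tau_i(x)$. We must check that the reversal of the first $i$ coordinates keeps the point in $I_i$, so that the atom value $(2^i+1)/2$ applies and not one of the values $2^{t-1}$. This holds precisely because $i\le s$, so all coordinates being permuted are zero.
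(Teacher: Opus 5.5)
Your proposal is correct and follows the paper's proof essentially step for step. You evaluate the Dirichlet and Fej\'er terms of $M_s$ on $J_s$ via Lemma~\ref{Paley} and Lemma~\ref{Kw2i_atoms}, using $\tau_i(x)\in I_i$ for $i\le s$ and the signs $r_i(x)$, and your arithmetic, including the $s=0$ case and your justification of $\tau_i(x)\in I_i$ (which the paper leaves implicit), checks out.
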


\begin{proof}
Fix $x\in J_s$.  Then $x_s=1$ and, by Lemma~\ref{Paley},
$D_{2^i}(x)=2^i$ for $i\le s$.  Hence
\[
	\sum_{i=0}^{s}2^iD_{2^i}(x)=\sum_{i=0}^{s}4^i
	=\frac{4^{s+1}-1}{3}.
\]
For $i\le s$ we have $\tau_i(x)\in I_i$.  Thus, by Lemma~\ref{Kw2i_atoms},
\[
	K_{2^i}^w(\tau_i(x))=\frac{2^i+1}{2}.
\]
Moreover, $r_i(x)=1$ for $i\le s-1$ and $r_s(x)=-1$.  Thus
\[
\begin{aligned}
	\sum_{i=0}^{s}2^ir_i(x)K_{2^i}^w(\tau_i(x))
	&=\sum_{i=0}^{s-1}2^i\frac{2^i+1}{2}
	-2^s\frac{2^s+1}{2} \\
	&=-\frac{4^s+2}{3}.
\end{aligned}
\]
A simplification gives $M_s(x)=4^s$.
\end{proof}

\begin{lemma}\label{tail}
Let $n\ge2$ and let $s\in\{0,\dots,n-1\}$.

\medskip
\noindent\textup{(a)} We have
\[
	J_s=
	\left(\bigcup_{j=s+1}^{n-1}I_{j+1}(e_s+e_j)\right)\cup I_n(e_s),
\]
\[
	\mu(I_{j+1}(e_s+e_j))=2^{-j-1},
	\qquad
	\mu(I_n(e_s))=2^{-n},
\]
and these sets are pairwise disjoint.  Moreover, $R_{s,n}$ is a negative
constant on $I_{j+1}(e_s+e_j)$ and a non-negative constant on $I_n(e_s)$.

\medskip
\noindent\textup{(b)} For every $s=0,\dots,n-1$,
\[
	\int_{J_s}|R_{s,n}(x)|\,d\mu(x)
	=
	\frac{2^{n-s-1}-2^{s-n+1}}{3}.
\]
Consequently,
\[
	T_n=\frac{(2^n-2)(2^n-1)}{3\cdot4^n}.
\]
\end{lemma}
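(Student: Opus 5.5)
The plan is to make everything explicit on $J_s$ using Lemma~\ref{Kw2i_atoms}, and then reduce (b) to geometric sums.

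\textbf{Partition in (a).} Fix $s$ and $x\in J_s$, so $x_0=\dots=x_{s-1}=0$ and $x_s=1$. I split according to the first index $j>s$ with $x_j=1$. If $s<j\le n-1$, then $x\in I_{j+1}(e_s+e_j)$, a set of measure $2^{-j-1}$. If no such $j\le n-1$ exists, then $x\in I_n(e_s)$, a set of measure $2^{-n}$. These sets are clearly pairwise disjoint and their union is $J_s$.

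\textbf{Value of $R_{s,n}$ on $I_{j+1}(e_s+e_j)$.} For $s+1\le i\le n-1$ the first $i$ coordinates of $\tau_i(x)$ are $x_{i-1},\dots,x_0$. I treat two ranges of $i$.
\begin{itemize}
\item If $i>j$, these coordinates contain two ones, at the reversed positions of $s$ and $j$. Hence $\tau_i(x)$ lies in neither $I_i$ nor any $I_i(e_t)$, and Lemma~\ref{Kw2i_atoms} gives $K^w_{2^i}(\tau_i(x))=0$. This also removes the dependence on the non-constant factor $r_i(x)$.
\item If $s+1\le i\le j$, the only one among these coordinates sits at position $i-1-s$. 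So $\tau_i(x)\in I_i(e_{i-1-s})$ and $K^w_{2^i}(\tau_i(x))=2^{i-s-2}$.
\end{itemize}
Since $r_i(x)=1$ for $s<i<j$ and $r_j(x)=-1$, this gives
\[
R_{s,n}(x)=2^{-s-2}\Bigl(\sum_{i=s+1}^{j-1}4^i-4^j\Bigr)=-2^{-s-2}\,\frac{2\cdot4^j+4^{s+1}}{3}<0 .
\]

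\textbf{Value of $R_{s,n}$ on $I_n(e_s)$.} Here every $i\in\{s+1,\dots,n-1\}$ falls into the second case and has $r_i(x)=1$. Therefore
\[
R_{s,n}(x)=2^{-s-2}\,\frac{4^n-4^{s+1}}{3}\ \ge 0,
\]
which vanishes exactly when $s=n-1$. This settles (a).

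\textbf{Integral in (b).} I sum the absolute values against the measures from (a):
\[
\int_{J_s}|R_{s,n}|\,d\mu=\sum_{j=s+1}^{n-1}2^{-j-1}\,2^{-s-2}\,\frac{2\cdot4^j+4^{s+1}}{3}+2^{-n}\,2^{-s-2}\,\frac{4^n-4^{s+1}}{3}.
\]
The right-hand side splits into two geometric sums in $2^{j}$ and $2^{-j}$, and these should collapse to $(2^{n-s-1}-2^{s-n+1})/3$. I will check the formula against the degenerate case $s=n-1$, where both sides are $0$.

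\textbf{Formula for $T_n$.} I multiply by $2^{-n}$ and sum over $s=0,\dots,n-1$. The resulting geometric series are $\sum_s 2^{-s}$ and $\sum_s 2^{s}$, which give
\[
T_n=\frac{(2^n-2)(2^n-1)}{3\cdot4^n}.
\]

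I expect the main obstacle to be the careful bookkeeping of the reversed positions under $\tau_i$. In particular, the edge case $i=s+1$ gives $t=0$ and the value $2^{-1}$, and one must confirm that the terms with $i>j$ vanish identically. The remaining algebra is routine.
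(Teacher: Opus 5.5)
Your proposal is correct and follows essentially the same route as the paper. You partition $J_s$ by the first index $j>s$ with $x_j=1$, and you use Lemma~\ref{Kw2i_atoms} to see that only the terms $s<i\le j$ survive, each with value $2^{i-s-2}$. Your values of $R_{s,n}$ agree with \eqref{R1} and \eqref{R2}, and the geometric sums you left unchecked do collapse to $(2^{n-s-1}-2^{s-n+1})/3$. Your remark that the vanishing of the terms with $i>j$ is what makes $R_{s,n}$ constant on $I_{j+1}(e_s+e_j)$ is slightly more careful than the paper's appeal to dependence on the first $n$ coordinates.
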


\begin{proof}
\noindent\textup{(a)} The case $s=n-1$ is trivial, since the union is empty and
$R_{n-1,n}=0$.  Fix $s\in\{0,\dots,n-2\}$.  The decomposition and the measure
formulas follow by fixing coordinates.

Fix $x\in I_{j+1}(e_s+e_j)$, where $j\in\{s+1,\dots,n-1\}$.  For
$i\in\{s+1,\dots,n-1\}$, Lemma~\ref{Kw2i_atoms} and the definition of $\tau_i$
show that $K_{2^i}^w(\tau_i(x))\ne0$ if and only if
\[
	x_{s+1}=\cdots=x_{i-1}=0.
\]
On $I_{j+1}(e_s+e_j)$ this holds exactly for $i\le j$.  Moreover, for $i\le j$,
we have $\tau_i(x)\in I_i(e_{i-1-s})$, and hence
\[
	K_{2^i}^w(\tau_i(x))=2^{i-s-2}.
\]
Finally, on $I_{j+1}(e_s+e_j)$ we have $r_i(x)=1$ for $i<j$ and $r_j(x)=-1$.
Therefore
\begin{equation}\label{R1}
	R_{s,n}(x)
	=\sum_{i=s+1}^{j-1}2^i2^{i-s-2}-2^j2^{j-s-2}
	=-\frac{2^{2j-s-1}+2^s}{3}<0.
\end{equation}

If $x\in I_n(e_s)$, then $x_{s+1}=\cdots=x_{n-1}=0$.  Thus, for every
$i\in\{s+1,\dots,n-1\}$,
$\tau_i(x)\in I_i(e_{i-1-s})$ and $K_{2^i}^w(\tau_i(x))=2^{i-s-2}$, while
$r_i(x)=1$.  Hence
\begin{equation}\label{R2}
	R_{s,n}(x)
	=\sum_{i=s+1}^{n-1}2^i2^{i-s-2}
	=\frac{2^{2n-s-2}-2^s}{3}\ge0.
\end{equation}
Since $R_{s,n}$ depends only on the first $n$ coordinates, it is constant on
each of these sets.

\medskip
\noindent\textup{(b)} The identity is trivial for $s=n-1$.  For
$s\le n-2$, the sign information in part (a), together with \eqref{R1} and
\eqref{R2}, gives
\[
\begin{aligned}
	\int_{J_s}|R_{s,n}|\,d\mu
	&=\sum_{j=s+1}^{n-1}2^{-j-1}\frac{2^{2j-s-1}+2^s}{3}
	+2^{-n}\frac{2^{2n-s-2}-2^s}{3} \\
	&=\frac{2^{n-s-1}-2^{s-n+1}}{3}.
\end{aligned}
\]
Summing over $s$ gives the formula for $T_n$.
\end{proof}

\begin{lemma}\label{main_mass_identity}
Let $n\in\mathbb N$.  Then
\[
	\int_{I_n}|K_{2^n}^{\kappa}|\,d\mu=\frac12+\frac1{2^{n+1}}.
\]
\end{lemma}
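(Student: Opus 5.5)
The plan is to show that $K_{2^n}^{\kappa}$ is constant on $I_n$, compute that constant from Corollary~\ref{Skv_dyadic}, and multiply by $\mu(I_n)=2^{-n}$. The case $n=0$ is immediate: $I_0=G$ and $K_1^{\kappa}\equiv1$ by Remark~\ref{rem}, so the integral is $1=\frac12+\frac12$. From now on I take $n\ge1$ and fix $x\in I_n$, so that $x_0=\cdots=x_{n-1}=0$.

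I evaluate the three pieces of \eqref{eq:Skv_dyadic} for $0\le i\le n-1$.
\begin{itemize}
\item Since $x\in I_i$, Lemma~\ref{Paley} gives $D_{2^i}(x)=2^i$.
\item Since $x_i=0$, we have $r_i(x)=1$.
\item The map $\tau_i$ only permutes the first $i$ coordinates, which are all zero. Hence $\tau_i(x)\in I_i$, and Lemma~\ref{Kw2i_atoms} gives $K_{2^i}^w(\tau_i(x))=\frac{2^i+1}{2}$.
\end{itemize}
This is exactly the computation in the proof of Lemma~\ref{main_on_Es}, except that no sign flip occurs.

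Substituting into \eqref{eq:Skv_dyadic} gives
\[
2^nK_{2^n}^{\kappa}(x)=1+\sum_{i=0}^{n-1}4^i+\sum_{i=0}^{n-1}\frac{4^i+2^i}{2}
=1+\frac{4^n-1}{3}+\frac{4^n-1}{6}+\frac{2^n-1}{2}
=\frac{4^n+2^n}{2}.
\]
Hence $K_{2^n}^{\kappa}(x)=\frac{2^n+1}{2}$ on all of $I_n$.

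This value is positive, so the absolute value can be dropped. Integrating the constant over $I_n$ gives
\[
\int_{I_n}|K_{2^n}^{\kappa}|\,d\mu=2^{-n}\cdot\frac{2^n+1}{2}=\frac12+\frac1{2^{n+1}}.
\]

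There is no real obstacle here. The only point needing care is confirming that $\tau_i(x)\in I_i$ for all $i<n$, so that the top case of Lemma~\ref{Kw2i_atoms} applies; after that, checking the geometric sums is routine.
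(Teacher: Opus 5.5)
Your proposal is correct and follows essentially the same route as the paper's proof. Like the paper, you handle $n=0$ via Remark~\ref{rem}, evaluate each term of \eqref{eq:Skv_dyadic} on $I_n$ using Lemma~\ref{Paley} and Lemma~\ref{Kw2i_atoms} to get $2^nK_{2^n}^{\kappa}=2^{2n-1}+2^{n-1}$ there, and then integrate this positive constant over $I_n$.
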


\begin{proof}
The case $n=0$ follows from Remark~\ref{rem}.  Let $n\ge1$.  If $x\in I_n$,
then $x_0=\cdots=x_{n-1}=0$.  Thus, for $i\le n-1$, Lemma~\ref{Paley} gives
$D_{2^i}(x)=2^i$, while $r_i(x)=1$ and $\tau_i(x)\in I_i$.  By
Lemma~\ref{Kw2i_atoms}, $K_{2^i}^w(\tau_i(x))=(2^i+1)/2$.  Substitution in
\eqref{eq:Skv_dyadic} yields
\[
	2^nK_{2^n}^{\kappa}(x)
	=1+\sum_{i=0}^{n-1}4^i+\frac12\sum_{i=0}^{n-1}(4^i+2^i)
	=2^{2n-1}+2^{n-1}.
\]
Therefore $K_{2^n}^{\kappa}(x)=2^{n-1}+1/2$ on $I_n$, and hence
\[
	\int_{I_n}|K_{2^n}^{\kappa}|\,d\mu
	=\left(2^{n-1}+\frac12\right)2^{-n}
	=\frac12+\frac1{2^{n+1}}.
\]
\end{proof}

Define
\[
\mathcal E_n:=
\frac2{2^n}
\sum_{s=0}^{n-1}\sum_{j=s+1}^{n-1}
\mu(I_{j+1}(e_s+e_j))
\min\left\{4^s,-R_{s,n}|_{I_{j+1}(e_s+e_j)}\right\}.
\]

\begin{lemma}\label{Em_estimate}
For all $n\ge2$,
\[
	0\le\mathcal E_n\le\frac{7n}{2^{2n/3}}.
\]
\end{lemma}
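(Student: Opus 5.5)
We plan to prove the bound by making everything explicit with \eqref{R1} and the measure formula from Lemma~\ref{tail}(a), and then splitting the double sum at a threshold in $j$.

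\textbf{Nonnegativity.} Nonnegativity is immediate. Every term of $\mathcal E_n$ is a measure times the minimum of $4^s>0$ and $-R_{s,n}|_{I_{j+1}(e_s+e_j)}$, and the latter is positive by \eqref{R1}.

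\textbf{Explicit form.} For the upper bound, \eqref{R1} gives
\[
	-R_{s,n}|_{I_{j+1}(e_s+e_j)}=\frac{2^{2j-s-1}+2^s}{3},
\]
and Lemma~\ref{tail}(a) gives the measure $2^{-j-1}$. So each term is bounded by
\[
	2^{-j-1}\min\Bigl\{4^s,\frac{2^{2j-s-1}+2^s}{3}\Bigr\}.
\]
For fixed $s$ the second entry grows like $4^{j}/2^{s}$, so it exceeds $4^s$ once $j\gtrsim 3s/2$.

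\textbf{Splitting in $j$.} We use the first entry for large $j$ and the second for small $j$.
\begin{itemize}
\item For $j\ge 3s/2$ (roughly), we bound by $2^{-j-1}4^s$. Summing the geometric series in $j$ gives $\lesssim 4^s2^{-3s/2}=2^{s/2}$.
\item For $s+1\le j<3s/2$, we bound by $2^{-j-1}\cdot\frac{2^{2j-s-1}+2^s}{3}\lesssim 2^{j-s}+2^{s-j}$. The sum over this range is dominated by its largest $j$, again giving $\lesssim 2^{s/2}$.
\end{itemize}
In every regime, the inner sum for fixed $s$ is therefore $O(2^{s/2})$, and it also satisfies the trivial bound $\sum_{j>s}2^{-j-1}(2^{2j-s-1}+2^s)/3\lesssim 2^{n-s}$.

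\textbf{Balancing in $s$.} Summing $\min\{C2^{s/2},C'2^{n-s}\}$ over $s$, the crossover is at $s\approx 2n/3$, where the value is about $2^{n/3}$. Dividing by $2^n$, together with the factor $2$, yields a bound of order $2^{-2n/3}$. A factor of $n$ arises if we simply bound each of the $n$ terms in $s$ by the crossover value instead of summing geometrically, which is presumably how the constant $7n$ enters.

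\textbf{Main obstacle.} The main difficulty is bookkeeping with the integer parts of $3s/2$ and $2n/3$. The inner sum runs only up to $j\le n-1$, which cuts the first regime when $3s/2>n$. That cut is exactly what produces the $2^{n-s}$ behaviour, and it is where the $n$ factor is absorbed. To get an explicit constant, I will:
\begin{itemize}
\item bound each term by $2^{-j-1}\min\{4^s,4^{j}2^{-s}\}$, which is valid since $(2^{2j-s-1}+2^s)/3\le 4^j2^{-s}$ for $j>s$;
\item bound each summand $2^{-j-1}\min\{4^s,4^j2^{-s}\}$ by $\tfrac12\min\{2^{j-s},4^s2^{-j}\}\le\tfrac12 2^{s/2}$, using the elementary inequality $\min\{a,b\}\le\sqrt{ab}$;
\item combine this with the sum bound via $\min\{\cdot\}\le 2^{n/3}$ uniformly. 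This is the step where the factor $n$ (the number of $s$) and a small constant such as $7$ (from the geometric tails and the factor $2$) come out after crude estimation.
\end{itemize}
Checking that the constants indeed close at $7$ for all $n\ge2$, including small $n$ by direct evaluation if needed, is the routine but fiddly final step.
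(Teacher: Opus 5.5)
Your plan is correct, and for the main term it is the paper's argument. The paper also splits the inner sum at $j=\lfloor 3s/2\rfloor$ to get the bound $3\cdot 2^{s/2}$, uses $2^{n-s}$ for large $s$, and balances at $s\approx 2n/3$. There are two bookkeeping differences.

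First, your pointwise bound $(2^{2j-s-1}+2^s)/3\le 4^j2^{-s}$ for $j>s$ absorbs the term $2^s/3$. The paper instead uses $\min\{a,b+c\}\le\min\{a,b\}+c$ and treats a separate sum $\Sigma_{2,n}\le n/(3\cdot 2^n)$.

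Second, the paper sums geometrically in $s$ and gets $\Sigma_{1,n}\le 12\cdot 2^{-2n/3}$ with no factor $n$. In the paper, the $n$ in $7n$ comes only from the crude estimate $n/(3\cdot2^n)\le n2^{-2n/3}$ for $\Sigma_{2,n}$, followed by $12+n\le 7n$ for $n\ge2$. In your version the $n$ comes from bounding each of the $n$ values of $s$ by the crossover value. This loses a genuine factor $n$, but it avoids splitting the $s$-range, and the constant closes easily. Put $S(s):=\sum_{j=s+1}^{n-1}\min\{2^{2s-j},2^{j-s}\}$. Then $\mathcal E_n\le 2^{-n}\sum_{s=0}^{n-1}S(s)$ and $S(s)\le\min\{3\cdot 2^{s/2},2^{n-s}\}\le 3^{2/3}2^{n/3}$. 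Hence $\mathcal E_n\le 3^{2/3}\,n\,2^{-2n/3}$ for all $n\ge2$, and no separate check of small $n$ is needed.

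One caution about your final list of steps. The per-summand bound $\min\{2^{j-s},2^{2s-j}\}\le 2^{s/2}$, from $\min\{a,b\}\le\sqrt{ab}$, cannot replace the geometric summation in $j$. Summed over up to $n$ values of $j$, it only gives $S(s)\le n2^{s/2}$. Balancing this against $2^{n-s}$ yields only $\mathcal E_n\le n^{5/3}2^{-2n/3}$, which is weaker than the lemma. The $n$-independent bound $S(s)\le 3\cdot 2^{s/2}$ has to come from summing the two geometric series on either side of $j=\lfloor 3s/2\rfloor$, as in your ``splitting in $j$'' paragraph. That is the estimate to combine with $S(s)\le 2^{n-s}$.
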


\begin{proof}
The inequality $\mathcal E_n\ge0$ is immediate.  By Lemma~\ref{tail}(a) and
\eqref{R1},
\[
	\mu(I_{j+1}(e_s+e_j))=2^{-j-1},
	\qquad
	-R_{s,n}|_{I_{j+1}(e_s+e_j)}=\frac{2^{2j-s-1}+2^s}{3}.
\]
Since $\min\{a,b+c\}\le\min\{a,b\}+c$ for $a,b,c\ge0$,
\[
\begin{aligned}
	\min\left\{4^s,\frac{2^{2j-s-1}+2^s}{3}\right\}
	&\le
	\min\left\{2^{2s},\frac{2^{2j-s}}6\right\}+\frac{2^s}{3} \\
	&\le \min\{2^{2s},2^{2j-s}\}+\frac{2^s}{3}.
\end{aligned}
\]
Since
\[
	\frac2{2^n}\mu(I_{j+1}(e_s+e_j))
	=\frac2{2^n}2^{-j-1}
	=\frac1{2^n}2^{-j},
\]
we obtain $\mathcal E_n\le \Sigma_{1,n}+\Sigma_{2,n}$, where
\[
	\Sigma_{1,n}:=\frac1{2^n}\sum_{s=0}^{n-1}\sum_{j=s+1}^{n-1}
	\frac{\min\{2^{2s},2^{2j-s}\}}{2^j},
\]
and
\[
	\Sigma_{2,n}:=\frac1{3\cdot2^n}\sum_{s=0}^{n-1}2^s
	\sum_{j=s+1}^{n-1}2^{-j}.
\]
For $\Sigma_{2,n}$ we use $\sum_{j=s+1}^{n-1}2^{-j}\le2^{-s}$ and get
\[
	\Sigma_{2,n}\le\frac{n}{3\cdot2^n}\le\frac{n}{2^{2n/3}}.
\]

To estimate $\Sigma_{1,n}$, split the inner sum at $\lfloor3s/2\rfloor$.  If
$s\le2(n-1)/3$, then
\[
	\sum_{j=s+1}^{n-1}\frac{\min\{2^{2s},2^{2j-s}\}}{2^j}
	\le
	\sum_{j=s+1}^{\lfloor3s/2\rfloor}2^{j-s}
	+
	\sum_{j=\lfloor3s/2\rfloor+1}^{n-1}2^{2s-j}
	\le 3\cdot2^{s/2}.
\]
If $s>2(n-1)/3$, then
\[
	\sum_{j=s+1}^{n-1}\frac{\min\{2^{2s},2^{2j-s}\}}{2^j}
	=\sum_{j=s+1}^{n-1}2^{j-s}
	\le2^{n-s}.
\]
Consequently,
\[
	\Sigma_{1,n}\le\frac{12}{2^{2n/3}}.
\]
Combining the estimates gives
\[
	\mathcal E_n\le\frac{12+n}{2^{2n/3}}\le\frac{7n}{2^{2n/3}}
\]
for $n\ge2$.
\end{proof}

\begin{theorem}\label{T:exact_upper_limit}
Let $n\ge2$.  Then
\[
	\|K_{2^n}^{\kappa}\|_1=1+T_n-\mathcal E_n.
\]
Consequently,
\[
	\|K_{2^n}^{\kappa}\|_1\le1+T_n<\frac43.
\]
Moreover,
\[
	\lim_{n\to\infty}\|K_{2^n}^{\kappa}\|_1=\frac43.
\]
\end{theorem}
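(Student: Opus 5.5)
We split $G$ into $I_n$ and the shells $J_s$, $s=0,\dots,n-1$; these sets partition $G$. The contribution of $I_n$ is given by Lemma~\ref{main_mass_identity}. On each $J_s$ we use \eqref{ce} and Lemma~\ref{main_on_Es}, which give
\[
2^nK_{2^n}^{\kappa}=4^s+R_{s,n}.
\]
By Lemma~\ref{tail}(a), $J_s$ is the disjoint union of the pieces $I_{j+1}(e_s+e_j)$, $s<j\le n-1$, and $I_n(e_s)$. The function $R_{s,n}$ is constant on each piece. It is non-negative on $I_n(e_s)$ and negative on the other pieces. For a constant $a=4^s>0$ and a constant $b$ we use the elementary identity
\[
|a+b|=a+|b|-2\min\{a,|b|\}\qquad (b\le0),
\]
and for $b\ge0$ we simply have $|a+b|=a+b$. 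Integrating over $J_s$ and summing over $s$ gives
\[
\int_{G\setminus I_n}|K_{2^n}^\kappa|\,d\mu=\frac1{2^n}\sum_{s=0}^{n-1}4^s\mu(J_s)+T_n-\mathcal E_n .
\]
This is exactly how $T_n$ and $\mathcal E_n$ are defined, including the factor $2/2^n$ in $\mathcal E_n$.

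The main term is $\mu(J_s)=2^{-s-1}$, so
\[
\frac1{2^n}\sum_{s=0}^{n-1}2^{s-1}=\frac{2^n-1}{2^{n+1}}=\frac12-\frac1{2^{n+1}}.
\]
Adding $\frac12+\frac1{2^{n+1}}$ from Lemma~\ref{main_mass_identity} gives exactly $1$. This proves the identity $\|K_{2^n}^\kappa\|_1=1+T_n-\mathcal E_n$.

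For the inequality we use $\mathcal E_n\ge0$ (Lemma~\ref{Em_estimate}) together with the closed form of Lemma~\ref{tail}(b):
\[
T_n=\frac{(2^n-2)(2^n-1)}{3\cdot4^n}<\frac13 .
\]
The strict inequality holds because $(2^n-2)(2^n-1)<4^n$. For the limit, $T_n\to\frac13$, and Lemma~\ref{Em_estimate} gives $\mathcal E_n\le 7n\,2^{-2n/3}\to0$. Hence $\|K_{2^n}^\kappa\|_1\to\frac43$.

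The main point needing care is the sign bookkeeping in the identity step. One has to check that on the pieces where $R_{s,n}<0$ the correction is exactly $2\min\{4^s,-R_{s,n}\}$ times the measure, divided by $2^n$. One also has to check that on $I_n(e_s)$ no correction arises, since there $R_{s,n}\ge0$. Both facts come directly from the sign information in Lemma~\ref{tail}(a). The remaining bookkeeping is the normalization of $T_n$, which already includes the factor $1/2^n$.
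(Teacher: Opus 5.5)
Your proposal is correct and follows essentially the same proof as the paper. Both use the partition $G=I_n\cup\bigcup_s J_s$, the identity $2^nK_{2^n}^{\kappa}=4^s+R_{s,n}$ on $J_s$, and the elementary absolute-value identity with the sign information from Lemma~\ref{tail}(a); the bound and the limit then come from Lemma~\ref{tail}(b) and Lemma~\ref{Em_estimate}, and you additionally spell out the step $\mu(J_s)=2^{-s-1}$ that the paper leaves implicit.
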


\begin{proof}
By \eqref{ce} and Lemma~\ref{main_on_Es},
\[
	\int_{J_s}|K_{2^n}^{\kappa}|\,d\mu
	=\frac1{2^n}\int_{J_s}|4^s+R_{s,n}(x)|\,d\mu(x).
\]
Set
\[
	\Delta_{s,n}:=\int_{J_s}\left(4^s+|R_{s,n}(x)|-|4^s+R_{s,n}(x)|\right)d\mu(x).
\]
Then
\[
	\int_{J_s}|K_{2^n}^{\kappa}|\,d\mu
	=\frac1{2^n}\left(4^s\mu(J_s)+\int_{J_s}|R_{s,n}|\,d\mu-\Delta_{s,n}\right).
\]
The decomposition
\[
	G=I_n\cup\left(\bigcup_{s=0}^{n-1}J_s\right)
\]
is pairwise disjoint, and
\[
	\frac12+\frac1{2^{n+1}}+
	\sum_{s=0}^{n-1}\frac1{2^n}4^s\mu(J_s)=1.
\]
Using Lemma~\ref{main_mass_identity}, we get
\[
	\|K_{2^n}^{\kappa}\|_1
	=1+\sum_{s=0}^{n-1}\frac1{2^n}\int_{J_s}|R_{s,n}|\,d\mu
	-\frac1{2^n}\sum_{s=0}^{n-1}\Delta_{s,n}.
\]
For $a>0$ and $u\in\mathbb R$,
\[
	a+|u|-|a+u|=
	\begin{cases}
		0, & u\ge0,\\
		2\min\{a,|u|\}, & u<0.
	\end{cases}
\]
By Lemma~\ref{tail}(a), this gives
\[
	\Delta_{s,n}=2\sum_{j=s+1}^{n-1}\mu(I_{j+1}(e_s+e_j))
	\min\left\{4^s,-R_{s,n}|_{I_{j+1}(e_s+e_j)}\right\}.
\]
Hence
\[
	\frac1{2^n}\sum_{s=0}^{n-1}\Delta_{s,n}=\mathcal E_n,
\]
and therefore $\|K_{2^n}^{\kappa}\|_1=1+T_n-\mathcal E_n$.

By Lemma~\ref{tail}(b),
\[
	T_n=\frac{(2^n-2)(2^n-1)}{3\cdot4^n}<\frac13.
\]
Since $\mathcal E_n\ge0$, we obtain $\|K_{2^n}^{\kappa}\|_1<4/3$.  Finally,
\[
	1+T_n=\frac43-\frac1{2^n}+\frac2{3\cdot4^n}\to\frac43
	\qquad(n\to\infty),
\]
and Lemma~\ref{Em_estimate} gives $\mathcal E_n\to0$ as $n\to\infty$.  Thus
$\|K_{2^n}^{\kappa}\|_1\to4/3$ as $n\to\infty$.
\end{proof}

\begin{corollary}\label{third}
We have
\[
	\sup_{n\in\mathbb N}\|K_{2^n}^{\kappa}\|_1=\frac43.
\]
\end{corollary}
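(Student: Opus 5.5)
The plan is to combine the strict upper bound from Theorem~\ref{T:exact_upper_limit} with a direct check of the small indices $n=0,1$, which that theorem does not cover, and then to read off the supremum from the limit statement.

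First, the cases $n=0$ and $n=1$. By Remark~\ref{rem}, $\|K_1^{\kappa}\|_1=1$. For $n=1$, the block identity \eqref{block} gives $\kappa_0=w_0$ and $\kappa_1=w_1$, so $K_2^{\kappa}=K_2^w$. Remark~\ref{rem:Paley-dyadic} then gives $\|K_2^{\kappa}\|_1=1$. Alternatively, Lemma~\ref{main_mass_identity} yields $3/4$ on $I_1$, and Corollary~\ref{Skv_dyadic} with Lemma~\ref{main_on_Es} gives the value $1$ on $J_0$, with mass $1/4$. In either case both norms equal $1<4/3$.

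Second, for every $n\ge2$, Theorem~\ref{T:exact_upper_limit} gives $\|K_{2^n}^{\kappa}\|_1\le 1+T_n<4/3$. Hence $\|K_{2^n}^{\kappa}\|_1<4/3$ for all $n\in\mathbb N$, so the supremum is at most $4/3$. The same theorem gives $\lim_{n\to\infty}\|K_{2^n}^{\kappa}\|_1=4/3$, so the supremum is at least $4/3$.

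Therefore $\sup_{n\in\mathbb N}\|K_{2^n}^{\kappa}\|_1=4/3$. The supremum is not attained, and it coincides with the limit. There is no real obstacle here. The only point needing care is that Theorem~\ref{T:exact_upper_limit} assumes $n\ge2$, which is why the indices $n=0,1$ are handled separately above.
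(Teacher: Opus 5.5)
Your proposal is correct and follows essentially the paper's proof: you check the cases $n=0,1$ directly (both norms equal $1$), and for $n\ge2$ you use Theorem~\ref{T:exact_upper_limit}, which supplies both the strict bound $\|K_{2^n}^{\kappa}\|_1\le 1+T_n<4/3$ and the limit $4/3$. One small wording point in your alternative check for $n=1$: the value $1$ on $J_0$ is the value of $2K_2^{\kappa}=M_0$, so $K_2^{\kappa}=1/2$ there, and this indeed gives the contribution $1/4$.
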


\begin{proof}
For $n=0,1$ we have $\|K_{2^n}^{\kappa}\|_1=1$.  For $n\ge2$ the assertion
follows from Theorem~\ref{T:exact_upper_limit}.
\end{proof}

\subsection[Monotonicity along powers of two]
{Monotonicity of \texorpdfstring{$\|K_{2^n}^{\kappa}\|_1$}{the L1-norm along powers of two}}

In this section we prove that $(\|K_{2^n}^{\kappa}\|_1:n\in\mathbb N)$ is
non-decreasing and that it is strictly increasing from $n=2$ on.

\begin{lemma}\label{lem:correction_q}
For $j\in\mathbb P$ define
\[
	q_j:=\sum_{s=0}^{j-1}2^{-j}
	\min\left\{2^{2s},\frac{2^{2j-s-1}+2^s}{3}\right\}.
\]
Then, for every $n\ge2$,
\[
	\mathcal E_n=2^{-n}\sum_{j=1}^{n-1}q_j.
\]
Moreover,
\[
	q_j=\frac{2^{r_j}-2^{-r_j}+2^{j-2r_j}-2^{-j}}3,
	\qquad
	r_j:=\left\lceil\frac j3\right\rceil.
\]
\end{lemma}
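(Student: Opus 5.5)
The plan has two parts: first rewrite $\mathcal E_n$ by swapping the order of summation, then evaluate each $q_j$ by locating where the minimum switches.

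\emph{Reindexing.} Start from the definition of $\mathcal E_n$. By Lemma~\ref{tail}(a) and \eqref{R1},
\[
\mu(I_{j+1}(e_s+e_j))=2^{-j-1},
\qquad
-R_{s,n}|_{I_{j+1}(e_s+e_j)}=\frac{2^{2j-s-1}+2^s}{3}.
\]
Hence
\[
\mathcal E_n=\frac1{2^n}\sum_{s=0}^{n-1}\sum_{j=s+1}^{n-1}2^{-j}\min\Bigl\{2^{2s},\frac{2^{2j-s-1}+2^s}{3}\Bigr\}.
\]
The summation region is $0\le s<j\le n-1$. Summing over $j$ outermost, with $j$ from $1$ to $n-1$ and $s$ from $0$ to $j-1$, gives exactly $2^{-n}\sum_{j=1}^{n-1}q_j$. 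This step is purely formal.

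\emph{Locating the switch.} Fix $j$ and compare $2^{2s}$ with $(2^{2j-s-1}+2^s)/3$. Multiplying by $3$, the condition $2^{2s}\le(2^{2j-s-1}+2^s)/3$ becomes
\[
3\cdot2^{2s}-2^s\le2^{2j-s-1}.
\]
The left side is increasing in $s$ and the right side is decreasing, so there is a threshold. The minimum equals $2^{2s}$ for $s<r$ and equals the second expression for $s\ge r$, for some $r$. I would show that $r=r_j=\lceil j/3\rceil$ by checking the condition at $s=r_j-1$ and its failure at $s=r_j$.

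For $s=r_j-1$ we have $3s\le j-1$, so $2^{3s}\le2^{j-1}$, which gives $3\cdot2^{3s}\le 2^{j-1}+2\cdot 2^{3s}$. It is cleaner to verify directly that $3\cdot2^{3s}-2^{2s}\le2^{2j-1}\cdot2^{-s}\cdot2^{s}$. I would do this case by case on $j\bmod 3$, writing $j=3t$, $3t+1$ or $3t+2$. In each case both the inequality at $s=r_j-1$ and its strict failure at $s=r_j$ reduce to comparing small multiples of powers of $2$. The main care is needed at the boundary case $j\equiv0 \pmod 3$ with $s=j/3$: there $3\cdot4^s-2^s$ versus $2^{2j-s-1}=2^{5s-1}$ needs $s\ge1$, which holds since $j\ge1$. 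I expect this threshold verification, including small $j$ such as $j=1,2$, to be the main obstacle.

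\emph{Summing the geometric series.} Once the threshold is fixed, write $r=r_j$ and split
\[
q_j=2^{-j}\Bigl[\sum_{s=0}^{r-1}4^s+\frac13\sum_{s=r}^{j-1}\bigl(2^{2j-s-1}+2^s\bigr)\Bigr].
\]
The three sums are
\[
\sum_{s=0}^{r-1}4^s=\frac{4^r-1}{3},
\qquad
\sum_{s=r}^{j-1}2^{2j-s-1}=2^{2j-r}-2^{j},
\qquad
\sum_{s=r}^{j-1}2^s=2^j-2^r.
\]
Adding these gives
\[
q_j=\frac{2^{-j}}3\bigl(4^r-1+2^{2j-r}-2^r\bigr).
\]
Finally, I would reconcile this with the stated form $\bigl(2^{r}-2^{-r}+2^{j-2r}-2^{-j}\bigr)/3$. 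The terms $2^{j-r}$ and $-2^{-j}$ match immediately, but the remaining terms do not agree as I have them. So I would recheck the threshold: the correct formula likely places the switch so that the first block carries the weight $2^{2s}\cdot2^{-j}$ differently, or the closed form uses a substitution with $j-r$. I would adjust the split point accordingly until the closed form matches, testing $j=1,2,3$ numerically against the direct definition of $q_j$.
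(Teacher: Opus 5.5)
\textbf{There is a genuine gap: the switch point of the minimum is wrong.} Your reindexing is correct and is exactly what the paper does. Your monotonicity argument, that there is a single switch, is also fine. The problem is the location of the switch: it is not at $s=r_j=\lceil j/3\rceil$. Multiplying by $3\cdot2^{s}$, the condition for the minimum to equal $2^{2s}$ is $3\cdot2^{3s}-2^{2s}\le 2^{2j-1}$. So the switch lies near $3s\approx 2j$, not $3s\approx j$; your step ``$2^{3s}\le2^{j-1}$'' loses a factor $2$ in the exponent.

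Your own boundary check shows this. For $j=3s$ you verify $3\cdot4^s-2^s\le2^{5s-1}$, which says the minimum is still $2^{2s}$ at $s=r_j$. That contradicts the failure at $s=r_j$ you intended to prove.

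A concrete failure occurs at $j=3$. The three terms are $\min\{1,11\}=1$, $\min\{4,6\}=4$ and $\min\{16,4\}=4$, so $q_3=9/8$. Your split at $s=1$ gives $(1+6+4)/8=11/8$. Finally, ``adjust the split point until the closed form matches'' is not an argument, so the closed-form half of the lemma is unproved as written.

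\textbf{The fix.} The correct threshold is $s=j-r_j=\lfloor 2j/3\rfloor$. The paper makes this transparent with the substitution $d=j-s$, which turns the $s$-th summand of $q_j$ into
\[
\min\Bigl\{2^{j-2d},\frac{2^{d-1}+2^{-d}}3\Bigr\},\qquad 1\le d\le j.
\]
The inequality $\frac{2^{d-1}+2^{-d}}3\le2^{j-2d}$ is equivalent to $2^{3d-1}+2^d\le3\cdot2^j$. Its left side increases in $d$, and checking $j=3t,3t+1,3t+2$ shows it holds exactly for $d\le r_j$. So the minimum is $2^{2s}$ precisely for $s<j-r_j$.

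With the split point $r:=j-r_j$, your geometric-sum computation
\[
q_j=\frac{2^{-j}}3\bigl(4^r-1+2^{2j-r}-2^r\bigr)
\]
is correct as it stands. It gives $2^{2r-j}=2^{j-2r_j}$, $2^{j-r}=2^{r_j}$ and $2^{r-j}=2^{-r_j}$, which is exactly the stated formula $\frac13\bigl(2^{r_j}-2^{-r_j}+2^{j-2r_j}-2^{-j}\bigr)$.
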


\begin{proof}
Using the definition of $\mathcal E_n$ and \eqref{R1},
\[
\begin{aligned}
\mathcal E_n
&=\frac2{2^n}\sum_{s=0}^{n-1}\sum_{j=s+1}^{n-1}
2^{-j-1}\min\left\{4^s,\frac{2^{2j-s-1}+2^s}{3}\right\} \\
&=2^{-n}\sum_{j=1}^{n-1}\sum_{s=0}^{j-1}2^{-j}
\min\left\{2^{2s},\frac{2^{2j-s-1}+2^s}{3}\right\}
=2^{-n}\sum_{j=1}^{n-1}q_j.
\end{aligned}
\]

Put $d:=j-s$.  Then
\[
	q_j=\sum_{d=1}^{j}\min\left\{2^{j-2d},\frac{2^{d-1}+2^{-d}}3\right\}.
\]
The inequality
\[
	\frac{2^{d-1}+2^{-d}}3\le2^{j-2d}
\]
is equivalent to
\[
	2^{3d-1}+2^d\le3\cdot2^j.
\]
The left-hand side is increasing in $d$.  A check according to the residue class
of $j$ modulo $3$ shows that the inequality holds exactly when
$d\le r_j=\lceil j/3\rceil$.  Hence
\[
	q_j=
	\sum_{d=1}^{r_j}\frac{2^{d-1}+2^{-d}}3
	+
	\sum_{d=r_j+1}^{j}2^{j-2d}.
\]
Both sums are geometric.  This gives the formula.
\end{proof}

\begin{lemma}\label{lem:q_subadditive}
For every $j\ge3$,
\[
	q_j\le q_{j-1}+q_{j-2}.
\]
\end{lemma}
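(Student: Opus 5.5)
We will work directly with the closed form from Lemma~\ref{lem:correction_q},
\[
q_j=\frac{2^{r_j}-2^{-r_j}+2^{j-2r_j}-2^{-j}}3,\qquad r_j=\lceil j/3\rceil,
\]
and split according to the residue of $j$ modulo $3$. Writing $j=3t+\rho$ with $\rho\in\{0,1,2\}$, the triple $(r_j,r_{j-1},r_{j-2})$ is explicit in each case. If $j=3t$, it equals $(t,t,t)$. If $j=3t+1$, it equals $(t+1,t,t)$. If $j=3t+2$, it equals $(t+1,t+1,t)$. Substituting, every term becomes a multiple of $2^{\pm t}$ or $2^{-j}$. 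After multiplying by $3$, the inequality $q_j\le q_{j-1}+q_{j-2}$ becomes an inequality of the form
\[
A\,2^t+B\,2^{-t}+C\,2^{-j}\ge0
\]
with explicit constants $A,B,C$ depending only on $\rho$.

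We carry out the three cases in turn.

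For $j=3t$ (so $t\ge1$), we have $2^{j-2r_j}=2^t$. The terms for $j-1$ and $j-2$ give $2^{t-1}$ and $2^{t-2}$. The $2^t$-coefficient of the right side minus the left side is
\[
\bigl(1+\tfrac12\bigr)+\bigl(1+\tfrac14\bigr)-(1+1)=\tfrac34>0.
\]

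For $j=3t+1$, the left side has $2^{t+1}+2^{j-2t-2}=2^{t+1}+2^{t-1}$. The right side has $2^t+2^{t}$ from $j-1=3t$ and $2^t+2^{t-1}$ from $j-2=3t-1$, where $r_{j-2}=t$. The difference of leading coefficients is
\[
\tfrac72-\tfrac52=1>0.
\]

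For $j=3t+2$, the left side is $2^{t+1}+2^{t}$. The right side is $2^{t+1}+2^{t-1}$ from $j-1=3t+1$ and $2^t+2^t$ from $j-2=3t$. The difference of leading coefficients is
\[
\tfrac92-3=\tfrac32>0.
\]

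In each case the leading $2^t$-surplus is at least $\tfrac34\,2^t$. The remaining terms are $-2^{-r_{j-1}}-2^{-r_{j-2}}+2^{-r_j}$, which is $\ge -2\cdot 2^{-t}$, and the $2^{-j}$-terms, which contribute
\[
2^{-j}-2^{-j+1}-2^{-j+2}=-5\cdot2^{-j}.
\]
So it suffices to have
\[
\tfrac34\,2^t\ge 2^{1-t}+5\cdot2^{-j}.
\]
This holds once $t\ge2$, using $j\ge3t$.

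The main obstacle is the smallest indices, where $t$ is $0$ or $1$ and the crude bound fails or is tight. These are the cases $j=3,4,5,6,7$. The boundary case $j=3$ also involves $q_1$, for which $r_1=1$. For these we will compute $q_j$ exactly from the closed form, for example $q_1=\tfrac14$ and $q_2=\tfrac58$, and verify the inequality numerically. Alternatively, we can tighten the bound by keeping the exact negative terms instead of estimating them.
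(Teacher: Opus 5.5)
Your approach is the paper's: the closed form from Lemma~\ref{lem:correction_q}, a split by $j \bmod 3$ with the same triples $(r_j,r_{j-1},r_{j-2})$, and a termwise comparison. Your leading surpluses $\tfrac34 2^t$, $2^t$, $\tfrac32 2^t$ are exactly the leading terms of the paper's differences. The only problem is the finishing step, which is not carried out and contains a wrong computation.

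The paper does not estimate the lower-order terms. It keeps them exactly, and obtains
\[
3(q_{j-1}+q_{j-2}-q_j)=
\begin{cases}
\tfrac34 2^t-2^{-t}-5\cdot 2^{-3t}, & j=3t,\\
2^t-\tfrac32 2^{-t}-\tfrac52 2^{-3t}, & j=3t+1,\\
\tfrac32 2^t-2^{-t}-\tfrac54 2^{-3t}, & j=3t+2.
\end{cases}
\]
Each expression is increasing in $t$ and already positive at $t=1$ (values $\tfrac38$, $\tfrac{15}{16}$, $\tfrac{75}{32}$). So no separate small-case check is needed.

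Your crude version defers small $j$ to a numerical verification you never perform, and the sample values you quote are wrong. The correct values are $q_1=\tfrac12$ and $q_2=\tfrac34$, not $\tfrac14$ and $\tfrac58$, and $q_3=\tfrac98$. With your values the check at $j=3$ would fail, since $\tfrac14+\tfrac58=\tfrac78<\tfrac98$. With the correct ones, $q_1+q_2-q_3=\tfrac18>0$.

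The case list also needs correcting. For $j\ge3$ the case $t=0$ never occurs, and $r_1=1$ fits your pattern $(t,t,t)$ at $t=1$. Your crude inequality $\tfrac34 2^t\ge 2^{1-t}+5\cdot 2^{-j}$ already holds at $t=1$ once $j\ge4$. So only $j=3$ needs direct treatment, not $j=3,\dots,7$. Alternatively, keep the exact $j=3t$ term $-2^{-t}$ in place of $-2\cdot 2^{-t}$, which closes $j=3$ as well.
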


\begin{proof}
By Lemma~\ref{lem:correction_q}, according to the residue class of $j$ modulo
$3$,
\[
	q_{3r}=\frac{2^{r+1}-2^{-r}-2^{-3r}}3\qquad(r\ge1),
\]
\[
	q_{3r+1}=\frac{5\cdot2^{r-1}-2^{-r-1}-2^{-3r-1}}3\qquad(r\ge0),
\]
and
\[
	q_{3r+2}=\frac{3\cdot2^r-2^{-r-1}-2^{-3r-2}}3\qquad(r\ge0).
\]
If $j=3r$, then
\[
	q_{3r-1}+q_{3r-2}-q_{3r}
	=2^{r-2}-\frac1{3\cdot2^r}-\frac5{3\cdot2^{3r}}>0.
\]
If $j=3r+1$, then
\[
	q_{3r}+q_{3r-1}-q_{3r+1}
	=\frac{2^r}{3}-\frac1{2\cdot2^r}-\frac5{6\cdot2^{3r}}>0.
\]
If $j=3r+2$, then
\[
	q_{3r+1}+q_{3r}-q_{3r+2}
	=2^{r-1}-\frac1{3\cdot2^r}-\frac5{12\cdot2^{3r}}>0.
\]
This proves the lemma.
\end{proof}

\begin{theorem}\label{T:dyadic_monotonicity}
The sequence
\[
	(\|K_{2^n}^{\kappa}\|_1:n\in\mathbb N)
\]
is non-decreasing.  More precisely,
\[
	\|K_{2^0}^{\kappa}\|_1
	=\|K_{2^1}^{\kappa}\|_1
	=\|K_{2^2}^{\kappa}\|_1=1,
\]
and
\[
	\|K_{2^{n+1}}^{\kappa}\|_1>\|K_{2^n}^{\kappa}\|_1
	\qquad(n\ge2).
\]
\end{theorem}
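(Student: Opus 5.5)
The plan is to use the exact identity $\|K_{2^n}^{\kappa}\|_1=1+T_n-\mathcal E_n$ from Theorem~\ref{T:exact_upper_limit}, together with the representation $\mathcal E_n=2^{-n}\sum_{j=1}^{n-1}q_j$ from Lemma~\ref{lem:correction_q}. With these, the increment $\|K_{2^{n+1}}^{\kappa}\|_1-\|K_{2^n}^{\kappa}\|_1$ becomes an explicit expression in the $q_j$, and its sign can be controlled by Lemma~\ref{lem:q_subadditive}.

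\textbf{Small indices.} For $n=0$ we use Remark~\ref{rem}. For $n=1$ we have $K_2^{\kappa}=\frac12(D_1+D_2)$, which is non-negative by Lemma~\ref{Paley}, so $\|K_2^{\kappa}\|_1=\int_G K_2^{\kappa}\,d\mu=1$. For $n=2$ we apply Theorem~\ref{T:exact_upper_limit}. Lemma~\ref{tail}(b) gives $T_2=\frac{2\cdot 3}{3\cdot 16}=\frac18$. Lemma~\ref{lem:correction_q} with $r_1=1$ gives $q_1=\frac{2-\frac12+\frac12-\frac12}{3}=\frac12$, hence $\mathcal E_2=\frac14q_1=\frac18$. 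Therefore $\|K_4^{\kappa}\|_1=1$.

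\textbf{The increment for $n\ge2$.} Put $Q_m:=\sum_{j=1}^{m}q_j$, so that $\mathcal E_n=2^{-n}Q_{n-1}$. From
\[
	T_n=\frac13-\frac1{2^n}+\frac2{3\cdot4^n}
\]
we get
\[
	T_{n+1}-T_n=2^{-n-1}-2^{-2n-1}.
\]
For the correction term,
\[
	\mathcal E_{n+1}-\mathcal E_n=2^{-n-1}Q_n-2^{-n}Q_{n-1}=2^{-n-1}\left(q_n-Q_{n-1}\right).
\]
Hence
\[
	\|K_{2^{n+1}}^{\kappa}\|_1-\|K_{2^n}^{\kappa}\|_1
	=2^{-n-1}\left(1-2^{-n}-q_n+Q_{n-1}\right).
\]
Strict increase is therefore equivalent to $q_n-Q_{n-1}<1-2^{-n}$.

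\textbf{Signing the increment.} For $n=2$ the explicit formula gives $q_2=\frac{3-\frac12-\frac14}{3}=\frac34$. Then $q_2-Q_1=\frac14<\frac34=1-2^{-2}$. For $n\ge3$, Lemma~\ref{lem:q_subadditive} gives $q_n\le q_{n-1}+q_{n-2}$. Consequently
\[
	q_n-Q_{n-1}\le -Q_{n-3}\le0<1-2^{-n},
\]
since every $q_j\ge0$ and $Q_0=0$. This yields strict increase for all $n\ge2$. Together with the equalities at $n=0,1,2$, it also gives that the whole sequence is non-decreasing.

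The only delicate point is keeping the indices straight in $\mathcal E_{n+1}-\mathcal E_n$, namely that $\mathcal E_{n+1}$ involves $Q_n$ while $\mathcal E_n$ involves $Q_{n-1}$. Once that is done, the subadditivity lemma does all the work, so I do not expect a genuine obstacle.
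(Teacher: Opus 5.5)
Your proposal is correct and follows the paper's own proof: it uses $\|K_{2^n}^{\kappa}\|_1=1+T_n-\mathcal E_n$ and the increment formula $2^{-n-1}\bigl(1-2^{-n}+\sum_{j=1}^{n-1}q_j-q_n\bigr)$. As in the paper, you then apply Lemma~\ref{lem:q_subadditive} for $n\ge3$ and the explicit values $q_1=\frac12$, $q_2=\frac34$, $T_2=\frac18$ for the initial cases. The only cosmetic difference is that you sign the $n=2$ increment through $q_2-q_1=\frac14<\frac34$, whereas the paper computes $\|K_8^{\kappa}\|_1=\frac{17}{16}$ directly.
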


\begin{proof}
By Theorem~\ref{T:exact_upper_limit}, for $n\ge2$,
\[
	\|K_{2^n}^{\kappa}\|_1=1+T_n-\mathcal E_n,
	\qquad
	T_n=\frac{(2^n-2)(2^n-1)}{3\cdot4^n}.
\]
A direct calculation gives
\[
	T_{n+1}-T_n=\frac1{2^{n+1}}\left(1-\frac1{2^n}\right).
\]
By Lemma~\ref{lem:correction_q},
\[
	\mathcal E_n=2^{-n}\sum_{j=1}^{n-1}q_j,
\]
and hence
\[
	\mathcal E_{n+1}-\mathcal E_n
	=2^{-(n+1)}\left(q_n-\sum_{j=1}^{n-1}q_j\right).
\]
Therefore
\[
\begin{aligned}
	\|K_{2^{n+1}}^{\kappa}\|_1-\|K_{2^n}^{\kappa}\|_1
	=\frac1{2^{n+1}}\left(1-\frac1{2^n}+\sum_{j=1}^{n-1}q_j-q_n\right).
\end{aligned}
\]
For $n\ge3$, Lemma~\ref{lem:q_subadditive} gives
$q_n\le q_{n-1}+q_{n-2}\le\sum_{j=1}^{n-1}q_j$.  Thus
\[
	\|K_{2^{n+1}}^{\kappa}\|_1-\|K_{2^n}^{\kappa}\|_1
	\ge\frac1{2^{n+1}}\left(1-\frac1{2^n}\right)>0.
\]
The initial values follow from $\|K_1^{\kappa}\|_1=\|K_2^{\kappa}\|_1=1$ and from
$q_1=1/2$, $T_2=1/8$, $\mathcal E_2=2^{-2}q_1=1/8$.  Also
$q_2=3/4$, $T_3=7/32$, and $\mathcal E_3=2^{-3}(q_1+q_2)=5/32$, so
\[
	\|K_{2^3}^{\kappa}\|_1=1+\frac7{32}-\frac5{32}=\frac{17}{16}>1.
\]
\end{proof}

The first values of the sequence along powers of two are listed in Table~\ref{L1-first22}. They agree with the monotonicity proved in Theorem~\ref{T:dyadic_monotonicity} and illustrate the convergence to the sharp supremum along powers of two, namely $4/3$.
\begin{table}[ht]
\centering
\setlength{\tabcolsep}{14pt}
\renewcommand{\arraystretch}{1.25}
\begin{tabular}{r@{\qquad}r@{\qquad}c@{\qquad}c@{\qquad\qquad}r@{\qquad}r@{\qquad}c@{\qquad}c}
\cmidrule(l{1.2em}r{4em}){1-4}\cmidrule(l{0em}r{1.2em}){5-8}
$n$ & $2^n$ & $\|K^{\kappa}_{2^n}\|_1$ & $\approx$
& $n$ & $2^n$ & $\|K^{\kappa}_{2^n}\|_1$ & $\approx$\\
\cmidrule(l{1.2em}r{4em}){1-4}\cmidrule(l{0em}r{1.2em}){5-8}
0  & 1       & $1$                      & 1.000000  & 11 & 2048    & $\tfrac{43233}{32768}$     & 1.319366 \\
1  & 2       & $1$                      & 1.000000  & 12 & 4096    & $\tfrac{21699}{16384}$     & 1.324402 \\
2  & 4       & $1$                      & 1.000000  & 13 & 8192    & $\tfrac{174007}{131072}$   & 1.327568 \\
3  & 8       & $\tfrac{17}{16}$         & 1.062500  & 14 & 16384   & $\tfrac{697113}{524288}$   & 1.329638 \\
4  & 16      & $\tfrac{9}{8}$           & 1.125000  & 15 & 32768   & $\tfrac{348913}{262144}$   & 1.330997 \\
5  & 32      & $\tfrac{151}{128}$       & 1.179688  & 16 & 65536   & $\tfrac{2793071}{2097152}$ & 1.331840 \\
6  & 64      & $\tfrac{157}{128}$       & 1.226562  & 17 & 131072  & $\tfrac{11176841}{8388608}$& 1.332383 \\
7  & 128     & $\tfrac{645}{512}$       & 1.259766  & 18 & 262144  & $\tfrac{5589901}{4194304}$ & 1.332736 \\
8  & 256     & $\tfrac{2629}{2048}$     & 1.283691  & 19 & 524288  & $\tfrac{44726495}{33554432}$& 1.332953 \\
9  & 512     & $\tfrac{333}{256}$       & 1.300781  & 20 & 1048576 & $\tfrac{178924649}{134217728}$& 1.333093 \\
10 & 1024    & $\tfrac{10747}{8192}$    & 1.311890 & 21 & 2097152 & $\tfrac{89468357}{67108864}$& 1.333182 \\
\cmidrule(l{1.2em}r{4em}){1-4}\cmidrule(l{0em}r{1.2em}){5-8}
\end{tabular}
\caption{The first 22 terms of the sequence $(\|K_{2^n}^{\kappa}\|_1:n\in\mathbb N)$.}
\label{L1-first22}
\end{table}

\section[The global supremum]{The global supremum \texorpdfstring{$\sup_{n\in\mathbb P}\|K_n^{\kappa}\|_1$}{the global supremum of the L1-norms}}

The numerical behaviour of the full sequence
$(\|K_n^{\kappa}\|_1:n\in\mathbb P)$ for the first values of $n$ is shown in Figure~\ref{L1-32768}. The figure is
included only as a numerical illustration; the exact value of the global supremum is proved below.

\begin{figure}[ht]
\centering
\begin{tikzpicture}[font=\small]
\draw[ultra thin,color=gray] (-0.1,-0.1) grid[xstep=1, ystep=0.960077915605] (8.1,4.1);
\draw[->] (-0.3,0) -- (8.35,0) node[below right] {$n$};
\draw[->] (0,-0.3) -- (0,4.35) node[above left] {$\|K^{\kappa}_n\|_{1}$};

\foreach \x/\lab in {0/0,1/4096,2/8192,3/12288,4/16384,5/20480,6/24576,7/28672,8/32768} {
	\draw (\x,0) -- (\x,-0.12);
	\node[below] at (\x,-0.12) {\scriptsize \textnormal{\lab}};
}

\draw (-0.12,0.000000000000) -- (0,0.000000000000);
\node[left] at (-0.12,0.000000000000) {\scriptsize \textnormal{1.0}};
\draw (-0.12,0.960077915605) -- (0,0.960077915605);
\node[left] at (-0.12,0.960077915605) {\scriptsize \textnormal{1.1}};
\draw (-0.12,1.920155831210) -- (0,1.920155831210);
\node[left] at (-0.12,1.920155831210) {\scriptsize \textnormal{1.2}};
\draw (-0.12,2.880233746814) -- (0,2.880233746814);
\node[left] at (-0.12,2.880233746814) {\scriptsize \textnormal{1.3}};
\draw (-0.12,3.840311662419) -- (0,3.840311662419);
\node[left] at (-0.12,3.840311662419) {\scriptsize \textnormal{1.4}};

\InputIfFileExists{data.tex}{}{}
\end{tikzpicture}
\caption{Numerically computed values of $\|K^{\kappa}_{n}\|_{1}$ for $n=1,\dots,32768$.}
\label{L1-32768}
\end{figure}

\subsection{Exact splitting for general indices}

For $m\in\mathbb N$ and $0\le v\le m-1$, define
\begin{equation}\label{eq:Qml-def}
	Q_{m, v}:=4^{ v}+\frac{2^{2m- v-2}-2^{ v}}3.
\end{equation}
The analysis above gives the following atom values.  On $I_m$,
\begin{equation}\label{eq:F-on-Im}
	2^mK_{2^m}^{\kappa}(x)=2^{2m-1}+2^{m-1},
\end{equation}
while, for $0\le v\le m-1$,
\begin{equation}\label{eq:F-on-singletons}
	2^mK_{2^m}^{\kappa}(x)=Q_{m, v},
	\qquad x\in I_m(e_ v).
\end{equation}
For $0\le s<j\le m-1$,
\begin{equation}\label{eq:F-on-Esj}
	2^mK_{2^m}^{\kappa}(x)
	=4^s-\frac{2^{2j-s-1}+2^s}{3},
	\qquad x\in I_{j+1}(e_s+e_j).
\end{equation}
The first identity is Lemma~\ref{main_mass_identity} at the pointwise level.
The last two follow from Lemma~\ref{main_on_Es} and formulas
\eqref{R1}--\eqref{R2}.

Throughout this subsection, if $n\in\mathbb P$ is not a power of two, we write
\[
	k:=n-2^{|n|}.
\]
Then $1\le k<2^{|n|}$, and Corollary~\ref{sec} gives
\begin{equation}\label{eq:Skvortsov-split}
	nK_n^{\kappa}(x)
	=
	2^{|n|}K_{2^{|n|}}^{\kappa}(x)
	+kD_{2^{|n|}}(x)
	+kr_{|n|}(x)K_k^w(\tau_{|n|}(x)).
\end{equation}
Since $k<2^{|n|}$, the Walsh--Paley polynomial $kK_k^w$ depends only on the
first $|n|$ coordinates.  Hence $kK_k^w(\tau_{|n|}(x))$ is independent of
$x_{|n|}$.  The same is true of
\[
	2^{|n|}K_{2^{|n|}}^{\kappa}(x)+kD_{2^{|n|}}(x).
\]
We average with respect to the coordinate $x_{|n|}$.  This gives the following
exact max identity.

\begin{lemma}\label{L:exact-max}
Let $n\in\mathbb P$ be not a power of two, and let $k$ be as above.  Then
\begin{equation}\label{eq:exact-max}
	n\|K_n^{\kappa}\|_1
	=
	\int_G
	\max\left(
	|2^{|n|}K_{2^{|n|}}^{\kappa}(x)+kD_{2^{|n|}}(x)|,
	|kK_k^w(\tau_{|n|}(x))|
	\right)d\mu(x).
\end{equation}
\end{lemma}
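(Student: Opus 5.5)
We will prove the identity by averaging over the single coordinate $x_{|n|}$, using the elementary fact that $\tfrac12(|A+B|+|A-B|)=\max(|A|,|B|)$ for real $A,B$. Write $m:=|n|$ and put
\[
A(x):=2^{m}K_{2^{m}}^{\kappa}(x)+kD_{2^{m}}(x),\qquad B(x):=kK_k^w(\tau_m(x)).
\]
By \eqref{eq:Skvortsov-split} we have $nK_n^\kappa(x)=A(x)+r_m(x)B(x)$. Since $n>0$, it follows that $n\|K_n^\kappa\|_1=\int_G|A+r_mB|\,d\mu$.

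First we check that $A$ and $B$ do not depend on $x_m$. For $D_{2^m}$ this is Lemma~\ref{Paley}. For $2^mK_{2^m}^\kappa$ it holds because this function lies in $\mathcal P_{2^m}$, and every Walsh function of index less than $2^m$ involves only $r_0,\dots,r_{m-1}$. For $B$, the polynomial $kK_k^w$ has order $k<2^m$, so it depends only on the coordinates $0,\dots,m-1$ of its argument. The first $m$ coordinates of $\tau_m(x)$ are $x_{m-1},\dots,x_0$, so $B$ depends only on $x_0,\dots,x_{m-1}$.

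Next we use the product structure of $\mu$. Write $x=(x',x_m,x'')$ with $x'=(x_0,\dots,x_{m-1})$. The map $x\mapsto x+e_m$ preserves $\mu$, fixes $A$ and $B$, and changes $r_m$ to $-r_m$. Therefore
\[
\int_G|A+r_mB|\,d\mu=\frac12\int_G\bigl(|A+r_mB|+|A-r_mB|\bigr)\,d\mu=\frac12\int_G\bigl(|A+B|+|A-B|\bigr)\,d\mu.
\]
Equivalently, we can split $G$ into the sets $\{x_m=0\}$ and $\{x_m=1\}$, each of measure $1/2$, on which $r_m$ takes the values $1$ and $-1$.

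Finally we apply the pointwise identity $\tfrac12(|a+b|+|a-b|)=\max(|a|,|b|)$. To verify it, assume without loss of generality $|a|\ge|b|$. Then $a+b$ and $a-b$ both have the sign of $a$ (or vanish), so the sum of their absolute values is $2|a|$. This gives \eqref{eq:exact-max}.

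The proof contains no real obstacle. The only point that needs care is justifying that $2^mK^\kappa_{2^m}$ and $kK_k^w\circ\tau_m$ are independent of $x_m$. The remark preceding the lemma already provides this, via \eqref{block}, the definition of $\tau_m$, and the fact that $\tau_m$ leaves coordinates of index $\ge m$ in place.
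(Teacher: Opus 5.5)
Your proposal is correct and follows the paper's own argument: the paper likewise notes that $A$ and $B$ do not depend on $x_{|n|}$, averages over the two values $r_{|n|}=\pm1$, and applies $\tfrac12(|a+b|+|a-b|)=\max(|a|,|b|)$. You use the measure-preserving translation $x\mapsto x+e_m$ where the paper fixes the other coordinates, which is only a cosmetic difference, and your justification that $A$ and $B$ are independent of $x_m$ is somewhat more explicit than the paper's.
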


\begin{proof}
Fix all coordinates of $x$ except $x_{|n|}$.  Then
\[
	a:=2^{|n|}K_{2^{|n|}}^{\kappa}(x)+kD_{2^{|n|}}(x),
	\qquad
	b:=kK_k^w(\tau_{|n|}(x))
\]
are constants, while $r_{|n|}(x)=\pm1$.  Hence
\[
	\frac12(|a+b|+|a-b|)=\max(|a|,|b|).
\]
Integrating over the remaining coordinates gives \eqref{eq:exact-max}.
\end{proof}

For $m\in\mathbb N$ and $k\in\mathbb P$ with $k<2^m$, define
\begin{equation}\label{eq:def-G}
	\mathcal G_m(k):=
	\int_G
	\min\left(
	|2^mK_{2^m}^{\kappa}(x)+kD_{2^m}(x)|,
	|kK_k^w(\tau_m(x))|
	\right)d\mu(x).
\end{equation}

The exact max identity gives the following gain formula.

\begin{lemma}\label{L:gain-formula}
Let $n\in\mathbb P$ be not a power of two, and let $k$ be as above.  Then
\begin{equation}\label{eq:gain-formula}
	n\|K_n^{\kappa}\|_1
	=
	2^{|n|}\|K_{2^{|n|}}^{\kappa}\|_1
	+k
	+k\|K_k^w\|_1
	-\mathcal G_{|n|}(k).
\end{equation}
\end{lemma}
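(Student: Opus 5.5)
The plan is to start from the exact max identity of Lemma~\ref{L:exact-max} and use the elementary identity $\max(|a|,|b|)=|a|+|b|-\min(|a|,|b|)$ pointwise. Integrating over $G$ and using \eqref{eq:def-G} with $m=|n|$ gives
\[
	n\|K_n^{\kappa}\|_1
	=
	\int_G|2^{|n|}K_{2^{|n|}}^{\kappa}+kD_{2^{|n|}}|\,d\mu
	+\int_G|kK_k^w(\tau_{|n|}(x))|\,d\mu(x)
	-\mathcal G_{|n|}(k).
\]
It therefore remains to evaluate the first two integrals.

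For the second integral, $\tau_{|n|}$ is measure-preserving, so it equals $k\|K_k^w\|_1$.

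For the first integral, split $G=I_{|n|}\cup(G\setminus I_{|n|})$. Off $I_{|n|}$, Lemma~\ref{Paley} gives $D_{2^{|n|}}=0$, so the integrand is $2^{|n|}|K_{2^{|n|}}^{\kappa}|$. On $I_{|n|}$, \eqref{eq:F-on-Im} (that is, Lemma~\ref{main_mass_identity} at the pointwise level) shows that $2^{|n|}K_{2^{|n|}}^{\kappa}=2^{2|n|-1}+2^{|n|-1}>0$. Also $kD_{2^{|n|}}=k2^{|n|}>0$ there, so the absolute value splits additively. Hence
\[
	\int_{I_{|n|}}|2^{|n|}K^{\kappa}_{2^{|n|}}+kD_{2^{|n|}}|\,d\mu
	=\int_{I_{|n|}}2^{|n|}|K^{\kappa}_{2^{|n|}}|\,d\mu+k\cdot 2^{|n|}\mu(I_{|n|}),
\]
and the last term equals $k$. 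Adding the two pieces gives $2^{|n|}\|K_{2^{|n|}}^{\kappa}\|_1+k$. Substituting this back yields \eqref{eq:gain-formula}.

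There is no real obstacle. The only point needing care is the positivity of $2^{|n|}K_{2^{|n|}}^{\kappa}$ on $I_{|n|}$, which is what lets the $kD_{2^{|n|}}$ term contribute exactly $k$. This positivity holds for all $|n|\ge0$ by \eqref{eq:F-on-Im}, and by Remark~\ref{rem} when $|n|=0$.
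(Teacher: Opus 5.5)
Your proof is correct and follows the paper's argument essentially step for step. You rewrite the exact max identity using $\max(|A|,|B|)=|A|+|B|-\min(|A|,|B|)$, use that $\tau_{|n|}$ is measure-preserving, and use the positivity of $2^{|n|}K^{\kappa}_{2^{|n|}}$ on $I_{|n|}$ to get the extra contribution exactly $k$. One minor point: since $n$ is not a power of two, $|n|\ge1$, so the separate remark about the case $|n|=0$ is unnecessary.
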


\begin{proof}
Use
\[
	\max(|A|,|B|)=|A|+|B|-\min(|A|,|B|)
\]
in \eqref{eq:exact-max}.  Since $\tau_{|n|}$ is measure-preserving,
\[
	\int_G|kK_k^w(\tau_{|n|}(x))|\,d\mu(x)=k\|K_k^w\|_1.
\]
Moreover,
\[
\begin{aligned}
	&\int_G|2^{|n|}K_{2^{|n|}}^{\kappa}(x)+kD_{2^{|n|}}(x)|\,d\mu(x) \\
	&\qquad=2^{|n|}\|K_{2^{|n|}}^{\kappa}\|_1+k.
\end{aligned}
\]
Indeed, $D_{2^{|n|}}$ is supported on $I_{|n|}$ and equals $2^{|n|}$ there.
Also, $2^{|n|}K_{2^{|n|}}^{\kappa}$ is positive on $I_{|n|}$ by
\eqref{eq:F-on-Im}.  Hence the added term contributes
$k2^{|n|}\mu(I_{|n|})=k$.  This proves \eqref{eq:gain-formula}.
\end{proof}

\subsection{The principal gain}

We keep the part of $\mathcal G_m(k)$ which comes from $I_m$ and from the
one-bit atoms $I_m(e_ v)$, $0\le v\le m-1$.

On $I_m$ one has $D_{2^m}=2^m$.  Moreover, the first $m$
coordinates of $\tau_m(x)$ are zero.  Since $k<2^m$, the polynomial
$kK_k^w$ depends only on the first $m$ coordinates, and hence
\[
kK_k^w(\tau_m(x))=kK_k^w(0)=\frac{k(k+1)}2.
\]

Moreover, by \eqref{eq:F-on-Im},
\[
	|2^mK_{2^m}^{\kappa}(x)+kD_{2^m}(x)|>|kK_k^w(0)|,
	\qquad x\in I_m.
\]
Thus the contribution of $I_m$ to $\mathcal G_m(k)$ is
\[
	\frac1{2^m}\frac{k(k+1)}2.
\]

For $0\le v\le m-1$, the function $D_{2^m}$ vanishes on $I_m(e_ v)$.  By
\eqref{eq:F-on-singletons},
\[
	2^mK_{2^m}^{\kappa}(x)=Q_{m, v},
	\qquad x\in I_m(e_ v).
\]
Furthermore,
\[
	\tau_m(I_m(e_ v))=I_m(e_{m-1- v}).
\]
Since $k<2^m$, the Walsh--Paley polynomial $kK_k^w$ depends only on the first
$m$ coordinates. Hence it is constant on each interval $I_m(a)$, $a\in G$.
Therefore
\[
kK_k^w(\tau_m(x))=kK_k^w(e_{m-1- v}),
\qquad x\in I_m(e_ v).
\]
Define the principal gain by
\begin{equation}\label{eq:def-A}
	\mathcal A_m(k)
	:=\frac1{2^m}\frac{k(k+1)}2+
	\frac1{2^m}\sum_{ v=0}^{m-1}
	\min\left(Q_{m, v},|kK_k^w(e_{m-1- v})|\right).
\end{equation}

The following lemma gives a lower bound by keeping only these principal atoms.

\begin{lemma}\label{L:principal-gain}
For every $m\in\mathbb N$ and every $k\in\mathbb P$ with $k<2^m$,
\[
	\mathcal G_m(k)\ge\mathcal A_m(k).
\]
\end{lemma}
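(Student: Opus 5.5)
The plan is to note that the integrand in \eqref{eq:def-G} is non-negative. Then $\mathcal G_m(k)$ is at least the integral of that integrand over any measurable subset of $G$. We take the subset
\[
	I_m\cup\bigcup_{v=0}^{m-1}I_m(e_v).
\]
These $m+1$ sets are pairwise disjoint. Indeed, each is the set of $x$ with a prescribed value of $(x_0,\dots,x_{m-1})$, and the prescribed words (all zeros, or a single $1$ in position $v$) are distinct. Each set has measure $2^{-m}$. We then drop the integral over the complement, which is non-negative. This leaves the sum of the contributions of the individual atoms.

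Next we evaluate the integrand exactly on each atom, using the facts gathered in the discussion preceding \eqref{eq:def-A}. On $I_m$ we have $D_{2^m}=2^m$ and, by \eqref{eq:F-on-Im},
\[
	2^mK_{2^m}^{\kappa}=2^{2m-1}+2^{m-1}.
\]
Since $kK_k^w$ depends only on the first $m$ coordinates and $\tau_m(I_m)\subset I_m$, we get $kK_k^w(\tau_m(x))=kK_k^w(0)=k(k+1)/2$. Here $K_k^w(0)=(k+1)/2$ because every $w_j(0)=1$. The comparison
\[
	2^{2m-1}+2^{m-1}+k2^m>\frac{k(k+1)}2
\]
holds since $k<2^m$. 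So the minimum on $I_m$ equals $k(k+1)/2$, and its integral is $2^{-m}k(k+1)/2$.

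On $I_m(e_v)$ we have $D_{2^m}=0$ by Lemma~\ref{Paley}, and $2^mK_{2^m}^{\kappa}=Q_{m,v}$ by \eqref{eq:F-on-singletons}. This value is positive, so its absolute value is $Q_{m,v}$. Also $\tau_m$ reverses the first $m$ coordinates, so $\tau_m(x)\in I_m(e_{m-1-v})$ and $kK_k^w(\tau_m(x))=kK_k^w(e_{m-1-v})$. The integrand is therefore the constant $\min(Q_{m,v},|kK_k^w(e_{m-1-v})|)$ on a set of measure $2^{-m}$.

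Summing the $m+1$ contributions gives exactly $\mathcal A_m(k)$ as defined in \eqref{eq:def-A}, which proves the lemma. The only points needing care are the positivity of $Q_{m,v}$, which holds because $2^{2m-v-2}\ge2^v$ for $v\le m-1$, and the identification $\tau_m(I_m(e_v))=I_m(e_{m-1-v})$; both are immediate from the definitions. For $m=0$ there is no admissible $k$, so the statement is vacuous.
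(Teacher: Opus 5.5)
Your proof is correct and is essentially the paper's argument: you restrict the non-negative integrand of $\mathcal G_m(k)$ to the pairwise disjoint atoms $I_m$ and $I_m(e_v)$, $0\le v\le m-1$, where it is constant and equal to the corresponding term of $\mathcal A_m(k)$. Your explicit checks (the comparison on $I_m$, the positivity of $Q_{m,v}$, and $\tau_m(I_m(e_v))=I_m(e_{m-1-v})$) spell out the facts the paper records just before \eqref{eq:def-A}.
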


\begin{proof}
The atoms $I_m$ and $I_m(e_ v)$, $0\le v\le m-1$, are pairwise disjoint.  On
each of them the integrand defining $\mathcal G_m(k)$ is constant and equal to
the corresponding term in \eqref{eq:def-A}.  Since the integrand is
non-negative, the sum over these atoms is a lower bound for the whole integral.
\end{proof}

\subsection{The lower sharpness sequence}

Let
\begin{equation}\label{eq:def-km}
	k_m:=\left\lceil\frac{2^{m+1}}3\right\rceil.
\end{equation}
This sequence has an asymptotically alternating binary expansion.

We first record the Walsh--Paley asymptotics along this sequence.

\begin{lemma}\label{L:alternating-Paley}
For $k_m$ given by \eqref{eq:def-km}, we have, as $m\to\infty$,
\[
	\|K_{k_m}^w\|_1\to\frac{17}{15},
	\qquad
	\frac{k_m\|K_{k_m}^w\|_1}{2^m}\to\frac{34}{45}.
\]
\end{lemma}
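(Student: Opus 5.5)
The plan is to identify $k_m$ with a shift by one of Toledo's alternating extremal index $N_{m-1}$ from the proof of Lemma~\ref{L:Toledo-Paley-estimates}. Then Toledo's explicit formula is transferred from $N_{m-1}$ to $k_m$ using a Dirichlet kernel estimate.

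First I check the identity $k_m=N_{m-1}+1$ for $m\ge2$. If $m-1=2s$ is even, then $m+1$ is even, so $2^{m+1}\equiv1\pmod 3$. Hence
\[
k_m=\frac{2^{m+1}+2}{3}=\frac{4^{s+1}-1}{3}+1=N_{m-1}+1.
\]
If $m-1=2s+1$ is odd, then $2^{m+1}\equiv2\pmod3$. Hence
\[
k_m=\frac{2^{m+1}+1}{3}=\frac{2(4^{s+1}-1)}{3}+1=N_{m-1}+1.
\]
The two explicit formulas for $\|K^w_{N_{m-1}}\|_1$ quoted in the proof of Lemma~\ref{L:Toledo-Paley-estimates} both have the form $\frac{17}{15}+O\bigl((s+1)4^{-s}\bigr)$. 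Therefore $\|K^w_{N_{m-1}}\|_1\to\frac{17}{15}$.

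Next, to pass from $N:=N_{m-1}$ to $N+1$, I use the identity
\[
(N+1)K^w_{N+1}=NK^w_N+D^w_{N+1},
\]
which follows directly from the definition of the Fej\'er kernel. The triangle inequality then gives
\[
\bigl|(N+1)\|K^w_{N+1}\|_1-N\|K^w_N\|_1\bigr|\le\|D^w_{N+1}\|_1.
\]
I bound the right-hand side by the standard estimate $\|D^w_n\|_1\le |n|+1$. This estimate follows by writing $D^w_n=w_n\sum_{j}n_j r_j D_{2^j}$, or from the usual decomposition of $n$ into dyadic blocks together with Lemma~\ref{Paley}, since each block contributes $L_1$-norm at most $1$. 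This yields $\|D^w_{N+1}\|_1=O(m)$. Dividing by $N+1\asymp 2^m$, I obtain
\[
\|K^w_{k_m}\|_1=\frac{N}{N+1}\|K^w_N\|_1+O(m2^{-m})\to\frac{17}{15}.
\]

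Finally, $k_m/2^m\to\frac23$, so $k_m\|K^w_{k_m}\|_1/2^m\to\frac23\cdot\frac{17}{15}=\frac{34}{45}$. The main point requiring care is the first step: the residue bookkeeping that identifies $k_m$ with $N_{m-1}+1$, and the correct matching of the parity cases of Toledo's formula, since the block index of $k_m$ is $m-1$ rather than $m$. The Dirichlet-kernel bound is routine. If only a weaker bound such as $\|D^w_n\|_1=o(n)$ were available, it would still suffice.
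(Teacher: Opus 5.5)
Your proposal is correct and follows essentially the same route as the paper: you identify $k_m=N_{m-1}+1$ with Toledo's alternating index, take the limit $17/15$ at $N_{m-1}$ from Toledo, and absorb the one-step shift via $(N+1)K^w_{N+1}=NK^w_N+D^w_{N+1}$ together with the logarithmic bound $\|D^w_n\|_1\le |n|+1$. The differences are cosmetic: you read the limit off the explicit formulas quoted in the proof of Lemma~\ref{L:Toledo-Paley-estimates} instead of citing Toledo's Theorem~6, and you check the residue bookkeeping for $k_m=N_{m-1}+1$ explicitly, which the paper only asserts.
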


\begin{proof}
We use Toledo's extremal theorem for the Walsh--Paley Fej\'er kernels.  Let
\[
	a_p:=
	\begin{cases}
	1+2^2+\cdots+2^p, & p \text{ even},\\
	2+2^3+\cdots+2^p, & p \text{ odd}.
	\end{cases}
\]
By \cite[Theorem~6]{Tol}, as $p\to\infty$,
\[
	\|K_{a_p}^w\|_1\to\frac{17}{15}.
\]
For $m\ge2$, the index $a_{m-1}$ is the alternating index immediately before
$k_m$; more precisely,
\[
	k_m=a_{m-1}+1.
\]
This follows by summing the appropriate finite geometric series, according to
the parity of $m$.

It remains to note that increasing the index by one does not change the
asymptotic $L_1$-norm.  Since
\[
	K_{n+1}^w=\frac{nK_n^w+D_{n+1}^w}{n+1},
\]
we have
\[
	K_{n+1}^w-K_n^w=\frac{D_{n+1}^w-K_n^w}{n+1}.
\]
Therefore
\[
	\|K_{n+1}^w-K_n^w\|_1
	\le\frac{\|D_{n+1}^w\|_1+\|K_n^w\|_1}{n+1}.
\]
The Walsh--Paley Lebesgue constants satisfy $\|D_n^w\|_1=O(\log n)$, and
\eqref{eq:Toledo-global} gives the uniform bound for $\|K_n^w\|_1$.  Hence
$\|K_{n+1}^w-K_n^w\|_1\to0$ as $n\to\infty$.  Applying this with $n=a_{m-1}$ gives, as $m\to\infty$,
\[
	\|K_{k_m}^w\|_1\to\frac{17}{15}.
\]
Finally, $k_m/2^m\to2/3$ as $m\to\infty$, and therefore, as $m\to\infty$,
\[
	\frac{k_m\|K_{k_m}^w\|_1}{2^m}\to\frac23\cdot\frac{17}{15}=\frac{34}{45}.
\]
\end{proof}

The next estimate gives the limiting contribution of the principal atoms.

\begin{lemma}\label{L:probe-dominance-alt}
Fix $ v\in\mathbb N$.  Then, for all sufficiently large $m$,
\[
	k_mK_{k_m}^w(e_{m-1- v})\ge Q_{m, v}.
\]
\end{lemma}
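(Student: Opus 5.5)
The plan is to compute $k_mK_{k_m}^w(e_t)$ with $t:=m-1-v$ exactly up to an $O(2^m)$ error, and to compare it with $Q_{m,v}=\frac{4^m}{12\cdot2^v}+O(1)$, where the $O(1)$ depends only on the fixed $v$. The key input is the pointwise value of the Walsh--Paley Dirichlet kernels at the one-bit point $e_t$. I would use the standard representation
\[
D_j^w(x)=w_j(x)\sum_{i}j_ir_i(x)D_{2^i}(x)
\]
(it follows from Lemma~\ref{Paley} and the product structure of $w_j$) together with Lemma~\ref{Paley} at $x=e_t$. There $D_{2^i}(e_t)=2^i$ for $i\le t$ and $0$ for $i>t$, $r_i(e_t)=-1$ exactly for $i=t$, and $w_j(e_t)=(-1)^{j_t}$. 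Writing $\rho:=j \bmod 2^t$, this gives
\[
D_j^w(e_t)=\begin{cases}\rho,& j_t=0,\\ 2^t-\rho,& j_t=1.\end{cases}
\]
So $f(j):=D_j^w(e_t)\ge0$, and $f$ is periodic in $j$ with period $2^{t+1}$. Because $D_0=0$, we have $kK_k^w(e_t)=\sum_{j=1}^{k}D_j^w(e_t)=\sum_{j=0}^{k-1}f(j)$.

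Next I would sum $f$ over one period and over partial periods. A full period gives $\frac{2^t(2^t-1)}2+\frac{2^t(2^t+1)}2=4^t$. For a partial run of length $p=\alpha2^{t+1}$ with $\alpha\in[0,1)$, summing the two arithmetic progressions gives $4^t\varphi(\alpha)+O(2^t)$, where
\[
\varphi(\alpha)=\begin{cases}2\alpha^2, & \alpha\le\tfrac12,\\ \tfrac12+2(\alpha-\tfrac12)-2(\alpha-\tfrac12)^2, & \alpha>\tfrac12.\end{cases}
\]
Hence
\[
k_mK_{k_m}^w(e_t)=4^t\bigl(\lfloor L_m\rfloor+\varphi(\{L_m\})\bigr)+O(2^m),
\qquad L_m:=\frac{k_m}{2^{t+1}}=\frac{2^v k_m}{2^m}.
\]

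Since $k_m=\lceil 2^{m+1}/3\rceil$, we have $L_m\to L:=2^{v+1}/3$, whose fractional part is $1/3$ or $2/3$. The function $\varphi$ is continuous, and I have to make sure $\{L_m\}$ stays away from the jump of the floor function. It does: the exact binary form $k_m=a_{m-1}+1$ from Lemma~\ref{L:alternating-Paley} shows $|L_m-L|=O(2^{v-m})$, so $\lfloor L_m\rfloor=\lfloor L\rfloor$ for large $m$. Now $\varphi(1/3)=2/9$ and $\varphi(2/3)=7/9$, so the bracket tends to $L-\frac19$ or $L+\frac19$. On the other side,
\[
Q_{m,v}=4^v+\frac{2^{2m-v-2}-2^v}{3}=4^t\cdot\frac{2^v}{3}+O_v(1)=4^t\cdot\frac L2+O_v(1).
\]
Since $L\ge 2/3$, we get $L-\frac19-\frac L2\ge\frac29>0$. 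The strict margin $\frac29\,4^t\asymp 4^m$ then dominates the $O(2^m)+O_v(1)$ errors for all sufficiently large $m$.

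The main obstacle is the bookkeeping in the partial-period sum. I have to get $\varphi$ right, especially the branch $\alpha>\frac12$ and the $O(2^t)$ error terms from the arithmetic sums. I also have to check the small cases $v=0,1$, where $\lfloor L\rfloor=0$ or $1$ and everything comes from the partial block: $7/9$ versus $1/3$, and $11/9$ versus $2/3$. If the asymptotic argument looks fragile there, I would fall back on computing $\sum_{j<k_m}f(j)$ exactly from the alternating binary digits of $k_m$.
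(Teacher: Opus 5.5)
Your argument is correct and is essentially the paper's proof. The paper also uses the $2^{t+1}$-periodic formula for $D_q^w(e_t)$, the full-period sum $4^t$, an explicit quadratic partial-period sum, and the fractional parts $\tfrac13,\tfrac23$ of $2^{v+1}/3$; this gives the limit $(3\cdot2^{v+1}+(-1)^v)/9$ for $k_mK_{k_m}^w(e_t)/4^t$, against the limit $2^v/3$ of $Q_{m,v}/4^t$. One harmless slip: since $f(0)=0$, you have $\sum_{j=1}^k f(j)=\sum_{j=0}^{k}f(j)$, not $\sum_{j=0}^{k-1}f(j)$, but the discrepancy $f(k)\le2^t$ is absorbed in your $O(2^m)$ error (and $|L_m-L|<2^{v-m}$ follows directly from the ceiling, without needing the binary form of $k_m$).
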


\begin{proof}
Put $h:=m-1- v$.  For $y=e_h$,
\[
	kK_k^w(e_h)=\sum_{q=1}^{k}D_q^w(e_h).
\]
The values $D_q^w(e_h)$ are periodic in $q$ with period $2^{h+1}$.  If
$q=2^{h+1}a+s$, $0\le s<2^{h+1}$, then
\[
	D_q^w(e_h)=
	\begin{cases}
	s, & 0\le s\le 2^h,\\
	2^{h+1}-s, & 2^h<s<2^{h+1}.
	\end{cases}
\]
Consequently, if $k=2^{h+1}p+\rho$, $0\le\rho<2^{h+1}$, then
\begin{equation}\label{eq:H-period-formula}
	kK_k^w(e_h)=p2^{2h}+A_h(\rho),
\end{equation}
where
\[
A_h(\rho)=
\begin{cases}
\dfrac{\rho(\rho+1)}2, &0\le\rho\le 2^h,\\[3mm]
\dfrac{2^h(2^h+1)}2+(\rho-2^h)2^h-
\dfrac{(\rho-2^h)(\rho-2^h+1)}2, &2^h<\rho<2^{h+1}.
\end{cases}
\]
Because
\[
	\frac{2^{m+1}}3=\frac{2^{ v+1}}3\,2^{h+1},
\]
write
\[
	\frac{2^{ v+1}}3=p_ v+\theta_ v,
	\qquad
	p_ v:=\left\lfloor\frac{2^{ v+1}}3\right\rfloor,
	\qquad
	\theta_ v\in\left\{\frac13,\frac23\right\}.
\]
Then
\[
	k_m=2^{h+1}p_ v+\rho_m,
	\qquad
	\rho_m=\lceil2^{h+1}\theta_ v\rceil.
\]
If $ v$ is even, then $\theta_ v=2/3$ and
$\rho_m/2^h\to4/3$ as $m\to\infty$.  Using
\eqref{eq:H-period-formula}, we obtain, as $m\to\infty$,
\[
	\frac{k_mK_{k_m}^w(e_h)}{2^{2h}}
	\to
	\frac{3\cdot2^{ v+1}+1}{9}.
\]
If $ v$ is odd, then $\theta_ v=1/3$ and
$\rho_m/2^h\to2/3$ as $m\to\infty$.  Again, as $m\to\infty$,
\[
	\frac{k_mK_{k_m}^w(e_h)}{2^{2h}}
	\to
	\frac{3\cdot2^{ v+1}-1}{9}.
\]
Thus in both cases, as $m\to\infty$,
\[
	\frac{k_mK_{k_m}^w(e_{m-1- v})}{2^{2h}}
	\to
	c_ v:=\frac{3\cdot2^{ v+1}+(-1)^ v}{9}.
\]
On the other hand, since $2^{2h}=2^{2m-2-2 v}$, we have, as $m\to\infty$,
\[
	\frac{Q_{m, v}}{2^{2h}}\to\frac{2^ v}{3}.
\]
Finally,
\[
	c_ v-\frac{2^ v}{3}=\frac{3\cdot2^ v+(-1)^ v}{9}>0.
\]
Therefore $k_mK_{k_m}^w(e_{m-1- v})\ge Q_{m, v}$ for all sufficiently large
$m$.
\end{proof}

We compute the limiting principal gain along the alternating sequence.

\begin{lemma}\label{L:principal-gain-alt}
For $k_m$ given by \eqref{eq:def-km}, we have, as $m\to\infty$,
\[
	\frac{\mathcal A_m(k_m)}{2^m}\to\frac7{18}.
\]
\end{lemma}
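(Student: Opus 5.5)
The plan is to split $\mathcal A_m(k_m)/2^m$ from \eqref{eq:def-A} into its $I_m$ part and its one-bit-atom part, find the limit of each, and add them. Dividing \eqref{eq:def-A} by $2^m$ gives
\[
	\frac{\mathcal A_m(k_m)}{2^m}
	=\frac{k_m(k_m+1)}{2\cdot4^m}
	+\sum_{v=0}^{m-1}\frac{\min\bigl(Q_{m,v},|k_mK_{k_m}^w(e_{m-1-v})|\bigr)}{4^m}.
\]

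\emph{The $I_m$ term.} Since $k_m=\lceil 2^{m+1}/3\rceil$, we have $k_m/2^m\to 2/3$. Hence the first term tends to
\[
	\frac12\cdot\frac49=\frac29 .
\]

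\emph{The atom terms, pointwise in $v$.} Fix $v$. From \eqref{eq:Qml-def},
\[
	\frac{Q_{m,v}}{4^m}=4^{v-m}+\frac{2^{-v-2}-2^{v-2m}}{3}\to\frac{2^{-v}}{12}.
\]
Lemma~\ref{L:probe-dominance-alt} shows that for all sufficiently large $m$ the minimum equals $Q_{m,v}$, because $k_mK^w_{k_m}(e_{m-1-v})\ge0$ is at least $Q_{m,v}$. So the $v$-th summand tends to $2^{-v}/12$ for each fixed $v$.

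\emph{Uniform domination (the main obstacle).} To exchange limit and sum, I need a summable majorant that is uniform in $m$. The bound by $Q_{m,v}/4^m$ is useless here: its part $4^{v-m}$, summed over $v$ near $m$, is not small. Instead I bound the other argument of the minimum. Put $h=m-1-v$. The periodic description of $D_q^w(e_h)$ in the proof of Lemma~\ref{L:probe-dominance-alt} gives $0\le D_q^w(e_h)\le 2^h$. Therefore
\[
	0\le k_mK_{k_m}^w(e_h)=\sum_{q=1}^{k_m}D_q^w(e_h)\le k_m2^h\le 2^m2^{m-1-v}.
\]
So each summand is bounded by $2^{-v-1}$, uniformly in $m$. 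This majorant is summable.

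\emph{Conclusion.} By dominated convergence for series (Tannery's theorem), the atom sum tends to
\[
	\sum_{v\ge0}\frac{2^{-v}}{12}=\frac16 .
\]
Adding the two parts,
\[
	\frac29+\frac16=\frac7{18},
\]
which is the claim. The pointwise limits are routine; the only real care needed is the uniform tail control above.
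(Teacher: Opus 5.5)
Your proposal is correct and matches the paper's proof: the same split into the $I_m$ term (limit $2/9$) and the one-bit atom sum, the same pointwise limits $2^{-v}/12$ obtained from Lemma~\ref{L:probe-dominance-alt}, and the same uniform tail bound $2^{-v-1}$ coming from $0\le k_mK_{k_m}^w(e_h)\le k_m2^h$. The only difference is presentational: you cite Tannery's theorem, whereas the paper proves that step directly with a cutoff $N$ and separate $\liminf$/$\limsup$ estimates.
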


\begin{proof}
By definition,
\[
	\frac{\mathcal A_m(k_m)}{2^m}
	=\frac{k_m(k_m+1)}{2\cdot2^{2m}}+
	\frac1{2^{2m}}\sum_{ v=0}^{m-1}
	\min\left(Q_{m, v},|k_mK_{k_m}^w(e_{m-1- v})|\right).
\]
The first term tends to $2/9$.  Denote the second term by $U_m$.
For any fixed $N$, Lemma~\ref{L:probe-dominance-alt} gives, for all sufficiently
large $m$,
\[
	\min\left(Q_{m, v},|k_mK_{k_m}^w(e_{m-1- v})|\right)=Q_{m, v},
	\qquad 0\le v\le N.
\]
Since $Q_{m, v}/2^{2m}\to1/(12\cdot2^ v)$ as $m\to\infty$,
\[
	\liminf_{m\to\infty}U_m
	\ge\sum_{ v=0}^{N}\frac1{12\cdot2^ v}.
\]
Letting $N\to\infty$ gives $\liminf U_m\ge1/6$.

For the reverse inequality, split the sum into $0\le v\le N$ and $ v>N$.
The first part is bounded by the corresponding sum of $Q_{m, v}/2^{2m}$.  For
the tail, use $\min(a,b)\le b$ and
\[
	0\le k_mK_{k_m}^w(e_h)\le k_m2^h\le2^m2^h.
\]
With $h=m-1- v$, this gives
\[
	k_mK_{k_m}^w(e_{m-1- v})\le\frac{2^{2m}}{2^{ v+1}}.
\]
Hence the tail is at most $2^{-N-1}$.  Letting first $m\to\infty$ and then
$N\to\infty$, we get $\limsup U_m\le1/6$.  Therefore $U_m\to1/6$ as $m\to\infty$, and
\[
	\frac{\mathcal A_m(k_m)}{2^m}\to\frac29+\frac16=\frac7{18}
	\qquad(m\to\infty).
\]
\end{proof}

The next elementary estimate controls Walsh--Paley Dirichlet kernels at points with one prescribed non-zero coordinate.

\begin{lemma}\label{L:Dirichlet-one-bit-bound} Let $y\in G$, and suppose that $y_r=1$ for some $r\in\mathbb N$. Then 
\begin{equation}\label{eq} |D_n^w(y)|\le2^r\qquad(n\in\mathbb P).
\end{equation}
\end{lemma}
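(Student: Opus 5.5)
The plan is to reduce $D_n^w(y)$ to a finite Dirichlet kernel with index less than $2^{r+1}$ and then use the closed formula for such kernels. Write $n=\sum_k n_k2^k$. The standard Paley factorization, obtained by splitting the sum over $0\le j<n$ into dyadic blocks, is
\[
	D_n^w(y)=\sum_{k=0}^{\infty}n_k\,D_{2^k}(y)\prod_{l=k+1}^{\infty}r_l^{n_l}(y)
	=w_n(y)\sum_{k=0}^{\infty}n_k r_k(y)D_{2^k}(y).
\]
I will use the first form. By Lemma~\ref{Paley}, $D_{2^k}(y)=0$ whenever $y\notin I_k$. Since $y_r=1$, we have $y\notin I_k$ for every $k\ge r+1$. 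Hence only the terms with $k\le r$ survive.

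For $k\le r$, each term has absolute value $n_kD_{2^k}(y)\le 2^k$, because $D_{2^k}(y)\in\{0,2^k\}$. The crude triangle inequality would therefore give $\sum_{k\le r}2^k=2^{r+1}-1$, which is too weak. I will instead track the signs. Let $t:=\min\{i:y_i=1\}\le r$. Then $D_{2^k}(y)=2^k$ exactly for $k\le t$, and it vanishes for $k>t$. So
\[
	D_n^w(y)=\pm\sum_{k=0}^{t}n_k2^k\prod_{l=k+1}^{\infty}r_l^{n_l}(y).
\]
For $k<t$, the factors $r_l(y)$ with $k<l<t$ equal $1$. The factor $r_t(y)^{n_t}$ equals $(-1)^{n_t}$. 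The factors with $l>t$ give a common sign $\varepsilon$. Thus
\[
	D_n^w(y)=\varepsilon\Bigl(n_t2^t+(-1)^{n_t}\sum_{k<t}n_k2^k\Bigr).
\]
Write $\rho:=\sum_{k<t}n_k2^k\in[0,2^t)$. If $n_t=0$ the bracket equals $\rho<2^t$. If $n_t=1$ it equals $2^t-\rho\le 2^t$. In both cases $|D_n^w(y)|\le 2^t\le 2^r$, which proves \eqref{eq}.

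The main point needing care is the factorization formula together with the sign bookkeeping. I expect the rest to be immediate. If I prefer to avoid quoting the factorization, I can derive it directly from $\{w_j:2^k\le j<2^{k+1}\}=r_k\cdot\{w_j:0\le j<2^k\}$, peeling off the top bit of $n$ inductively.
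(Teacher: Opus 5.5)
Your proof is correct, but it takes a different route from the paper.

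The paper never uses the Paley factorization. It observes that for $0\le a<2^r$,
\[
w_{b2^{r+1}+a+2^r}(y)=w_{b2^{r+1}+a}(y)\,r_r(y)=-w_{b2^{r+1}+a}(y).
\]
So every complete block of $2^{r+1}$ consecutive indices contributes zero, and the last incomplete block leaves at most $2^r$ unmatched terms of modulus one.

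You instead write $D_n^w(y)=\sum_k n_kD_{2^k}(y)\prod_{l>k}r_l^{n_l}(y)$. You then use Lemma~\ref{Paley} to discard all terms with $k>t$, where $t$ is the first index with $y_t=1$, and track the signs. Each step checks out, including the closed form $\varepsilon\bigl(n_t2^t+(-1)^{n_t}\rho\bigr)$. The $\pm$ in your intermediate display is superfluous, since the product already carries all the signs, but it is harmless.

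What each approach buys:
\begin{itemize}
\item The paper's argument is shorter and self-contained. It needs only the multiplicativity $w_{j+2^r}=w_j r_r$ for indices $j$ whose $r$th binary digit is $0$.
\item Yours depends on the factorization formula, but it gives the exact value $|D_n^w(y)|=n_t2^t+(-1)^{n_t}\rho$. This shows the bound $2^t$ is attained, for instance at $n=2^t$.
\end{itemize}

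The sharper bound $2^t$ is not a real gain over the paper: its lemma can simply be applied with $r=t$.
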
 
\begin{proof} Let $b\in\mathbb N$ and $0\le a<2^r$. Then \[ w_{b2^{r+1}+a+2^r}(y) = w_{b2^{r+1}+a}(y)w_{2^r}(y) = -w_{b2^{r+1}+a}(y), \] because $w_{2^r}=r_r$ and $y_r=1$. Hence the terms cancel in pairs in every block of length $2^{r+1}$. Thus every complete block has sum zero. The remaining incomplete block has at most $2^r$ uncancelled terms. Since $|w_j(y)|=1$ for all $j$, we get \eqref{eq}.
\end{proof}

The remaining atoms have negligible contribution on the scale needed below.

\begin{lemma}\label{L:nonprincipal-negligible}
Let $k\in\mathbb P$ and suppose that $k<2^m$.  Then
\[
	0\le\mathcal G_m(k)-\mathcal A_m(k)
	\le Cm2^{2m/3}
\]
with an absolute constant $C>0$.  In particular, since $k_m<2^m$ for $m\ge2$, we have, as $m\to\infty$,
\[
	\frac{\mathcal G_m(k_m)-\mathcal A_m(k_m)}{2^m}\to0.
\]
\end{lemma}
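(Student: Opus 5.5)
Lower bound: by Lemma~\ref{L:principal-gain}, since the integrand defining $\mathcal G_m(k)$ is non-negative, $\mathcal G_m(k)-\mathcal A_m(k)\ge0$ is immediate. For the upper bound, the plan is to write $\mathcal G_m(k)-\mathcal A_m(k)$ as the integral of the same min-integrand over the complement of the principal atoms. By Lemma~\ref{tail}(a), this complement is the disjoint union of the atoms $I_{j+1}(e_s+e_j)$ with $0\le s<j\le m-1$, each of measure $2^{-j-1}$. On such an atom $D_{2^m}=0$, and by \eqref{eq:F-on-Esj} the first argument of the min is the constant
\[
\Bigl|4^s-\tfrac{2^{2j-s-1}+2^s}{3}\Bigr|.
\]
The integrand is therefore at most $\min\bigl(|4^s-\tfrac{2^{2j-s-1}+2^s}{3}|,\,|kK_k^w(\tau_m(x))|\bigr)$, and for each atom we may use whichever of the two bounds is smaller.

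For the second bound, take $x\in I_{j+1}(e_s+e_j)$. Then $x_s=1$, so $\tau_m(x)$ has a $1$ at coordinate $m-1-s$. Lemma~\ref{L:Dirichlet-one-bit-bound} gives $|D_q^w(\tau_m x)|\le 2^{m-1-s}$ for all $q$, hence
\[
|kK_k^w(\tau_m x)|\le k\,2^{m-1-s}\le 2^{2m-1-s}.
\]
For the first bound, note that $|4^s-\tfrac{2^{2j-s-1}+2^s}{3}|\le 4^s+2^{2j-s}$.

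Summing over the atoms then gives
\[
\mathcal G_m(k)-\mathcal A_m(k)\le\sum_{0\le s<j\le m-1}2^{-j-1}\min\bigl(4^s+2^{2j-s},\,2^{2m-1-s}\bigr).
\]
The sum is split into pieces according to which term is relevant. The term $4^s$ contributes
\[
\sum_{s<j}2^{-j-1}\min(4^s,2^{2m-1-s})\le \sum_s 2^{-s}\min(4^s,2^{2m-s}).
\]
This is maximized near $s\approx 2m/3$, where the value is $\approx 2^{2m/3}$, and the geometric decay on both sides of that point gives $O(2^{2m/3})$. The term $2^{2j-s}$ contributes
\[
\sum_{s<j}2^{-j-1}\min(2^{2j-s},2^{2m-1-s})=\sum_{s<j}2^{j-s-1}\min(1,2^{2m-2j-1}).
\]
This is the dangerous piece: it is $\sim 2^{m-s}$ for small $s$, so the crude bound gives $O(2^m)$, which is too large.

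The main obstacle is therefore the region $s$ small, $j$ large. The plan is to improve the bound on $|kK_k^w(\tau_m x)|$ there using the second fixed coordinate. On $I_{j+1}(e_s+e_j)$ we also have $x_j=1$, which after $\tau_m$ sits at coordinate $m-1-j$, a small index. A one-bit estimate at that lower coordinate should give
\[
|kK_k^w(\tau_m x)|\lesssim k\,2^{m-1-j}\le 2^{2m-1-j}
\]
(via Lemma~\ref{L:Dirichlet-one-bit-bound} with $r=m-1-j$, using the lowest set bit). With this, the dangerous piece becomes
\[
\sum_{s<j}2^{-j-1}\min\bigl(4^s+2^{2j-s},\,2^{2m-1-j}\bigr).
\]
Here $\min(2^{2j-s},2^{2m-1-j})2^{-j}$ is at most $2^{(2m-1)/3\cdot\ldots}$, balanced at $j\approx (2m+s)/3$, giving a per-$s$ bound of $O(2^{(2m-s)/3})$, which sums to $O(2^{2m/3})$. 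The factor $m$ in the statement gives slack to absorb a non-geometric sum over one index. So the key step to check is that the lowest set coordinate of $\tau_m x$ yields this stronger Dirichlet bound: Lemma~\ref{L:Dirichlet-one-bit-bound} allows any set bit $r$, and we choose $r=\min(m-1-s,m-1-j)=m-1-j$. The remaining steps are the two geometric-sum estimates, each giving $Cm2^{2m/3}$.

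Finally, since $m2^{2m/3}/2^m\to0$, the stated limit follows.
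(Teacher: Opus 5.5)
Your proposal is correct. Once you use the bit $x_j=1$ (coordinate $m-1-j$ of $\tau_m(x)$) in Lemma~\ref{L:Dirichlet-one-bit-bound}, it is essentially the paper's argument: the same atoms $I_{j+1}(e_s+e_j)$, the same bounds $4^s+2^{2j-s}$ and $2^{2m-1-j}$ for the two arguments of the minimum, and the same geometric-sum balancing. The preliminary attempt via $x_s$ is superfluous, and your final summation (over $j$ first for fixed $s$) even gives $O(2^{2m/3})$, slightly better than the stated $Cm2^{2m/3}$.
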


\begin{proof}
The quantity $\mathcal A_m(k)$ is exactly the contribution of $I_m$ and of
$I_m(e_ v)$, $0\le v\le m-1$, to $\mathcal G_m(k)$.  Hence
$\mathcal G_m(k)-\mathcal A_m(k)$ is the contribution of the points whose first
$m$ coordinates contain at least two $1$'s.

Partition this set according to the first two non-zero coordinates.  For each
pair $0\le s<j\le m-1$, the set $I_{j+1}(e_s+e_j)$ consists exactly of those
$x$ for which
\[
	x_s=x_j=1,
	\qquad
	x_i=0\quad(0\le i\le j,
\ i\ne s,j).
\]
These sets form a pairwise disjoint partition of the set in question, and
$\mu(I_{j+1}(e_s+e_j))=2^{-j-1}$.

On $I_{j+1}(e_s+e_j)$ one has $D_{2^m}=0$.  By \eqref{eq:F-on-Esj},
\[
	|2^mK_{2^m}^{\kappa}(x)|
	\le C(2^{2s}+2^{2j-s}),
	\qquad x\in I_{j+1}(e_s+e_j).
\]
If $x\in I_{j+1}(e_s+e_j)$, then $(\tau_m(x))_{m-1-j}=1$.  By
Lemma~\ref{L:Dirichlet-one-bit-bound}, for every $r\in\mathbb P$,
\[
	|D_r^w(\tau_m(x))|\le2^{m-1-j}.
\]
Thus
\[
	|kK_k^w(\tau_m(x))|
	=\left|\sum_{r=1}^{k}D_r^w(\tau_m(x))\right|
	\le k2^{m-1-j}
	\le2^{2m-1-j}.
\]
It follows that
\[
\begin{aligned}
	\frac{\mathcal G_m(k)-\mathcal A_m(k)}{2^m}
	&\le
	C\sum_{0\le s<j\le m-1}
	\left[
	\min(2^{2s-m-j},2^{m-2j})
	+
	\min(2^{j-m-s},2^{m-2j})
	\right].
\end{aligned}
\]
Both double sums are $O(m2^{-m/3})$.  Indeed, if $j\le2m/3$, use the first
term inside each minimum and sum the resulting geometric series.  If
$j>2m/3$, use the second term.  This gives a contribution bounded by
$Cm2^{-m/3}$.  Therefore
\[
	\frac{\mathcal G_m(k)-\mathcal A_m(k)}{2^m}\le Cm2^{-m/3},
\]
which proves the estimate.
\end{proof}

We can now prove the lower sharpness estimate.

\begin{theorem}\label{T:lower-71-50}
For $k_m$ given by \eqref{eq:def-km},
\[
	\lim_{m\to\infty}\|K_{2^m+k_m}^{\kappa}\|_1=\frac{71}{50}.
\]
Consequently,
\[
	\sup_{n\in\mathbb P}\|K_n^{\kappa}\|_1\ge\frac{71}{50}.
\]
\end{theorem}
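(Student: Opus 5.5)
Set $n_m:=2^m+k_m$. For $m\ge2$ we have $1\le k_m<2^m$, so $|n_m|=m$ and $n_m$ is not a power of two. The plan is to apply the gain formula of Lemma~\ref{L:gain-formula} with $k=k_m$, divide by $2^m$, and pass to the limit term by term, using the results already established.

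Dividing \eqref{eq:gain-formula} by $2^m$ gives
\[
	\frac{n_m}{2^m}\|K_{n_m}^{\kappa}\|_1
	=\|K_{2^m}^{\kappa}\|_1+\frac{k_m}{2^m}+\frac{k_m\|K_{k_m}^w\|_1}{2^m}
	-\frac{\mathcal A_m(k_m)}{2^m}-\frac{\mathcal G_m(k_m)-\mathcal A_m(k_m)}{2^m}.
\]
Each term on the right has a known limit as $m\to\infty$:
\begin{itemize}
\item the first tends to $4/3$ by Theorem~\ref{T:exact_upper_limit};
\item the second tends to $2/3$, since $k_m/2^m\to 2/3$;
\item the third tends to $34/45$ by Lemma~\ref{L:alternating-Paley};
\item the fourth tends to $7/18$ by Lemma~\ref{L:principal-gain-alt};
\item the last tends to $0$ by Lemma~\ref{L:nonprincipal-negligible}.
\end{itemize}

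The right-hand side therefore converges to
\[
	\frac43+\frac23+\frac{34}{45}-\frac7{18}
	=\frac{120+60+68-35}{90}
	=\frac{213}{90}
	=\frac{71}{30}.
\]
On the left, $n_m/2^m=1+k_m/2^m\to 5/3$. Dividing, we get
\[
	\|K_{n_m}^{\kappa}\|_1\to\frac{71}{30}\cdot\frac35=\frac{71}{50}.
\]
The supremum bound is then immediate: the supremum over all $n\in\mathbb P$ is at least any limit of a subsequence of norms.

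Essentially nothing here is an obstacle, since all the analytic work sits in the preceding lemmas. The only points needing care are:
\begin{itemize}
\item checking that $|n_m|=m$, i.e.\ that $k_m<2^m$ for $m\ge2$ (which Lemma~\ref{L:nonprincipal-negligible} already records);
\item confirming that the equality in Lemma~\ref{L:gain-formula} is exact, so that the limits may be combined linearly;
\item the arithmetic, which should be double-checked.
\end{itemize}
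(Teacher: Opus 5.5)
Your proposal is correct and matches the paper's proof: you apply the gain formula of Lemma~\ref{L:gain-formula} with $k=k_m$, normalize, and pass to the limit using $\|K_{2^m}^{\kappa}\|_1\to4/3$, $k_m/2^m\to2/3$, Lemma~\ref{L:alternating-Paley}, and $\mathcal G_m(k_m)/2^m\to7/18$. The paper leaves implicit that this last limit comes from Lemma~\ref{L:principal-gain-alt} together with Lemma~\ref{L:nonprincipal-negligible}, and you make that split explicit; your arithmetic ($213/90=71/30$, then multiplying by $3/5$ to get $71/50$) is also correct.
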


\begin{proof}
For all $m\ge2$ we have $1\le k_m<2^m$, so the gain formula is applicable.  By \eqref{eq:gain-formula},
\[
	\|K_{2^m+k_m}^{\kappa}\|_1
	=
	\frac{2^m\|K_{2^m}^{\kappa}\|_1+k_m+k_m\|K_{k_m}^w\|_1-\mathcal G_m(k_m)}{2^m+k_m}.
\]
Using Corollary~\ref{third} and the preceding lemmas, as $m\to\infty$,
\[
	\|K_{2^m}^{\kappa}\|_1\to\frac43,
	\qquad
	\frac{k_m}{2^m}\to\frac23,
\]
\[
	\frac{k_m\|K_{k_m}^w\|_1}{2^m}\to\frac{34}{45},
	\qquad
	\frac{\mathcal G_m(k_m)}{2^m}\to\frac7{18}.
\]
Therefore
\[
	\lim_{m\to\infty}\|K_{2^m+k_m}^{\kappa}\|_1
	=
	\frac{\frac43+\frac23+\frac{34}{45}-\frac7{18}}{1+\frac23}
	=\frac{71}{50}.
\]
\end{proof}

\subsection{Principal Paley domination}

The first auxiliary estimate concerns the case $m=|k|+1$.

\begin{lemma}\label{L:one-step-P-lower}
For every $k\in\mathbb P$,
\[
	\mathcal A_{|k|+1}(k)
	\ge
	\frac{k^2}{2^{|k|+2}}+\frac{2^{|k|}}3.
\]
\end{lemma}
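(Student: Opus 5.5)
The plan is to bound the two parts of $\mathcal A_m(k)$ in \eqref{eq:def-A} separately, with $m:=|k|+1$, so that $2^{m-1}\le k<2^m$. The first term satisfies $\frac1{2^m}\frac{k(k+1)}2=\frac{k^2}{2^{m+1}}+\frac{k}{2^{m+1}}$, and $\frac{k}{2^{m+1}}\ge\frac14$. So it suffices to show
\[
	\frac1{2^m}\sum_{v=0}^{m-1}\min\left(Q_{m,v},|kK_k^w(e_{m-1-v})|\right)\ge\frac{2^{m-1}}3-\frac14 .
\]

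\emph{Lower bound for the Paley values.} Put $h=m-1-v$ and write $k=2^{h+1}p+\rho$ with $0\le\rho<2^{h+1}$. Formula \eqref{eq:H-period-formula} from the proof of Lemma~\ref{L:probe-dominance-alt} gives $kK_k^w(e_h)=p2^{2h}+A_h(\rho)$. Since $A_h(\rho)$ is a partial sum of the non-negative values $D_q^w(e_h)$, it is non-negative. For $v\ge1$ we have $p\ge\lfloor 2^{m-1}/2^{m-v}\rfloor=2^{v-1}$, hence
\[
	kK_k^w(e_{m-1-v})\ge2^{v-1}2^{2m-2v-2}=2^{2m-v-3}.
\]
For $v=0$ we have $p=0$ and $\rho=k\ge2^h$. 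Monotonicity of $A_h$ then gives $A_h(k)\ge A_h(2^h)=2^h(2^h+1)/2\ge2^{2m-3}$. So the same bound $2^{2m-v-3}$ holds for every $v$.

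\emph{Comparison with $Q_{m,v}$.} By \eqref{eq:Qml-def},
\[
	Q_{m,v}=\frac{2^{2m-v-2}}3+\left(4^v-\frac{2^v}3\right),
\]
and the bracket is non-negative. Since $2^{2m-v-3}=\frac{2^{2m-v-2}}3+\frac{2^{2m-v-2}}6$, both arguments of the minimum are at least $\frac{2^{2m-v-2}}3$. Therefore
\[
	\min\left(Q_{m,v},|kK_k^w(e_{m-1-v})|\right)\ge\frac{2^{2m-v-2}}3 .
\]
Summing the geometric series gives
\[
	\sum_{v=0}^{m-1}\frac{2^{2m-v-2}}3=\frac{2^{2m-1}(1-2^{-m})}3=\frac{4^m}6-\frac{2^m}6 .
\]
Dividing by $2^m$ yields $\frac{2^{m-1}}3-\frac16$, which is at least $\frac{2^{m-1}}3-\frac14$. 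Combining this with the first term gives $\mathcal A_{|k|+1}(k)\ge\frac{k^2}{2^{|k|+2}}+\frac{2^{|k|}}3$.

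The delicate step is the Paley lower bound. The constant has to be exactly $2^{2m-v-3}$, because anything below $\frac{2^{2m-v-2}}3$ would lose a term of order $4^m$. The case $v=0$, where $p=0$ and one must use the second branch of $A_h$ (equivalently, the monotonicity of $A_h$), needs separate care. The remaining bookkeeping is routine: the $-2^m/6$ defect is absorbed by $k/2^{m+1}$. The edge case $k=1$, with $m=1$ and $v=0$, should also be checked directly.
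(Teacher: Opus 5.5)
Your proof is correct and follows essentially the paper's route. You split $\mathcal A_{|k|+1}(k)$ into the $I_{|k|+1}$ atom and the one-bit atoms, bound the Paley values below by $2^{|k|}2^{h-1}$ (your $2^{2m-v-3}$), and compare with $Q_{m,v}$. The only differences are in bookkeeping: the paper gets the Paley bound from monotonicity in $k$ plus the explicit values of $K^w_{2^{|k|}}$ and keeps the extra $2^{|k|-h}/6$ in the common lower bound, whereas you use the period formula and the cruder bound $2^{|k|+h}/3$, and absorb the resulting $-1/6$ with the term $k/2^{|k|+2}\ge 1/4$ that the paper discards.
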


\begin{proof}
The atom $I_{|k|+1}$ contributes
\[
	\frac{k(k+1)}{2\cdot2^{|k|+1}}\ge\frac{k^2}{2^{|k|+2}}.
\]
It remains to show that the contribution of the singleton atoms
$I_{|k|+1}(e_ v)$, $0\le  v\le |k|$, is at least $2^{|k|}/3$.

For $0\le h\le |k|$, the quantities $D_u^w(e_h)$ are non-negative for
$1\le u\le k$.  Hence $kK_k^w(e_h)=\sum_{u=1}^kD_u^w(e_h)$ is increasing in
$k$.  Therefore the singleton contribution is bounded from below by its value
at the left endpoint $k=2^{|k|}$ of the dyadic block.

Put $h:=|k|- v$.  For $k=2^{|k|}$ we have
\[
	2^{|k|}K_{2^{|k|}}^w(e_h)
	=
	\begin{cases}
		2^{|k|}2^{h-1}, & 0\le h\le |k|-1,\\[1mm]
		\dfrac{2^{|k|}(2^{|k|}+1)}2, & h=|k|.
	\end{cases}
\]
Moreover,
\[
	Q_{|k|+1,|k|-h}
	=2^{2|k|-2h}+\frac{2^{|k|+h}-2^{|k|-h}}3.
\]
For $0\le h\le |k|-1$, both
$Q_{|k|+1,|k|-h}$ and $2^{|k|}2^{h-1}$ are not smaller than
\[
	\frac{2^{|k|+h}}3+\frac{2^{|k|-h}}6.
\]
Indeed, the first inequality follows from
\[
	2^{2|k|-2h}-\frac{2^{|k|-h}}3\ge\frac{2^{|k|-h}}6,
\]
and the second one follows from
\[
	2^{|k|}2^{h-1}-\frac{2^{|k|+h}}3
	=\frac{2^{|k|+h}}6
	\ge \frac{2^{|k|-h}}6.
\]
For $h=|k|$ we use
\[
	Q_{|k|+1,0}=\frac{2^{2|k|}}3+\frac23
	\le \frac{2^{|k|}(2^{|k|}+1)}2,
\]
and hence
\[
	\min\left(Q_{|k|+1,0},\frac{2^{|k|}(2^{|k|}+1)}2\right)
	=\frac{2^{2|k|}}3+\frac23.
\]
Consequently, the sum of the singleton terms is at least
\[
\begin{aligned}
&\sum_{h=0}^{|k|-1}
\left(\frac{2^{|k|+h}}3+\frac{2^{|k|-h}}6\right)
+\frac{2^{2|k|}}3+\frac23 \\
&\qquad=
\frac{2^{2|k|+1}}3+\frac13
\ge \frac{2^{2|k|+1}}3.
\end{aligned}
\]
After division by $2^{|k|+1}$, the singleton contribution to
$\mathcal A_{|k|+1}(k)$ is at least $2^{|k|}/3$.  This, together with the
contribution of $I_{|k|+1}$, proves the lemma.
\end{proof}

We first prove the domination estimate in the one-step case.

\begin{proposition}\label{P:one-step-reserve}
For every $k\in\mathbb P$,
\[
	k\|K_k^w\|_1
	\le
	\mathcal A_{|k|+1}(k)
	+\frac{13}{150}2^{|k|+1}
	+\frac{21}{50}k.
\]
\end{proposition}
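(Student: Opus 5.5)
The plan is to combine Toledo's block excess bound, Lemma~\ref{L:Toledo-Paley-estimates}, with the lower bound of Lemma~\ref{L:one-step-P-lower}. Write $m:=|k|$, so that $2^m\le k<2^{m+1}$. By \eqref{eq:Toledo-block-excess},
\[
	k\|K_k^w\|_1\le k+\frac{8}{45}2^{m}.
\]
By Lemma~\ref{L:one-step-P-lower}, it is therefore enough to verify the elementary inequality
\[
	k+\frac{8}{45}2^m\le\frac{k^2}{2^{m+2}}+\frac{2^m}{3}+\frac{13}{75}2^{m}+\frac{21}{50}k ,
\]
where we used $\frac{13}{150}2^{m+1}=\frac{13}{75}2^m$.

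Normalise by setting $t:=k/2^m\in[1,2)$ and divide by $2^m$. The target becomes
\[
	\phi(t):=\frac{t^2}{4}-\frac{29}{50}t+\frac13+\frac{13}{75}-\frac{8}{45}\ge0 .
\]
The constant term is $\frac{150+78-80}{450}=\frac{148}{450}=\frac{74}{225}$. The quadratic $\frac{t^2}{4}-\frac{29}{50}t$ has its minimum at $t=\frac{29}{25}\in[1,2)$, where it equals $-\frac{29^2}{2500}=-\frac{841}{2500}$. So the claim reduces to $\frac{74}{225}\ge\frac{841}{2500}$. Computing, $\frac{74}{225}\approx0.3289$ while $\frac{841}{2500}=0.3364$, so the inequality fails by about $0.0075$. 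The crude use of \eqref{eq:Toledo-block-excess} together with Lemma~\ref{L:one-step-P-lower} is thus slightly too weak near $t\approx 29/25$. This is the main obstacle.

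To overcome it, I would sharpen one of the two ingredients. First, Lemma~\ref{L:one-step-P-lower} discards monotonicity gains: it evaluates the singleton atoms at $k=2^m$, but $kK_k^w(e_h)$ grows with $k$. For $h=m$ in particular, the minimum equals $Q_{m+1,0}$ already, and for small $h$ the value $kK_k^w(e_h)$ exceeds $Q$ once $t$ is moderately above $1$. The gain then becomes $\sum_h\min(Q,\cdot)$ with more terms saturated. I would recompute the singleton contribution as a function of $t$, using the periodic formula \eqref{eq:H-period-formula}. For $h=m-1$ and $k=2^m+\rho$ this gives $kK_k^w(e_{m-1})=2^{2m-2}+A_{m-1}(\rho)$, which surely exceeds $Q_{m+1,1}\approx 2^{2m-1}/3+4$ once $\rho\ge 2^{m}/10$ or so. This adds roughly $\frac{2^{m}}{2}\cdot$(a positive function of $t$), more than the missing $0.0075\cdot2^m$ near $t=29/25$. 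Second, for $t$ very close to $1$, where the monotonicity gain is small, the quadratic margin $\phi(1)=\frac14-\frac{29}{50}+\frac{74}{225}>0$ (about $0.0019$) is positive, so the original argument already works there.

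Finally, the case analysis would be split as follows. For $t\in[1,1+\delta]$, use the crude bound. For $t\ge 1+\delta$, add the improved $h=m-1$ singleton term, whose excess over the Lemma~\ref{L:one-step-P-lower} value is computed explicitly via $A_{m-1}(\rho)$. Small $m$ would be checked directly, since $\frac{8}{45}$ and the $\frac23$-type constants have lower-order corrections.
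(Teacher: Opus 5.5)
Your reduction to $\phi(t)\ge0$ (with your $m=|k|$) is set up correctly, but both the diagnosis and the proposed repair are wrong, so there is a genuine gap. In fact
\[
\phi(t)=\frac14\Bigl(t-\frac43\Bigr)\Bigl(t-\frac{74}{75}\Bigr)=\frac{(3t-4)(75t-74)}{900}.
\]
Hence $\phi(1)=-\frac1{900}<0$; your value $\approx0.0019$ is an arithmetic slip, since $\frac14-\frac{29}{50}=-\frac{33}{100}$ and $\frac{74}{225}<\frac{33}{100}$. In fact $\phi<0$ on the whole interval $[1,\frac43)$, so the block excess bound \eqref{eq:Toledo-block-excess} works only for $t\ge\frac43$.

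Sharpening $\mathcal A_{|k|+1}(k)$ cannot rescue $[1,\frac43)$ for large $m$. By Lemma~\ref{L:Dirichlet-one-bit-bound}, $kK_k^w(e_h)\le k2^h<2^{m+1+h}$. Together with $\min(a+b,c)\le\min(a,c)+b$, this shows that the singleton atoms contribute at most $\frac{2^m}{3}+O(2^{m/3})$. So $\mathcal A_{m+1}(k)=2^m\bigl(\frac{t^2}4+\frac13\bigr)+O(2^{m/3})$ uniformly on the block. Lemma~\ref{L:one-step-P-lower} is therefore asymptotically exact, whereas your deficit $-\phi(t)2^m$ is of order $2^m$.

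The specific gain you hoped for also does not exist. For $m\ge3$,
\[
2^mK_{2^m}^w(e_{m-1})=2^{2m-2}>Q_{m+1,1}=\frac{2^{2m-1}+10}3 ,
\]
and $kK_k^w(e_{m-1})$ is non-decreasing in $k$. So the $h=m-1$ atom is already saturated at $t=1$. Its minimum exceeds the value used in Lemma~\ref{L:one-step-P-lower} by exactly $3$ for every $k$ in the block, which is negligible after division by $2^{m+1}$.

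The missing idea is to improve the upper bound for $k\|K_k^w\|_1$, not the lower bound for $\mathcal A$. Near $t=1$ the block excess bound overshoots badly; at $k=2^m$ the true value is $2^m$. The paper uses Toledo's global bound $k\|K_k^w\|_1\le\frac{17}{15}k$ on $1\le t\le\frac43$. This reduces the claim to
\[
\frac{t^2}4+\frac13+\frac{13}{75}-\frac{107}{150}t=\frac{(3t-4)(25t-38)}{300}\ge0 ,
\]
which holds there since both factors are non-positive. The paper uses \eqref{eq:Toledo-block-excess} only on $\frac43\le t<2$, where your $\phi(t)\ge0$. The split point $t=\frac43$ is exactly where $\frac{17}{15}t=t+\frac8{45}$, i.e.\ where the two Toledo bounds cross. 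No refinement of the atoms and no small-$m$ checks are needed.
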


\begin{proof}
Put $t:=k/2^{|k|}$.  Then $1\le t<2$.  By
Lemma~\ref{L:one-step-P-lower},
\[
	\frac{\mathcal A_{|k|+1}(k)}{2^{|k|}}\ge\frac{t^2}{4}+\frac13.
\]
If $0\le k-2^{|k|}\le2^{|k|}/3$, then $1\le t\le4/3$.  Toledo's global bound gives
$k\|K_k^w\|_1\le(17/15)k$, and hence
\[
\begin{aligned}
&\frac1{2^{|k|}}\left(
\mathcal A_{|k|+1}(k)+\frac{13}{150}2^{|k|+1}
+\frac{21}{50}k-k\|K_k^w\|_1
\right) \\
&\qquad\ge
\frac{t^2}{4}+\frac13+\frac{13}{75}+\frac{21}{50}t-\frac{17}{15}t
=\frac{(3t-4)(25t-38)}{300}\ge0.
\end{aligned}
\]
If $2^{|k|}/3\le k-2^{|k|}<2^{|k|}$, then $4/3\le t<2$.  By
Lemma~\ref{L:blockwise-Toledo-excess},
$k\|K_k^w\|_1\le k+(8/45)2^{|k|}$.  Therefore
\[
\begin{aligned}
&\frac1{2^{|k|}}\left(
\mathcal A_{|k|+1}(k)+\frac{13}{150}2^{|k|+1}
+\frac{21}{50}k-k\|K_k^w\|_1
\right) \\
&\qquad\ge
\frac{t^2}{4}+\frac13+\frac{13}{75}+\frac{21}{50}t-t-\frac8{45}
=\frac{(3t-4)(75t-74)}{900}\ge0.
\end{aligned}
\]
\end{proof}

The next proposition treats the two-step case.

\begin{proposition}\label{P:two-step-close-gap}
For every $k\in\mathbb P$,
\[
	k\|K_k^w\|_1
	\le
	\mathcal A_{|k|+2}(k)
	+\frac{13}{150}2^{|k|+2}
	+\frac{21}{50}k.
\]
\end{proposition}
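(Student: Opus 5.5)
The plan is to copy the proof of Proposition~\ref{P:one-step-reserve}. Put $p:=|k|$, $m:=p+2$ and $t:=k/2^p\in[1,2)$. I first prove a lower bound for $\mathcal A_{p+2}(k)/2^p$ as an explicit function of $t$, the analogue of Lemma~\ref{L:one-step-P-lower}. Then I compare it with the two Toledo bounds of Lemma~\ref{L:Toledo-Paley-estimates}: $k\|K_k^w\|_1\le\frac{17}{15}k$ for $1\le t\le4/3$, and $k\|K_k^w\|_1\le k+\frac8{45}2^p$ for $4/3\le t<2$. The reserve term is now $\frac{13}{150}2^{p+2}=\frac{26}{75}2^p$. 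In each case the claim becomes a polynomial inequality in $t$ on an interval, checked by hand.

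For the lower bound on $\mathcal A_{p+2}(k)$ I go atom by atom. The atom $I_{p+2}$ contributes $\frac{k(k+1)}{2\cdot2^{p+2}}\ge\frac{t^2}{16}2^p$. The singleton atoms are $I_{p+2}(e_v)$ for $0\le v\le p+1$; write $h:=p+1-v$.
\begin{itemize}
\item For $h=p+1$ (so $v=0$): since $k<2^{p+1}$, $kK_k^w$ does not see coordinate $p+1$, so $kK_k^w(e_{p+1})=k(k+1)/2$. Also $Q_{p+2,0}\ge4^{p+1}/3$. The term is therefore at least $4^p\min(4/3,t^2/2)$.
\item For $h=p$: the value is at least $2^{p-1}(2^p+1)$, by monotonicity in $k$ from the left endpoint, as in Lemma~\ref{L:one-step-P-lower}. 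This is below $Q_{p+2,1}\approx\frac23 4^p$.
\item For $0\le h\le p-1$: here I do \emph{not} use only the left-endpoint value $2^{p+h-1}$. The reason is as follows. With the crude bound, the total is
\[
\frac{\mathcal A_{p+2}(k)}{2^p}\ge\frac{t^2}{16}+\frac14+\min\Bigl(\frac13,\frac{t^2}8\Bigr).
\]
Plugged into either Toledo bound, this fails by about $0.02$ near $t=4/3$.
\end{itemize}

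To gain the needed amount, I use the periodic representation \eqref{eq:H-period-formula} for $kK_k^w(e_h)$ with $h<p$. Writing $k=2^{h+1}p'+\rho$ and using $A_h(\rho)\ge\rho\,2^{h-1}-O(2^{2h})$, or more simply that over each full period of length $2^{h+1}$ the triangle wave $D_q^w(e_h)$ has sum $2^{2h}$, I get
\[
kK_k^w(e_h)\ge(k-2^{h+1})2^{h-1}+\frac{2^{2h}}{2}\cdot[\text{boundary term}],
\]
that is, $kK_k^w(e_h)\ge t\,2^{p+h-1}-O(4^h)$. Next I compare with $Q_{p+2,v}=4^{p+1-h}+\frac{2^{p+1+h}-2^{p+1-h}}3\ge\frac23 2^{p+h}$. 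The min is then at least $\min(t,4/3)\,2^{p+h-1}$ up to an error $O(4^h)$. Summed over $h<p$, this error is $O(4^p)$ but with a small constant, which I must track explicitly. The main term yields
\[
\frac{\mathcal A_{p+2}(k)}{2^p}\gtrsim\frac{t^2}{16}+\frac{\min(t,4/3)}4+\frac14\cdot\frac{1}{2}+\min\Bigl(\frac13,\frac{t^2}8\Bigr)\quad\text{(schematically)},
\]
up to the tracked error. Roughly, the $t/4$ gained from the $h<p$ atoms replaces the earlier $1/4$, which closes the gap at $t=4/3$.

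The main obstacle is exactly this step: obtaining a clean, provable lower bound for $\sum_{h<p}\min(Q_{p+2,p+1-h},kK_k^w(e_h))$ that is linear in $t$ with the right constant. The errors coming from the largest $h$ (near $p-1$) must be controlled without destroying the margin. If the uniform estimate is too lossy there, I will treat $h=p-1$ and $h=p-2$ exactly via $A_h(\rho)$ and apply the crude periodic bound only for smaller $h$. Once the bound is in hand, the two cases $1\le t\le4/3$ and $4/3\le t<2$ reduce to checking nonnegativity of explicit quadratics in $t$, as in Proposition~\ref{P:one-step-reserve}.
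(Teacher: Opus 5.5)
Your skeleton is the paper's: bound $\mathcal A_{p+2}(k)$ from below atom by atom, then compare with Toledo's estimates. Your handling of the atoms $h=p+1$ and $h=p$, and of the left-endpoint values for $h<p$, is correct.

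\textbf{The gap.} There is an arithmetic slip in the very first atom, and the rest of the plan is built on it. The atom $I_{p+2}$ contributes $\frac{k(k+1)}{2\cdot2^{p+2}}\ge\frac{k^2}{2^{p+3}}=\frac{t^2}{8}\,2^p$, not $\frac{t^2}{16}\,2^p$. The ``failure by about $0.02$ near $t=4/3$'' is an artifact of this lost factor $2$.

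Worse, with $\frac{t^2}{16}$ the plan cannot be completed. You only checked $t=4/3$. As $t\to2$, your crude bound gives $\frac56+\frac{26}{75}+\frac{21}{25}$ against $2+\frac8{45}$, a shortfall of $\frac{71}{450}\approx0.16$.

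For large $p$, no refinement of the singleton atoms recovers this shortfall:
\begin{itemize}
\item We have $kK_k^w(e_h)\le k2^h<2^{p+1+h}$ and $Q_{p+2,p+1-h}\le4^{p+1-h}+\frac13 2^{p+1+h}$. Hence the atoms with $h<p$ contribute at most $\frac16 2^p+O(2^{p/3})$ in total. That is only $\frac1{24}2^p$ more than your crude $\frac18 2^p$.
\item Refining $h=p$, which is capped by $Q_{p+2,1}\approx\frac23 4^p$, adds at most another $\frac1{24}2^p$.
\end{itemize}
Your schematic term $\frac{\min(t,4/3)}4$ also overstates the $h<p$ gain by a factor $2$, since $\frac1{2^{p+2}}\sum_{h<p}t\,2^{p+h-1}\approx\frac t8\,2^p$. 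Finally, the step you yourself call the main obstacle is never carried out.

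\textbf{The repair.} Correct the coefficient. Your crude left-endpoint bounds then already suffice, and this is exactly the paper's proof. The values $2^{p+h-1}$ (for $h<p$) and $2^{p-1}(2^p+1)$ (for $h=p$) are dominated by the corresponding $Q$'s and add up to exactly $\frac14 2^p$. So
\[
\frac{\mathcal A_{p+2}(k)}{2^p}\ge\frac{t^2}8+\min\Bigl(\frac13,\frac{t^2}8\Bigr)+\frac14 .
\]
Use $k\|K_k^w\|_1\le k+\frac8{45}2^p$ on the whole range, and set
\[
\Delta:=2^{-p}\Bigl(\mathcal A_{p+2}(k)+\frac{26}{75}2^p+\frac{21}{50}k-k\|K_k^w\|_1\Bigr).
\]
\begin{itemize}
\item For $1\le t\le\sqrt{8/3}$: $\Delta\ge\frac{t^2}4-\frac{29}{50}t+\frac14+\frac{38}{225}\ge\frac{464}{5625}$.
\item On $[\sqrt{8/3},2)$: $\Delta\ge\frac{t^2}8-\frac{29}{50}t+\frac7{12}+\frac{38}{225}$, which is decreasing with value $\frac{83}{900}$ at $t=2$.
\end{itemize}
The paper uses the slightly weaker value $\frac12 4^p$ at $h=p$. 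It therefore carries an extra $-\frac1{8\cdot2^{|k|}}$ and needs a separate check of $k=1$. Your exact left-endpoint value avoids both.
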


\begin{proof}
Put $t:=k/2^{|k|}$.  Then $1\le t<2$.

The atom $I_{|k|+2}$ contributes at least $2^{|k|}t^2/8$.  The singleton with
$h=|k|+1$ has
\[
	kK_k^w(e_{|k|+1})=\frac{k(k+1)}2,
	\qquad
	Q_{|k|+2,0}=\frac{4\cdot2^{2|k|}+2}{3},
\]
and contributes at least
\[
	\frac{2^{|k|}}4\min\left(\frac43,\frac{t^2}{2}\right).
\]
For $h=|k|$,
\[
\begin{aligned}
	kK_k^w(e_{|k|})
	&=\frac{2^{|k|}(2^{|k|}+1)}2
	 +(k-2^{|k|})2^{|k|}
	 -\frac{(k-2^{|k|})(k-2^{|k|}+1)}2 \\
	&\ge\frac{2^{2|k|}}2,
\end{aligned}
\]
and
\[
	Q_{|k|+2,1}=4+\frac{2\cdot2^{2|k|}-2}{3}\ge\frac{2^{2|k|}}2.
\]
This gives a contribution of at least $2^{|k|}/8$.  Finally, for $0\le h\le |k|-1$,
\[
	kK_k^w(e_h)\ge2^{|k|}2^{h-1},
\]
and the corresponding $Q$-values dominate these quantities.  Therefore the lower
singleton atoms contribute at least
\[
	\frac1{2^{|k|+2}}\sum_{h=0}^{|k|-1}2^{|k|}2^{h-1}
	=\frac{2^{|k|}}8-\frac18.
\]
Combining these bounds gives
\begin{equation}\label{eq:Pp2-lower-simple}
	\mathcal A_{|k|+2}(k)
	\ge
	2^{|k|}\left[
	\frac{t^2}{8}
	+\frac14\min\left(\frac43,\frac{t^2}{2}\right)
	+\frac14
	\right]-\frac18.
\end{equation}

If $|k|=0$, the claim follows directly from
$\mathcal A_2(1)=3/4$ and $\|K_1^w\|_1=1$.  Assume that $|k|\ge1$.  By
Lemma~\ref{L:blockwise-Toledo-excess},
\[
	k\|K_k^w\|_1\le k+\frac8{45}2^{|k|}.
\]
Using \eqref{eq:Pp2-lower-simple}, we obtain
\[
\begin{aligned}
&\frac1{2^{|k|}}\left(
\mathcal A_{|k|+2}(k)+\frac{13}{150}2^{|k|+2}
+\frac{21}{50}k-k\|K_k^w\|_1
\right) \\
&\qquad\ge
\frac{t^2}{8}
+\frac14\min\left(\frac43,\frac{t^2}{2}\right)
+\frac14
-\frac1{8\cdot2^{|k|}}
-\frac{29}{50}t
+\frac{38}{225}.
\end{aligned}
\]
If $1\le t\le\sqrt{8/3}$, the last expression without the negative term
$1/(8\cdot2^{|k|})$ equals
\[
	\frac{t^2}{4}-\frac{29}{50}t+\frac14+\frac{38}{225},
\]
whose minimum is $464/5625$.  If $\sqrt{8/3}\le t<2$, the corresponding
expression is
\[
	\frac{t^2}{8}-\frac{29}{50}t+\frac7{12}+\frac{38}{225},
\]
which is decreasing on this interval and has value $83/900$ at $t=2$.  Since
\[
	\min\left(\frac{464}{5625},\frac{83}{900}\right)>\frac1{16}
	\ge\frac1{8\cdot2^{|k|}},
\]
the desired inequality follows.
\end{proof}

For larger gaps we use the following lower bound for $\mathcal A_m$.

\begin{lemma}\label{L:Pm-lower-away}
Let $k\in\mathbb P$ and suppose that $m-|k|\ge3$.  Then
\[
	\mathcal A_m(k)
	\ge
	\frac{(m-|k|)k(k+1)}{2\cdot2^m}
	+\frac{2^{2|k|}}{2^m}.
\]
\end{lemma}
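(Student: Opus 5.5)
Write $p:=|k|$ and $d:=m-p\ge3$.  The first term $\frac{k(k+1)}{2\cdot2^m}$ in \eqref{eq:def-A} comes from $I_m$.  The plan is to bound the singleton part of $\mathcal A_m(k)$ from below by keeping only the atoms $I_m(e_v)$ with large index $h=m-1-v$, namely $h\ge p+1$, together with the single atom $h=p$.

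\emph{Top atoms.}  For $h\ge p+1$ we have $k<2^{p+1}\le 2^h$.  By \eqref{eq:H-period-formula} with $p=0$ and $\rho=k\le2^h$, or directly because $D_u^w(e_h)=u$ for $u\le 2^h$, this gives
\[
	kK_k^w(e_h)=\frac{k(k+1)}2 .
\]
These are the indices $v=0,\dots,d-2$, so there are $d-1$ of them.  On each of them we must show
\[
	Q_{m,v}\ge\frac{k(k+1)}2 .
\]
Since $Q_{m,v}\ge 4^v+\frac{2^{2m-v-2}-2^v}{3}$ and $k(k+1)/2<2^{2p+1}=2^{2m-2d+1}$, it suffices to check
\[
	\frac{2^{2m-v-2}-2^v}{3}+4^v\ge 2^{2m-2d+1}
	\qquad\text{for } v\le d-2 .
\]
This holds because $2m-v-2\ge 2m-d$, and $2^{2m-d}/3\ge 2^{2m-2d+1}$ once $d\ge 3$ (the term $-2^v/3$ is absorbed by $4^v$).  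Hence these $d-1$ atoms each contribute exactly $\frac{k(k+1)}{2\cdot2^m}$.  Together with the $I_m$ term this already yields $\frac{d\,k(k+1)}{2\cdot 2^m}$, which is the first summand of the claim.

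\emph{The atom $h=p$.}  Here $v=d-1$.  The computation in the proof of Proposition~\ref{P:two-step-close-gap} gives
\[
	kK_k^w(e_p)\ge \frac{2^{2p}}{2},
\]
since the term $(k-2^p)2^p$ minus the triangular term is nonnegative.  On the other side,
\[
	Q_{m,d-1}\ge \frac{2^{2m-d-1}-2^{d-1}}3+4^{d-1}.
\]
This is at least $2^{2m-d-1}/3\ge 2^{2p+d-1}/3\ge \frac{4}{3}\,2^{2p}\ge 2^{2p}$ when $d\ge3$.  So the minimum is at least $2^{2p}/2$.  This contributes only $\frac{2^{2p}}{2\cdot 2^m}$, half of the needed second summand $\frac{2^{2p}}{2^m}$.

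\emph{Remaining atoms $h<p$.}  These supply the missing $\frac{2^{2p}}{2\cdot2^m}$.  As in Proposition~\ref{P:two-step-close-gap}, monotonicity in $k$ (the $D_u^w(e_h)$ are nonnegative) and Lemma~\ref{Kw2i_atoms} give
\[
	kK_k^w(e_h)\ge 2^p2^{h-1}\qquad(0\le h\le p-1).
\]
The corresponding $Q_{m,m-1-h}\ge 2^{2(m-1-h)}$ is huge compared to this, since $m-1-h\ge p$.  Summing,
\[
	\sum_{h=0}^{p-1}2^{p+h-1}=\frac{2^{2p}-2^p}{2}.
\]
Adding the $h=p$ contribution gives $2^{2p}-2^{p-1}$, which is just short of $2^{2p}$.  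To close this gap, I would sharpen the $h=p$ estimate instead of using $2^{2p}/2$.  For $k=2^p$ we have exactly
\[
	kK_k^w(e_p)=\frac{2^p(2^p+1)}2=\frac{2^{2p}}{2}+2^{p-1},
\]
and the monotonicity in $k$ keeps this bound for every $k$ in the block.  Since $Q_{m,d-1}\ge 2^{2p}$ exceeds it, the $h=p$ atom contributes at least $2^{2p}/2+2^{p-1}$.  Then the singleton total is at least $2^{2p}$, which gives exactly the second summand $\frac{2^{2p}}{2^m}$.

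The main obstacle is this bookkeeping: verifying that the $Q$ values dominate in every case, uniformly for small $p$ (including $p=0$, where the sum over $h<p$ is empty and the $h=p$ term alone gives $1$).  This is the only place where the hypothesis $d\ge3$ is used.
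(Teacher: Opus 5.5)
Your proposal is correct and, after your sharpening step, it is the paper's argument: $I_m$ together with the $m-|k|-1$ top singletons ($h\ge|k|+1$, where $kK_k^w(e_h)=k(k+1)/2$) gives the first summand, and the atoms $h\le|k|-1$ (bound $2^{|k|}2^{h-1}$) plus the atom $h=|k|$ (bound $2^{|k|}(2^{|k|}+1)/2$, via monotonicity in $k$) give $2^{2|k|}/2^m$. One small repair is needed: for $h\le|k|-1$ your justification ``$m-1-h\ge|k|$'' fails when $|k|>m-|k|$, but the domination still holds because $Q_{m,m-1-h}\ge 2^{m+h-1}/3\ge 2^{|k|+h-1}$, using $2^{m-|k|}\ge3$.
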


\begin{proof}
The atom $I_m$ contributes $k(k+1)/(2\cdot2^m)$.  Write $h=m-1- v$.  If
$h\ge |k|+1$, equivalently $ v\le m-|k|-2$, then $k<2^{|k|+1}\le2^h$, so
\[
	kK_k^w(e_h)=\frac{k(k+1)}2.
\]
The corresponding $Q$-values dominate this number.  Thus these $m-|k|-1$
singleton atoms, together with $I_m$, give
\[
	\frac{(m-|k|)k(k+1)}{2\cdot2^m}.
\]
For $0\le h\le |k|-1$,
\[
	kK_k^w(e_h)\ge2^{|k|}K_{2^{|k|}}^w(e_h)=2^{|k|}2^{h-1},
\]
and the corresponding $Q$-values dominate these quantities.  Their total
contribution is at least
\[
	\frac1{2^m}\sum_{h=0}^{|k|-1}2^{|k|}2^{h-1}
	=\frac{2^{2|k|}-2^{|k|}}{2\cdot2^m}.
\]
For $h=|k|$,
\[
	kK_k^w(e_{|k|})
	\ge2^{|k|}K_{2^{|k|}}^w(e_{|k|})
	=\frac{2^{|k|}(2^{|k|}+1)}2,
\]
and the corresponding atom contributes at least
$2^{|k|}(2^{|k|}+1)/(2\cdot2^m)$.  The last two contributions add up to
$2^{2|k|}/2^m$.
\end{proof}

The following proposition covers the case where the gap is at least three.

\begin{proposition}\label{P:gap-at-least-three}
Let $k\in\mathbb P$ and suppose that $m-|k|\ge3$.  Then
\[
	k\|K_k^w\|_1
	\le
	\mathcal A_m(k)+\frac{13}{150}2^m+\frac{21}{50}k.
\]
\end{proposition}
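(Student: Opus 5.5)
The plan is to combine Lemma~\ref{L:Pm-lower-away} with Toledo's bound. Put $g:=m-|k|\ge3$ and $t:=k/2^{|k|}\in[1,2)$. By Lemma~\ref{L:Pm-lower-away},
\[
\mathcal A_m(k)\ge\frac{g\,k^2}{2\cdot2^m}+\frac{2^{2|k|}}{2^m}
=2^{|k|}\left(\frac{g\,t^2}{2^{g+1}}+\frac1{2^g}\right).
\]
The target inequality then follows once we show
\[
k\|K_k^w\|_1-\frac{21}{50}k\le\frac{13}{150}2^{|k|+g}+2^{|k|}\left(\frac{g t^2}{2^{g+1}}+\frac1{2^g}\right).
\]
For the left-hand side I would use only Toledo's global bound \eqref{eq:Toledo-global}, which gives $k\|K_k^w\|_1\le\frac{17}{15}k$. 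Then the left side is at most $(\frac{17}{15}-\frac{21}{50})k=\frac{107}{150}\,t\,2^{|k|}$. If that is too weak, the fallback is the block excess bound \eqref{eq:Toledo-block-excess}, which gives $k+\frac{8}{45}2^{|k|}$; this yields $\frac{29}{50}t+\frac8{45}$.

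After dividing by $2^{|k|}$, it suffices to prove
\[
\frac{107}{150}\,t\le\frac{13}{150}2^{g}+\frac{g t^2}{2^{g+1}}+2^{-g},\qquad 1\le t<2,\ g\ge3.
\]
Since $t<2$, the left side is less than $\frac{214}{150}$. At $g=3$ the right side equals $\frac{104}{150}+\frac{3t^2}{16}+\frac18$. At $t=2$ this is $0.693+0.75+0.125=1.568$, against a left side of $1.427$. At $t=1$ it is $0.693+0.1875+0.125=1.006$, against $0.713$.

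So for $g=3$ I would check the quadratic in $t$ directly: the quantity $\frac{3t^2}{16}-\frac{107}{150}t+\frac{104}{150}+\frac18$ has discriminant $0.509-4\cdot0.1875\cdot0.818<0$, so it is positive. For $g\ge4$ the term $\frac{13}{150}2^g\ge\frac{208}{150}$ already nearly exceeds $\frac{214}{150}$. A cleaner route is to drop the quadratic term and use $\frac{13}{150}2^g+2^{-g}$. This is increasing in $g$ for $g\ge3$, and at $g=4$ it equals $1.387+0.0625=1.449>1.427$. The case $g=3$ was handled by the discriminant argument. I expect no real obstacle; the only care needed is the $g=3$ case, where the $t^2$ gain from Lemma~\ref{L:Pm-lower-away} is actually needed, and the arithmetic must be verified exactly with rational numbers rather than decimals.
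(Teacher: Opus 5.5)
Your proof is correct. It differs from the paper's in the Walsh--Paley input. The paper combines Lemma~\ref{L:Pm-lower-away} with the blockwise excess bound $k\|K_k^w\|_1\le k+\frac{8}{45}2^{|k|}$ from Lemma~\ref{L:blockwise-Toledo-excess}. This gives the expression $\frac{dt^2}{2^{d+1}}+2^{-d}+\frac{13}{150}2^d-\frac{29}{50}t-\frac8{45}$, which the paper minimizes for $d=3$ (minimum $8641/45000$). For $d\ge4$ it uses monotonicity in $t$ and evaluates at $t=2$.

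You use only Toledo's global bound $k\|K_k^w\|_1\le\frac{17}{15}k$. This is weaker than the blockwise bound exactly when $t>4/3$, so your margins are thinner, but they are still positive. At $g=3$ the exact discriminant is $\bigl(\frac{107}{150}\bigr)^2-\frac34\cdot\frac{491}{600}=-\frac{18883}{180000}<0$. At $g=4$, after discarding the $t^2$ term, the slack is $\frac{208}{150}+\frac1{16}-\frac{214}{150}=\frac{9}{400}>0$. So the fallback you mention is never needed.

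Your route makes the gap-$\ge3$ case rest only on Toledo's published global supremum. It does not need the blockwise excess estimate, whose derivation in the paper depends on where the blockwise maxima of $n\|K_n^w\|_1-n$ are attained.

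The paper's route gives larger margins and uses the same input in all three propositions. The blockwise estimate cannot be dropped from the theorem as a whole. In Proposition~\ref{P:one-step-reserve}, the global bound combined with Lemma~\ref{L:one-step-P-lower} gives only $(3t-4)(25t-38)/300$, which is negative for $4/3<t<38/25$.
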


\begin{proof}
Put $d:=m-|k|$ and $t:=k/2^{|k|}$.  Then $d\ge3$ and $1\le t<2$.  By
Lemma~\ref{L:blockwise-Toledo-excess},
\[
	k\|K_k^w\|_1\le k+\frac8{45}2^{|k|}.
\]
By Lemma~\ref{L:Pm-lower-away},
\[
	\mathcal A_m(k)
	\ge\frac{dk(k+1)}{2\cdot2^m}+\frac{2^{2|k|}}{2^m}.
\]
Therefore
\[
\begin{aligned}
&\frac1{2^{|k|}}\left(
\mathcal A_m(k)+\frac{13}{150}2^m+\frac{21}{50}k-k\|K_k^w\|_1
\right) \\
&\qquad\ge
\frac{dt^2}{2^{d+1}}+\frac1{2^d}
+\frac{13}{150}2^d-\frac{29}{50}t-\frac8{45}.
\end{aligned}
\]
For $d=3$, the right-hand side is
\[
	\frac{3t^2}{16}+\frac18+\frac{52}{75}-\frac{29}{50}t-\frac8{45},
\]
whose minimum on $[1,2]$ is $8641/45000>0$.  If $d\ge4$, the expression is
decreasing in $t\in[1,2)$, and its value at $t=2$ is
\[
	\frac{2d+1}{2^d}+\frac{13}{150}2^d-\frac{29}{25}-\frac8{45}.
\]
This equals $2201/3600>0$ at $d=4$ and is increasing for $d\ge4$.  Hence the
inequality holds for all $d\ge3$.
\end{proof}

The three previous propositions imply the principal Paley domination estimate.

\begin{theorem}\label{T:principal-Paley-domination-final}
For every $m\in\mathbb N$ and every $k\in\mathbb P$ with $k<2^m$,
\[
	k\|K_k^w\|_1
	\le
	\mathcal A_m(k)+\frac{13}{150}2^m+\frac{21}{50}k.
\]
\end{theorem}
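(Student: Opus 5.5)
The theorem is a case analysis on the gap $d:=m-|k|$, with all the work already done in the three preceding propositions. Since $k\in\mathbb P$ and $k<2^m$, we have $2^{|k|}\le k<2^m$, hence $|k|<m$ and $d\ge1$. The cases are exhaustive.

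\emph{Case $d=1$}, that is, $m=|k|+1$. The inequality is literally Proposition~\ref{P:one-step-reserve}, since then $\frac{13}{150}2^{|k|+1}=\frac{13}{150}2^m$ and $\mathcal A_{|k|+1}(k)=\mathcal A_m(k)$.

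\emph{Case $d=2$}, that is, $m=|k|+2$. This is exactly Proposition~\ref{P:two-step-close-gap}, with $2^{|k|+2}=2^m$.

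\emph{Case $d\ge3$}. Here Proposition~\ref{P:gap-at-least-three} applies directly with the given $m$.

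The only point to check is that each proposition is stated for arbitrary $k\in\mathbb P$ with the appropriate $m$. Each one is, and in particular the edge case $|k|=0$ (so $k=1$) is handled inside Proposition~\ref{P:two-step-close-gap}, and by Proposition~\ref{P:one-step-reserve} when $m=1$. There is no real obstacle; the substantive work lies in the three propositions, so the proof is a three-line case distinction.
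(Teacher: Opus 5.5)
Your proposal is correct and matches the paper's proof: $2^{|k|}\le k<2^m$ forces $m-|k|\ge1$, and the three exhaustive cases $m=|k|+1$, $m=|k|+2$ and $m-|k|\ge3$ are exactly Propositions~\ref{P:one-step-reserve}, \ref{P:two-step-close-gap} and \ref{P:gap-at-least-three}.
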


\begin{proof}
Since $k<2^m$, we have $|k|\le m-1$.  If $m=|k|+1$, use
Proposition~\ref{P:one-step-reserve}.  If $m=|k|+2$, use
Proposition~\ref{P:two-step-close-gap}.  If $m-|k|\ge3$, use
Proposition~\ref{P:gap-at-least-three}.  These cases cover all possibilities.
\end{proof}

\subsection{The exact global supremum}

We now prove the sharp global upper estimate.

\begin{theorem}\label{T:sharp-upper-71-50}
For every $n\in\mathbb P$,
\[
	\|K_n^{\kappa}\|_1\le\frac{71}{50}.
\]
\end{theorem}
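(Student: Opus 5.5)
The plan is to split $n$ into powers of two and non-powers of two, and to combine the gain formula of Lemma~\ref{L:gain-formula} with the principal Paley domination of Theorem~\ref{T:principal-Paley-domination-final}.

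\emph{Powers of two.} If $n=2^m$, Theorem~\ref{T:exact_upper_limit} and Corollary~\ref{third} give $\|K_{2^m}^{\kappa}\|_1<4/3<71/50$. Nothing more is needed in this case.

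\emph{General indices.} Let $n$ not be a power of two. Write $m:=|n|$ and $k:=n-2^m$, so that $1\le k<2^m$. By Lemma~\ref{L:gain-formula},
\[
n\|K_n^{\kappa}\|_1=2^m\|K_{2^m}^{\kappa}\|_1+k+k\|K_k^w\|_1-\mathcal G_m(k).
\]
Lemma~\ref{L:principal-gain} gives $\mathcal G_m(k)\ge\mathcal A_m(k)$, hence $-\mathcal G_m(k)\le-\mathcal A_m(k)$. Theorem~\ref{T:principal-Paley-domination-final} then gives
\[
k\|K_k^w\|_1-\mathcal A_m(k)\le\frac{13}{150}2^m+\frac{21}{50}k.
\]
For $m\ge2$, the dyadic bound $\|K_{2^m}^{\kappa}\|_1<4/3$ applies. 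Combining these three estimates,
\[
n\|K_n^{\kappa}\|_1\le 2^m\Bigl(\frac43+\frac{13}{150}\Bigr)+k\Bigl(1+\frac{21}{50}\Bigr)
=\frac{71}{50}2^m+\frac{71}{50}k=\frac{71}{50}\,n,
\]
since $\frac43+\frac{13}{150}=\frac{200+13}{150}=\frac{71}{50}$. Dividing by $n$ gives the claim.

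\emph{Small cases.} For $m\le1$ the dyadic norm is $\|K_{2^m}^{\kappa}\|_1=1\le 4/3$, so the same chain of inequalities goes through unchanged. This covers $n=3$, which is the only non-power of two with $m\le1$. Note that the $m=0$ instance of Theorem~\ref{T:principal-Paley-domination-final} is vacuous, but it is never needed, because every non-power of two has $m\ge1$.

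\emph{Main obstacle.} All the difficulty has been pushed into Theorem~\ref{T:principal-Paley-domination-final}, whose constants $13/150$ and $21/50$ were chosen precisely so that the two coefficients collapse to $71/50$. The one point to check carefully is that the dyadic bound $4/3$ holds for every $m$, not only asymptotically. This follows from the strict inequality in Theorem~\ref{T:exact_upper_limit} for $m\ge2$, together with the values $\|K_{2^m}^{\kappa}\|_1=1$ for $m=0,1$. Sharpness of the constant is already supplied by Theorem~\ref{T:lower-71-50}.
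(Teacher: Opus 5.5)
Your proposal is correct and follows the paper's proof essentially step for step. You treat the power-of-two case via the dyadic bound $4/3$, and otherwise combine the gain formula, $\mathcal G_{|n|}(k)\ge\mathcal A_{|n|}(k)$, the principal Paley domination estimate, and $\frac43+\frac{13}{150}=1+\frac{21}{50}=\frac{71}{50}$; your separate handling of $m\le1$ is just a more explicit form of the paper's appeal to Corollary~\ref{third}.
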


\begin{proof}
If $n=2^{|n|}$, then Theorem~\ref{T:exact_upper_limit} and Corollary~\ref{third}
give
\[
	\|K_n^{\kappa}\|_1=\|K_{2^{|n|}}^{\kappa}\|_1\le\frac43<\frac{71}{50}.
\]
Assume now that $n$ is not a power of two, and put
\[
	k:=n-2^{|n|}.
\]
Then $1\le k<2^{|n|}$.  By the gain formula,
\[
	n\|K_n^{\kappa}\|_1
	=
	2^{|n|}\|K_{2^{|n|}}^{\kappa}\|_1
	+k+k\|K_k^w\|_1-
	\mathcal G_{|n|}(k).
\]
Using $\|K_{2^{|n|}}^{\kappa}\|_1\le4/3$, Lemma~\ref{L:principal-gain}, and
Theorem~\ref{T:principal-Paley-domination-final}, we obtain
\[
\begin{aligned}
	n\|K_n^{\kappa}\|_1
	&\le
	\frac43 2^{|n|}+k+k\|K_k^w\|_1-
	\mathcal A_{|n|}(k) \\
	&\le
	\frac43 2^{|n|}+k+\frac{13}{150}2^{|n|}+\frac{21}{50}k \\
	&=\frac{71}{50}(2^{|n|}+k)=\frac{71}{50}n.
\end{aligned}
\]
Therefore $\|K_n^{\kappa}\|_1\le71/50$.
\end{proof}

The exact value of the global supremum follows immediately.

\begin{theorem}\label{T:exact-supremum-71-50}
We have
\[
	\sup_{n\in\mathbb P}\|K_n^{\kappa}\|_1=\frac{71}{50}.
\]
\end{theorem}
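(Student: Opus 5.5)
The plan is to combine the two inequalities that are already in place. Theorem~\ref{T:sharp-upper-71-50} gives $\|K_n^{\kappa}\|_1\le 71/50$ for every $n\in\mathbb P$, so $\sup_{n\in\mathbb P}\|K_n^{\kappa}\|_1\le 71/50$. Theorem~\ref{T:lower-71-50} gives
\[
\lim_{m\to\infty}\|K_{2^m+k_m}^{\kappa}\|_1=\frac{71}{50},
\]
where $k_m=\lceil 2^{m+1}/3\rceil$. Every index $2^m+k_m$ lies in $\mathbb P$. The supremum therefore dominates every term of this sequence, and hence its limit, which gives the reverse inequality $\sup_{n\in\mathbb P}\|K_n^{\kappa}\|_1\ge 71/50$. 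The two bounds together give the equality.

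Nothing further is needed. One point is worth recording: the supremum may not be attained, since the values along $2^m+k_m$ could stay strictly below $71/50$. This does not matter for the statement, because it is about the supremum and not a maximum.

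The real difficulty sits in the upper bound. That bound relied on the principal Paley domination estimate, Theorem~\ref{T:principal-Paley-domination-final}, together with the exact gain formula and the estimate $\|K_{2^m}^{\kappa}\|_1\le 4/3$. The present statement is a two-line corollary of those results, and I expect no obstacle in it.
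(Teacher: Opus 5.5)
Your proposal is correct and is exactly the paper's argument: the upper bound comes from Theorem~\ref{T:sharp-upper-71-50}, and the lower bound comes from the limit along $2^m+k_m$ in Theorem~\ref{T:lower-71-50}. Your hedge about attainment can be settled. Since $\|K_{2^{|n|}}^{\kappa}\|_1<4/3$ strictly for every $|n|$, the first inequality in the proof of Theorem~\ref{T:sharp-upper-71-50} is strict, so $\|K_n^{\kappa}\|_1<71/50$ for all $n$ and the supremum is never attained.
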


\begin{proof}
The upper bound follows from Theorem~\ref{T:sharp-upper-71-50}.  The opposite
inequality follows from Theorem~\ref{T:lower-71-50}.  Hence the supremum is
$71/50$.
\end{proof}

\end{document}